\newtheorem{thm}{Theorem}
\newtheorem{lem}{Lemma}
\newtheorem{cor}{Corollary}
\newtheorem{remark}{Remark}
\newtheorem{defn}{Definition}
\newtheorem{example}{Example}
\theoremstyle{definition}
\author[ M. Saini]{ Manisha Saini}
\thanks {The research work of the author is supported by research fellowship from University Grants Commission (UGC), New Delhi.}
\title[Order and Hyper-Order]{Order and Hyper-order of Solutions of Second Order Linear Differential Equations}
\begin{document}
\maketitle

\begin{abstract}
\noindent We have discussed the problem of finding the condition on coefficients of $f''+A(z)f'+B(z)f=0, \quad B(z)(\not \equiv 0)$ so that all non-trivial solutions are of infinite order. The hyper-order of these non-trivial solutions of infinite order is also found when $\lambda(A)<\rho(A)$ and $B(z)$ is a transcendental entire function satisfying some conditions.\\
\textbf{Keywords:} Entire function, meromorphic function, order of growth, infinite order, complex differential equation.\\
\textbf{2010 Mathematics Subject Classification:} 34M10, 30D35.\\
 \textbf{Address:} Department of Mathematics, University of Delhi, New Delhi-110007, India.\\
\textbf{Email:} msaini@maths.du.ac.in, sainimanisha210@gmail.com

\end{abstract}
\section{Introduction}
The study of growth of solutions of complex differential equation starts with Wittich's work in Wittich \cite{witt}. For the fundamental results of complex differential equations we have consulted Hille \cite{helle} and Laine \cite{lainebook}. The Nevanlinna's value distribution theory plays a crucial role  in investigation of complex differential equations.  For the notion of value distribution theory we have consulted the standard reference Yang \cite{yang}.

For an entire function $f(z)$ the order of growth is defined as:
$$ \rho(f) =  \limsup_{r \rightarrow \infty} \frac{\log^+ \log^+ M(r, f)}{\log r}=   \limsup_{r \rightarrow \infty} \frac{\log^+T(r, f)}{\log r}$$
where $M(r,f)=\max \{\ |f(z)|: |z|=r \}\ $ is the maximum modulus of the function $f(z)$ over the circle $|z|=r$ and $T(r,f)$ is the Nevanlinna characteristic function of the function $f(z)$.

In this paper we investigate the growth of solutions $f (\not \equiv 0)$ of  the second order linear differential equation 
\begin{equation}\label{sde}
f''+A(z)f'+B(z)f=0
\end{equation}
where the coefficients $A(z)$ and $B(z)(\not \equiv 0)$ are entire functions. 
It is known that all solutions of the equation (\ref{sde}) are entire functions Laine  \cite{lainebook}. The necessary and sufficient condition that all solutions of the equation (\ref{sde}) are of finite order is that the coefficients $A(z)$ and $B(z)$ are polynomials Laine \cite{lainebook}. It is easy to conclude that if any of the coefficients is a transcendental entire function then almost all solutions are of infinite order. However, there is a necessary condition for equation (\ref{sde}) to have a solution of finite order:

\begin{thm}\cite{finitegg}\label{finitgg}
Suppose that $f(z)$ be a finite order solution of the equation (\ref{sde}) then $T(r,B)\leq T(r,A)+O(1)$.
\end{thm}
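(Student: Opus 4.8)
The plan is to use the fact that $f$ itself satisfies (\ref{sde}) in order to express $B$ directly in terms of $A$ and the logarithmic derivatives of $f$, and then to argue that finite order makes those logarithmic derivatives negligible. Since $f(\not\equiv 0)$ is a solution, dividing (\ref{sde}) by $f$ gives the identity
$$ B(z) = -\frac{f''(z)}{f(z)} - A(z)\,\frac{f'(z)}{f(z)}. $$
The governing idea is that, because $f$ has finite order, the quotients $f'/f$ and $f''/f$ contribute only a lower-order term to the proximity function, so that the growth of $B$ is essentially dictated by that of $A$.

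First I would record that $A$ and $B$ are entire, so that $T(r,A)=m(r,A)$ and $T(r,B)=m(r,B)$; this lets me work entirely with proximity functions and avoid any counting-function contribution. Applying the standard subadditivity of $m(r,\cdot)$ under sums and products to the identity above yields
$$ T(r,B) = m(r,B) \le m(r,A) + m\!\left(r,\frac{f'}{f}\right) + m\!\left(r,\frac{f''}{f}\right) + \log 2. $$
The crux of the argument is then to estimate the two logarithmic-derivative terms. Here I would invoke the lemma on the logarithmic derivative from Nevanlinna theory \cite{yang}: for a meromorphic function of finite order $\rho(f)<\infty$ one has $m(r,f^{(k)}/f)=O(\log r)$ as $r\to\infty$, precisely because $\log T(r,f)=O(\log r)$ in the finite-order case. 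Applying this with $k=1$ and $k=2$ renders both error terms $O(\log r)$, and combining the estimates gives $T(r,B)\le T(r,A)+O(\log r)$, the finiteness of $\rho(f)$ being exactly what collapses the generic error term of the logarithmic derivative lemma to this lower-order quantity (which, for the transcendental coefficients of interest, is dominated by $T(r,A)$).

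The step I expect to be the main obstacle is the control of the exceptional set in the logarithmic derivative estimate. For a general meromorphic function the bound on $m(r,f'/f)$ is only guaranteed outside a set of $r$ of finite linear measure, so to obtain a comparison valid for \emph{all} large $r$ I would need to lean on the finite-order hypothesis (or a standard density argument) to either remove that exceptional set or pass to it harmlessly; this is the delicate point that distinguishes the finite-order conclusion from the weaker estimate available with an exceptional set. Once this is handled, the comparison $T(r,B)\le T(r,A)+O(1)$ follows from the identity for $B$ together with the negligibility of the logarithmic derivatives.
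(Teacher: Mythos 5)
A preliminary remark on the comparison itself: the paper gives no proof of this statement --- it is quoted from Gundersen \cite{finitegg} --- so there is no internal argument to measure yours against. Your route is the standard proof of this result: divide (\ref{sde}) by $f$ to get $B=-f''/f-A\,f'/f$, use $T(r,\cdot)=m(r,\cdot)$ for entire functions, apply subadditivity of the proximity function, and then estimate $m\bigl(r,f^{(k)}/f\bigr)$ via the lemma on the logarithmic derivative in its finite-order form. Your worry about the exceptional set is also resolved exactly as you anticipate: one invokes the two-radii version of the logarithmic derivative lemma (say with $R=2r$), which carries no exceptional set, and for $\rho(f)<\infty$ the bound $\log^+ T(2r,f)=O(\log r)$ turns it into $m\bigl(r,f^{(k)}/f\bigr)=O(\log r)$ for \emph{all} large $r$, with $k=2$ handled by writing $m(r,f''/f)\le m(r,f''/f')+m(r,f'/f)$ and noting $\rho(f')=\rho(f)$.

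The one genuine defect is your final sentence. What your argument proves is $T(r,B)\le T(r,A)+O(\log r)$, and the passage from this to the asserted $T(r,B)\le T(r,A)+O(1)$ is not a harmless tightening: it is impossible, because the $O(1)$ form is false as stated. Take $A\equiv 0$ and $B(z)=-z$, so that (\ref{sde}) becomes Airy's equation $f''-zf=0$; its nontrivial solutions have order $3/2<\infty$, yet $T(r,B)=\log r$ while $T(r,A)+O(1)=O(1)$. (Polynomial coefficients always produce only finite-order solutions, so one gets a counterexample whenever $\deg B>\deg A$.) The error term in Gundersen's theorem is $O(\log r)$, i.e.\ the inequality you actually derived is the correct statement, and it is also all that the paper ever uses: the deduction $\rho(B)\le\rho(A)$ in the presence of a nontrivial finite-order solution follows just as well from the $O(\log r)$ bound. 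So your proof is the right proof of the correct version of the theorem; what you should not do is claim at the end that the $O(1)$ form ``follows,'' since no argument can close that gap --- the discrepancy lies in the paper's (mis)quotation of the theorem, not in your estimates.
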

This implies that if equation (\ref{sde}) possess a solution of finite order then $\rho(B)\leq \rho(A)$. Therefore, if $\rho(A)<\rho(B)$ then all non-trivial solutions $f(z)$ of the equation (\ref{sde}) are of infinite order. It is well known that above condition is not sufficient, for example: $f''+e^{-z}f'-n^2f=0$ has all non-trivial solutions of infinite order. \\
Therefore, it is interesting to find conditions on $A(z)$ and $B(z)$ so that all solutions $f(\not \equiv 0)$ are of infinite order. Many results have been given in this context. Gundersen \cite{finitegg} and Hellerstein et. al. \cite{heller} proved
\begin{thm}\label{ggthm}
 Let $f(\not \equiv 0)$ be a solution of the equation (\ref{sde}) with the coefficients satisfying 
 \begin{enumerate}
 \item $\rho(B)<\rho(A)\leq \frac{1}{2}$ or
 \item $A(z)$ is a transcendental entire functions with $\rho(A)=0$ and $B(z)$ is a polynomial.
 \end{enumerate}
 then $\rho(f)=\infty.$
 \end{thm}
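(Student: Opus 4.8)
The plan is to argue by contradiction: assume some non-trivial solution $f$ has finite order $\rho(f)=\rho<\infty$ and derive a contradiction. Both hypotheses will run through the same machinery, obtained by combining Wiman--Valiron theory applied to the solution $f$ with a minimum-modulus ($\cos\pi\rho$-type) estimate applied to the coefficient $A$. The reason these two tools fit together is that Wiman--Valiron controls $f$ only at one favourable point on each circle, whereas the minimum-modulus estimate for $A$ is \emph{uniform} over the whole circle, so both may be evaluated at the same point.

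First I would observe that $f$ is entire (all solutions of (\ref{sde}) are) and in fact transcendental: if $f$ were a polynomial, solving the equation for $A=-(f''+Bf)/f'$ would force $A$ to be rational, contradicting that $A$ is transcendental in case (2) and of positive order $\rho(A)>\rho(B)\ge 0$ in case (1). Let $\nu(r)$ be the central index and, for each $r$ outside an exceptional set $E_1$ of finite logarithmic measure, pick $z_r$ with $|z_r|=r$ and $|f(z_r)|=M(r,f)$. Wiman--Valiron theory gives $\frac{f^{(k)}(z_r)}{f(z_r)}=\left(\frac{\nu(r)}{z_r}\right)^{k}(1+o(1))$ for $k=1,2$ as $r\to\infty$, $r\notin E_1$. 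Since $\rho(f)=\rho<\infty$, the central index is polynomially bounded, $\nu(r)\le r^{\rho+\epsilon}$, so $f'/f$ and $f''/f$ are at most polynomial in $r$ at $z_r$, while $\nu(r)\to\infty$ gives $f/f'=O(r)$ there.

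Next I would invoke the $\cos\pi\rho$ theorem for $A$. In case (2), $\rho(A)=0$, and in case (1), $\rho(A)<1/2$, so in either situation $\cos\pi\rho(A)>0$; the minimum-modulus theorem then produces a set $F\subset(1,\infty)$ of positive lower logarithmic density on which $\log\min_{|z|=r}|A(z)|\ge(\cos\pi\rho(A)-\epsilon)\log M(r,A)$. As $A$ is transcendental, this lower bound is super-polynomial in $r$ in case (2) and of size $\exp(cr^{\rho(A)-\epsilon})$ along a sequence in case (1). Evaluating the differential equation at $z_r$ and substituting the Wiman--Valiron expressions into $A(z_r)=-\frac{f''(z_r)}{f'(z_r)}-B(z_r)\frac{f(z_r)}{f'(z_r)}$ yields $|A(z_r)|\le\frac{\nu(r)}{r}(1+o(1))+|B(z_r)|\frac{r}{\nu(r)}(1+o(1))$. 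For $r\in F\setminus E_1$ the left side is bounded below by the super-polynomial (resp.\ exponential) minimum-modulus estimate, whereas the right side is polynomially bounded in case (2) (as $B$ is a polynomial), and in case (1) is at most $r\exp(r^{\rho(B)+\epsilon})$ with $\rho(B)<\rho(A)$. For large $r\in F\setminus E_1$ this is impossible, so $\rho(f)=\infty$.

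The main obstacle is the endpoint $\rho(A)=1/2$ allowed in hypothesis (1), where $\cos\pi\rho(A)=0$ and the crude estimate degenerates to $\log\min_{|z|=r}|A|\ge-\epsilon\log M(r,A)$, which no longer forces $|A(z_r)|$ to be large. Handling this boundary requires a sharper tool: a refined form of the $\cos\pi\rho$ theorem sensitive to the type of $A$, or a minimum-modulus result of Hellerstein--Miles--Rossi type, to recover an adequate lower bound for $\min_{|z|=r}|A|$ on a set of radii of positive density. I would therefore dispose of $\rho(A)<1/2$ by the clean argument above and isolate the equality case for this refined estimate. A minor technical point, easily settled, is that $F\setminus E_1$ is unbounded, since $F$ has positive lower logarithmic density while $E_1$ has finite logarithmic measure, so the contradiction is reached along a sequence $r_n\to\infty$.
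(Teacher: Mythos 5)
A preliminary remark: this theorem is one the paper only \emph{quotes} (it is due to Gundersen \cite{finitegg} and Hellerstein--Miles--Rossi \cite{heller}); the paper contains no proof of it, so your proposal must be measured against what those proofs actually require rather than against anything in the text. Your case (2) is essentially correct: $\rho(A)=0$ forces $\mu(A)=0$, so Barry's minimum-modulus theorem (the paper's Lemma~\ref{loword}) gives $\log m(r,A)>\cos(\pi\alpha)\log M(r,A)$ on a set of logarithmic density near $1$, which survives deletion of the finite-logarithmic-measure Wiman--Valiron exceptional set; and since $A$ is transcendental, $\log M(r,A)/\log r\to\infty$ as a genuine limit, so $m(r,A)$ is super-polynomial at precisely the radii you use, contradicting the polynomial bound $|A(z_r)|\le \frac{\nu(r)}{r}(1+o(1))+|B(z_r)|\,r\,(1+o(1))$.

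Case (1), however, contains two genuine gaps. First, for $\rho(A)<1/2$ you correctly note that the bound $m(r,A)\ge\exp\left(c\,r^{\rho(A)-\epsilon}\right)$ holds only ``along a sequence,'' and then immediately apply it ``for $r\in F\setminus E_1$.'' These are different sets. The $\cos\pi\rho$/Barry estimate $m(r,A)\ge M(r,A)^{\cos\pi\alpha}$ holds on a set $F$ of positive logarithmic density, but it converts into an exponential lower bound only at radii where $\log M(r,A)\ge r^{\rho(A)-\epsilon}$; since order is a $\limsup$, such radii are guaranteed only along a sequence, and nothing forces that sequence (or the intervals $[s_n,s_n^{c}]$ that monotonicity of $\log M$ attaches to it) to meet $F\setminus E_1$. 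Density counting does not close this: Barry guarantees $F$ density at least $1-\mu(A)/\alpha$ with $\alpha<1/2$, monotonicity gives the set $\{r:\log M(r,A)\ge r^{\beta}\}$, $\rho(B)<\beta<\rho(A)$, upper density roughly $1-\beta/\rho(A)$, and with, say, $\mu(A)=0.2$, $\rho(B)=0.3$, $\rho(A)=0.4$ these guaranteed densities sum to less than $1$, so the two sets may be disjoint. Your argument is therefore complete only under the stronger hypothesis $\rho(B)<\mu(A)$, in which case $\log M(r,A)\ge r^{\beta}$ for \emph{all} large $r$ and the issue evaporates; the regime $\mu(A)\le\rho(B)<\rho(A)$ remains open in your write-up. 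Second, the endpoint $\rho(A)=1/2$, which you concede, is not a removable technicality of your method: $\cos\sqrt{z}$ has order exactly $1/2$ and bounded minimum modulus on the positive axis, so no refinement of minimum-modulus counting alone can supply the lower bound you need there. That endpoint is precisely the contribution of Hellerstein--Miles--Rossi, whose proof rests on entirely different machinery. As it stands, your proposal proves case (2) and a weakened version of case (1), not the theorem as stated.
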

Further, Frei \cite{frei}, Ozawa \cite{ozawa}, Amemiya and Ozawa \cite{ame}, Gundersen \cite{ggpol} and Langley \cite{lang} proved that all non-trivial solutions are of infinite order for the differential equation 
$$f''+Ce^{-z}f'+B(z)f=0$$
for any nonzero constant $C$ and for any nonconstant polynomial $B(z)$. J. Heittokangas, J. R. Long, L. Shi, X. Wu,  P. C. Wu, X. B. Wu, and Zhang gave conditions on the coefficients $A(z)$ and $B(z)$ so that all solutions $f(\not \equiv 0)$ are of infinite order. Their results can be found in [\cite{jlong}, \cite{extremal}, \cite{wu}, \cite{zhe}]. In Kumar and Saini \cite{dsm} we gave conditions on coefficients  and proved the following theorem:
\begin{thm}\label{prethm}
 Suppose $A(z)$ be an entire function with $\lambda(A)<\rho(A)$ and 
 \begin{enumerate}
 \item $B(z)$ be a transcendental entire function with $\rho(B)\neq \rho(A)$ or
 \item $B(z)$ be an entire function having Fabry gap
 \end{enumerate}
 then all non-trivail solutions of the equation (\ref{sde}) are of infinite order. 
 \end{thm}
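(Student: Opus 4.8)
The plan is to prove Theorem \ref{prethm} by contradiction: assume that some nontrivial solution $f$ of equation (\ref{sde}) has finite order, and derive a contradiction with the structural hypotheses on $A$ and $B$. The decisive hypothesis is $\lambda(A)<\rho(A)$, which via the Hadamard factorization lets me write $A(z)=h(z)e^{P(z)}$ where $P$ is a polynomial of degree $q=\rho(A)$ (assuming $\rho(A)$ is finite; the infinite-order case for $A$ would be handled separately or is excluded) and $h$ is the canonical product built from the zeros of $A$, so that $\rho(h)=\lambda(A)<q$. The point of this factorization is that $A$ has a dominant exponential direction: there is a sector in which $|A(z)|$ grows like $\exp(\delta r^{q})$ and another (the opposite critical ray region) in which $A$ decays, and in the growth sector $A$ overwhelmingly dominates the lower-order factor $h$. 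I would fix a ray $\arg z=\theta$ along which $\operatorname{Re}P(z)$ is maximal so that $A$ grows fastest.

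First I would recall the standard a priori growth lemma for finite-order solutions: if $f\not\equiv0$ solves (\ref{sde}) and $\rho(f)<\infty$, then by the Wiman--Valiron theory there is a set $E$ of finite logarithmic measure and points $z_r$ with $|z_r|=r\notin E$ on which $|f(z_r)|=M(r,f)$ and the logarithmic derivatives satisfy $f'/f\sim \nu(r)/z$ and $f''/f\sim(\nu(r)/z)^2$, where $\nu(r)$ is the central index with $\nu(r)=O(r^{\rho(f)})$ off the exceptional set. Rewriting (\ref{sde}) as
\begin{equation}\label{rewrite}
-B(z)=\frac{f''}{f}+A(z)\frac{f'}{f},
\end{equation}
I would estimate the right-hand side along the maximal-growth ray of $A$. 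On one hand $B$ is transcendental (case (1)) or has Fabry gaps (case (2)), and on the other hand the right side is controlled by $A$ times a polynomial-in-$r$ factor coming from Wiman--Valiron. The strategy is to show these two sides are incompatible in growth.

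The two cases then diverge in how the contradiction is extracted. In case (1), where $\rho(B)\neq\rho(A)$, I would split into $\rho(B)<\rho(A)$ and $\rho(B)>\rho(A)$. If $\rho(B)>\rho(A)$ the contradiction is immediate from Theorem \ref{finitgg}, since a finite-order solution forces $\rho(B)\le\rho(A)$; so the real content is $\rho(B)<\rho(A)=q$. Here I would work in the sector where $A$ dominates: from (\ref{rewrite}), the term $A(z)f'/f$ has growth governed by $\exp(\delta r^{q})$, which exceeds the growth rate $r^{\rho(B)+\varepsilon}$ controlling $B$ and the Wiman--Valiron polynomial factors, unless the $f''/f$ term cancels it. Ruling out this cancellation along a suitable ray (using that $h$ and the central-index factors are of order strictly less than $q$) yields the contradiction. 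In case (2), the Fabry gap hypothesis is the key tool: a Fabry-gap entire function satisfies $\log|B(z)|\sim\log M(|z|,B)$ outside a set of zero density of rays, so $B$ grows regularly in \emph{almost every} direction rather than possibly decaying in the critical directions of $A$. This regularity lets me choose the estimating ray to lie simultaneously in the growth sector of $A$ and a direction where $B$ attains near-maximal modulus, again forcing an impossible balance in (\ref{rewrite}).

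The main obstacle I anticipate is controlling the interaction between the dominant exponential factor $e^{P(z)}$ of $A$ and the behavior of $B$ precisely along the same ray, while simultaneously keeping the Wiman--Valiron estimates valid off the exceptional set $E$. In case (2) the Fabry-gap machinery is essential precisely because, without it, $B$ could in principle decay exactly in the sector where $A$ grows, destroying the comparison; the gap theorem guarantees directional regularity of $B$ and is the crux of that argument. The finite-logarithmic-measure exceptional sets from Wiman--Valiron and from the Fabry/Hadamard estimates must be arranged to avoid one another, which is routine but must be stated carefully so that a common admissible ray and radius sequence actually exist.
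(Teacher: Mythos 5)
Your overall skeleton (argue by contradiction, factor $A=he^{P}$ using $\lambda(A)<\rho(A)$, dispose of $\rho(B)>\rho(A)$ via Theorem \ref{finitgg}) matches the intended argument, but your central technical tool is the wrong one. The Wiman--Valiron estimates $f'/f\sim\nu(r)/z$ and $f''/f\sim(\nu(r)/z)^{2}$ are valid only at points $z_r$ where $|f(z_r)|=M(r,f)$, and you have no control whatsoever over $\arg z_r$; you cannot ``estimate along the maximal-growth ray of $A$'' with them. The entire difficulty of this theorem is that the comparison must be made on rays chosen according to the critical directions of $P$, and for that one needs Gundersen's pointwise logarithmic-derivative estimates (Lemma \ref{gunlem}): for a finite-order solution, $|f^{(k)}(z)/f^{(j)}(z)|$ ($k>j$) is bounded by a fixed power of $|z|$ on every ray outside a set of arguments of linear measure zero. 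This is what the proof in \cite{dsm} uses, and the same scheme recurs verbatim in this paper's proofs of Theorems \ref{lowthm} and \ref{bordi}.

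Second, even granting ray-wise estimates, your extraction of the contradiction fails in both cases. In case (1) with $\rho(B)<\rho(A)$ you work with $-B=f''/f+Af'/f$ in the growth sector of $A$ and hope to ``rule out cancellation''; but the cancellation is real and cannot be ruled out: logarithmic-derivative estimates bound $f^{(k)}/f^{(j)}$ only for $k>j$, and $|f'/f|$ admits no lower bound along a prescribed ray (this is exactly how a putative finite-order solution would behave where $A$ is huge), while the order of $h$ is irrelevant to this issue. The actual proof divides by $f'$ instead of $f$: $|A|\le|f''/f'|+|B|\,|f/f'|$, bounds $|f''/f'|$ by a power of $r$ (Gundersen) and $|f/f'|\le r$ by Kwon's Lemma \ref{kwonlem}, so that in the growth sector $\exp\{(1-\epsilon)\delta(P,\theta)r^{n}\}\le r^{2\rho(f)}+\exp(r^{\beta})\,r$ with $\beta<n$, which is absurd for large $r$. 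In case (2) you pick the wrong sector: you propose a ray lying ``simultaneously in the growth sector of $A$ and a direction where $B$ attains near-maximal modulus,'' but there $|B|\le|f''/f|+|A|\,|f'/f|$ gives no contradiction whenever $\rho(B)\le\rho(A)$ --- and $\rho(B)=\rho(A)$ is precisely the situation case (2) is designed to cover, case (1) having handled $\rho(B)\neq\rho(A)$. The point of the Fabry-gap hypothesis (Lemma \ref{fablem}, Remark \ref{fabremark}) is that $|B(z)|>\exp(r^{\sigma-2\epsilon})$ holds on the \emph{whole} circle $|z|=r$ for a set of radii of positive upper logarithmic density; this frees you to move to the decay sector of $A$, where $\delta(P,\theta)<0$ and inequality (\ref{eq2le}) make $|A|$ tend to zero, so that $\exp(r^{\sigma-2\epsilon})<|B|\le|f''/f|+|A|\,|f'/f|\le r^{2\rho(f)}(1+o(1))$ yields the contradiction. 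Your proposal never uses the decay sector, so the crucial subcase is left unproved.
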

\begin{defn}
An entire function $f(z)=\sum_{n=0}^{\infty}a_{\lambda_n}z^{\lambda_n}$ has Fabry gap if the sequence $(\lambda_n)$ satisfies 
$$ \frac{\lambda_n}{n} \rightarrow \infty $$
as $ n \rightarrow \infty$. An entire function $f(z)$ with Fabry gap satisfies $\rho(f)>0$ Hayman and Rossi \cite{hayman}.
\end{defn}

The concept of hyper-order were used to further investigate the growth of infinite order solutions of complex differential equations. In this context, K. H. Kwon \cite{kwo} proved that: 
\begin{thm}
Suppose $P(z)=a_nz^n+\ldots a_0$ and $Q(z)=b_nz^n+\ldots b_0$ be non-constant polynomials of degree $n$  such that either $\arg{a_n}\neq\arg{b_n}$ or $a_n=cb_n$ $(0<c<1)$, $h_1(z)$ and $h_0(z)$ be entire functions satisfying $\rho(h_i)<n$, $i=0,1$. Then every non-trivial solutions $f(z)$ of 
\begin{equation}\label{kwoeq}
f''+h_1e^{P(z)}f'+h_0e^{Q(z)}f=0, \quad Q(z)\not \equiv 0
\end{equation}
are of infinite order with $\rho_2(f)\geq n$.    
\end{thm}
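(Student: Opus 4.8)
The plan is to prove Kwon's theorem in two stages: first establish that every non-trivial solution $f$ has infinite order, then upgrade this to the hyper-order bound $\rho_2(f)\geq n$. Both stages rest on the same analytic engine, namely a careful estimate of the coefficients $h_1e^{P}$ and $h_0e^{Q}$ along rays together with the standard logarithmic derivative machinery from Nevanlinna theory. I would begin by rewriting equation (\ref{kwoeq}) as
\begin{equation*}
-B(z)=\frac{f''}{f}+A(z)\frac{f'}{f},
\end{equation*}
where $A=h_1e^{P}$ and $B=h_0e^{Q}$, so that control of the growth of $f$ comes down to finding directions in which $|B|$ dominates while $|A|$ stays comparatively small, forcing $f''/f$ or $f'/f$ to be large.

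The key geometric input is the phase condition on the leading coefficients. Writing $P(z)=a_nz^n+\cdots$ and $Q(z)=b_nz^n+\cdots$, the real part $\operatorname{Re}(a_nz^n)$ governs the exponential size of $e^{P}$ in the sector where $\arg(a_nz^n)\in(-\pi/2,\pi/2)$ and its smallness in the complementary sector; likewise for $e^{Q}$. First I would fix a ray $\arg z=\theta$ on which $\operatorname{Re}(b_nz^n)>0$ grows like $\delta r^n$ for some $\delta>0$, making $|B(z)|$ grow like $\exp(\delta r^n)$ (using $\rho(h_0)<n$ to absorb $h_1$ into a lower-order factor). The hypothesis $\arg a_n\neq\arg b_n$, or alternatively $a_n=cb_n$ with $0<c<1$, is precisely what lets me choose $\theta$ so that simultaneously $\operatorname{Re}(a_nz^n)$ is either negative (so $|A|$ is exponentially small) or, in the $a_n=cb_n$ case, grows only like $c\,\delta r^n$, which is strictly slower than $|B|$. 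On such a ray, if $f$ had finite order then the logarithmic derivative estimates $|f'/f|,|f''/f|\leq r^{M}$ outside a small exceptional set would contradict $|B|\geq\exp(\delta r^n)$; this yields $\rho(f)=\infty$.

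For the hyper-order estimate I would invoke the refined logarithmic derivative bound for functions of infinite order: outside an exceptional set of finite logarithmic measure,
\begin{equation*}
\left|\frac{f^{(k)}(z)}{f(z)}\right|\leq r\bigl(T(2r,f)\bigr)^{k+\varepsilon}
\end{equation*}
for $|z|=r$. Feeding this into the rearranged equation on the chosen ray gives $\exp(\delta r^n)\leq r\,(T(2r,f))^{2+\varepsilon}$, and taking logarithms twice produces $n\leq\limsup_{r\to\infty}\frac{\log\log T(r,f)}{\log r}=\rho_2(f)$, which is exactly the claim. The main obstacle, and the step requiring the most care, is handling the exceptional sets: the rays along which the clean size estimates hold must be shown to avoid the finite-logarithmic-measure exceptional set coming from the logarithmic derivative lemma, which I would arrange by a standard density argument choosing $r$ along a sequence outside the bad set, together with a Phragm\'en--Lindel\"of type comparison to ensure the exponential domination of $B$ over $A$ persists uniformly in a full sector rather than on a single ray. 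The borderline case $a_n=cb_n$ with $c$ close to $1$ is the most delicate, since there the gap between the growth rates of $A$ and $B$ is only a constant factor $(1-c)$, and I would need to verify that this fixed positive gap still suffices to drive the argument through.
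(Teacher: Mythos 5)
First, a point of reference: the paper does not prove this statement at all; it is quoted as Kwon's theorem from \cite{kwo}. So your proposal can only be compared with the machinery the paper deploys for its own analogous results (Theorems [\ref{hypthm}], [\ref{lowthm}], [\ref{bordi}], etc.), and in that sense your route is exactly the standard one: write $A=h_1e^{P}$, $B=h_0e^{Q}$, use the phase functions $\delta(P,\theta)=\Re(a_ne^{\iota n\theta})$, $\delta(Q,\theta)=\Re(b_ne^{\iota n\theta})$ to locate rays where $|B|$ is exponentially large and $|A|$ is either exponentially small or grows with a strictly smaller exponent, then feed Gundersen's logarithmic derivative estimates (Lemma [\ref{gunlem}]) into the rearranged equation, first in the polynomial (finite-order) form to force $\rho(f)=\infty$, then in the $T(r,f)$ form to get $\rho_2(f)\geq n$. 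Your phase analysis is correct: when $\arg a_n\neq\arg b_n$ the open semicircular arcs $\{\delta(Q,\theta)>0\}$ and $\{\delta(P,\theta)<0\}$ fail to be complementary and hence overlap in a set of positive angular measure, and when $a_n=cb_n$ the gap $(1-c)\delta(Q,\theta)r^n$ does the work. Your worry about $c$ close to $1$ is unfounded: $c<1$ is fixed, so one simply chooses $\epsilon$ small relative to $1-c$ and the term $\exp\{((1-\epsilon)-(1+\epsilon)c)\delta(Q,\theta)r^n\}$ still dominates every polynomial factor.

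The genuine gap is in the step you dispose of parenthetically: the claim that $|B(re^{\iota\theta})|$ grows like $\exp(\delta r^n)$ on a chosen ray ``using $\rho(h_0)<n$ to absorb it into a lower-order factor.'' A lower bound for $|h_0e^{Q}|$ along a ray is not a formal consequence of $\rho(h_0)<n$: the entire function $h_0$ may vanish at infinitely many points of that very ray (or its zeros may cluster near it), so no pointwise lower bound for $|h_0(re^{\iota\theta})|$ holds in general. What rescues the argument is that the set of bad directions has linear measure zero; this is precisely the Bank--Laine--Langley lemma (Lemma [\ref{implem}] of the paper) applied to $B=h_0e^{Q}$, which satisfies $\lambda(B)\leq\rho(h_0)<n=\rho(B)$: for $\theta$ outside a measure-zero angular set with $\delta(Q,\theta)>0$ one has $|B(re^{\iota\theta})|\geq\exp\left((1-\epsilon)\delta(Q,\theta)r^n\right)$ for all large $r$. (Upper bounds, by contrast, are free: $|A(re^{\iota\theta})|\leq\exp(r^{\rho(h_1)+\epsilon})\exp(\delta(P,\theta)r^n+O(r^{n-1}))$ needs no exceptional set.) Consequently your bookkeeping of exceptional sets is aimed at the wrong target: the danger is angular (the zeros of $h_0$, plus the measure-zero set of rays excluded by part (i) of Lemma [\ref{gunlem}]), not radial. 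Intersect the positive-measure set of good phases with the complements of these two angular null sets, fix one such ray, and all estimates hold there for every sufficiently large $r$; no sequence of radii avoiding a finite-logarithmic-measure set, and no Phragm\'en--Lindel\"of sector-uniformity argument, is needed at all. With Lemma [\ref{implem}] inserted at that point, your two-stage argument goes through as planned.
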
 
For an entire function $f(z)$ the hyper-order is defined in the follwoing manner:
$$ \rho_2(f)= \limsup_{r \rightarrow \infty} \frac{\log^+ \log^+ \log^+ M(r, f)}{\log r}=   \limsup_{r \rightarrow \infty} \frac{\log^+ \log^+T(r, f)}{\log r}$$
C. Zongxuan \cite{pz}, investigated the differential equation (\ref{kwoeq}) for some special cases and proved the following theorem:
\begin{thm}
Let $b\neq-1$ be any complex constant, $h(z)$ be a non-zero polynomial. Then every solution $f(\not \equiv 0)$ of the equation
\begin{equation}
f''+e^{-z}f'+h(z)e^{bz}f=0
\end{equation}
has infinite order and $\rho_2(f)=1.$

\end{thm}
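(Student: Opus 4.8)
The plan is to establish the single inequality $\rho_2(f)\ge 1$ for every nontrivial solution $f$, since this already forces $\rho(f)=\infty$ (a finite-order entire function has $\rho_2=0$), and then to close the gap with the easy upper bound $\rho_2(f)\le 1$. The latter is immediate from the general principle that for $f''+A f'+Bf=0$ with entire coefficients one has $\rho_2(f)\le\max\{\rho(A),\rho(B)\}$, and here $\rho(e^{-z})=\rho(h\,e^{bz})=1$. Two structural facts will drive the lower bound. First, writing $A(z)=e^{-z}$ and $B(z)=h(z)e^{bz}$, the Wronskian of any fundamental pair $f_1,f_2$ satisfies $W'=-A\,W$, hence $W=f_1f_2'-f_1'f_2=C\exp(e^{-z})$ for some $C\ne0$; this function has $\rho_2(W)=1$, so at least one solution must carry hyper-order $1$. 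Second, I will use a uniform logarithmic-derivative estimate (Gundersen / Chen--Yang type): if $\rho_2(f)=\sigma<1$ then, outside a set of $r$-values of finite logarithmic measure, $|f^{(k)}(z)/f(z)|\le\exp(|z|^{\sigma+\varepsilon})$ for $k=1,2$ with $\sigma+\varepsilon<1$.

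Assume for contradiction that $\rho_2(f)=\sigma<1$. The main work is a ray analysis of $B=-f''/f-A\,f'/f$. Write $b=|b|e^{i\phi}$. When $b$ is not a negative real number (and $b\ne 0$), the two half-plane conditions $\cos\theta\ge 0$ and $\cos(\theta+\phi)>0$ are simultaneously solvable, so I may fix a ray $\arg z=\theta$ on which $|A(z)|=e^{-r\cos\theta}\le 1$ while $|B(z)|=|h(z)|e^{\delta r}$ with $\delta=|b|\cos(\theta+\phi)>0$. Along this ray the equation yields $|h(z)|e^{\delta r}\le 2\exp(r^{\sigma+\varepsilon})$, which is impossible for large $r$ because $\delta>0$ and $\sigma+\varepsilon<1$. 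When $b$ is real with $b<-1$, the necessary condition $T(r,B)\le T(r,A)+O(1)$ of Theorem \ref{finitgg} already rules out finite order; more robustly, on the negative real axis $B$ dominates $A$ (since $|B(-r)|\sim|h(-r)|e^{|b|r}$ while $|A(-r)|=e^{r}$ with $|b|>1$), and the same estimate gives $e^{(|b|-1)r}\lesssim\exp(r^{\sigma+\varepsilon})$, again a contradiction.

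The genuine obstacle is the range $b\in(-1,0]$ (which contains Frei's classical equation $b=0$): here $A=e^{-z}$ dominates $B$ throughout the left half-plane, so no ray simultaneously makes $A$ small and $B$ large, and $T(r,B)\le T(r,A)+O(1)$ holds with room to spare. For this case the plan is a dominant-balance argument in the left half-plane, where $|e^{-z}|$ is large: the equation is asymptotically $f''+e^{-z}f'\approx 0$, so that $(\log f')'\approx -e^{-z}$ and $f'\sim C\exp(e^{-z})$, a function of hyper-order $1$, which would force $\rho_2(f)\ge 1$. The hard part is making this rigorous, namely controlling the discarded term $B f$ relative to $A f'$ and handling the transition across $\operatorname{Re}z=0$; I expect this to require an asymptotic-integration and Phragm\'en--Lindel\"of analysis in the style of Hille and Gundersen rather than a crude modulus estimate. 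It is precisely here that the hypothesis $b\ne -1$ enters: at $b=-1$ the coefficient $B=h\,e^{-z}$ shares the exponential scale of $A$, the equation $f''+e^{-z}(f'+h f)=0$ partially factors, and finite-order (subnormal) solutions can appear, so the value must be excluded.

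Combining the three cases gives $\rho_2(f)\ge 1$ for every nontrivial $f$, hence $\rho(f)=\infty$; together with $\rho_2(f)\le 1$ this yields $\rho_2(f)=1$. The Wronskian identity $W=C\exp(e^{-z})$ serves as a consistency check, since $1=\rho_2(W)\le\max\{\rho_2(f_1),\rho_2(f_2)\}$ is automatically compatible with every solution having hyper-order exactly $1$.
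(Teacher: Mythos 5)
Your case split has the right shape, and the parts you actually prove are correct: the upper bound $\rho_2(f)\le 1$ follows from the quoted result $\rho_2(f)\le\max\{\rho(A),\rho(B)\}$, and your ray arguments for the two easy ranges ($b$ not a negative real number, and $b$ real with $b<-1$) do yield $\rho_2(f)\ge 1$, provided you run them with the radial form of the logarithmic-derivative estimate (part (ii) of Lemma [\ref{gunlem}]), whose exceptional set of radii has finite logarithmic measure and so leaves arbitrarily large admissible $r$ on the chosen ray. But observe that these two ranges are exactly the cases already covered by Kwon's theorem \cite{kwo} quoted in the introduction: with $P(z)=-z$, $Q(z)=bz$ one has either $\arg(-1)\neq\arg b$, or $-1=cb$ with $c=1/|b|\in(0,1)$ when $b<-1$. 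The entire substance of the cited theorem of Chen \cite{pz} is the remaining range $-1<b<0$ (plus the degenerate value $b=0$, on which see below), and there you offer no proof, only a dominant-balance heuristic that you yourself flag as unproved. That is a genuine gap, and it is not a corner case: it is the theorem.

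Moreover, the heuristic cannot be repaired as stated, because its premise --- that in the left half-plane the dominant balance is $f''+e^{-z}f'\approx 0$, forcing $f'\sim C\exp(e^{-z})$ and hence hyper-order $1$ --- is false for precisely the kind of solution you must exclude. The competing balance $e^{-z}f'\approx -h(z)e^{bz}f$, i.e. $f'/f\approx -h(z)e^{(b+1)z}$, is locally self-consistent in the left half-plane when $-1<b\le 0$: it makes $f''/f$ a combination of terms of size $|e^{(b+1)z}|$ and $|e^{2(b+1)z}|$, which are exponentially small for $\operatorname{Re}z\to-\infty$ since $b+1>0$, hence negligible against $|h(z)e^{bz}|$; so no modulus estimate confined to the left half-plane can rule this branch out. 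Worse, at $b=0$ this branch is realized by genuine finite-order solutions: with $h\equiv -1$ the function $f(z)=e^z+1$ solves $f''+e^{-z}f'-f=0$ and has order $1$ and hyper-order $0$. Consequently any argument giving $\rho_2(f)\ge 1$ uniformly for $b\in(-1,0]$ and every non-zero polynomial $h$ would prove a false statement. (This example also shows the theorem as transcribed in the paper is imprecise: in \cite{pz} the standing hypothesis is that $Q=he^{bz}$ has order $1$, i.e. $b\neq 0$.) The true obstruction in the range $-1<b<0$ is global, and this is where Chen's proof does its real work --- via Nevanlinna theory and Wiman--Valiron arguments in which $b\neq -1$ enters essentially, not through a local factorization or a crude ray estimate. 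Note finally that the paper itself contains no proof of this statement (it is quoted as background from \cite{pz}), so there is nothing internal to compare with; judged on its own terms, what you have established is Kwon's older theorem specialized to this equation, not the result at hand.
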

K. H. Kwon \cite{kwon} found the lower bound for the hyper-order of all solutions $f(\not \equiv 0)$ in the following theorem:
 \begin{thm}\cite{kwon}\label{kwonthm}
Suppose that  $A(z)$ and $B(z)$ be entire functions such that (i) $\rho(A)<\rho(B)$ or (ii) $\rho(B)<\rho(A)<\frac{1}{2}$ then $$\rho_2(f)\geq \max\{\ \rho(A),\rho(B)\}\ $$
for all solutions $f(\not \equiv 0)$ of the equation (\ref{sde}).
\end{thm}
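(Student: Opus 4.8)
The plan is to rewrite \eqref{sde} so that the dominant coefficient is isolated and then compare growth rates; the two hypotheses are treated separately, since in (i) the coefficient $B$ dominates while in (ii) the coefficient $A$ dominates. In both cases I would first record that $f$ is already of infinite order: under (i) this follows from Theorem~\ref{finitgg}, because a finite-order solution would force $T(r,B)\le T(r,A)+O(1)$ and hence $\rho(B)\le\rho(A)$, contradicting (i); under (ii) it follows from Theorem~\ref{ggthm}(1). Thus $T(r,f)$ grows faster than any power of $r$, and the remaining task is to bound $\log T(r,f)$ (equivalently the central index) from below along a suitable sequence by growth of type $\exp\{c\,r^{\mu}\}$ with $\mu\uparrow\max\{\rho(A),\rho(B)\}$.

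For case (i), write $-B=\dfrac{f''}{f}+A\dfrac{f'}{f}$ and pass to proximity functions, giving
$$T(r,B)\le T(r,A)+m\!\left(r,\tfrac{f'}{f}\right)+m\!\left(r,\tfrac{f''}{f}\right)+O(1).$$
The logarithmic derivative lemma yields $m(r,f^{(k)}/f)=O(\log T(r,f)+\log r)$ outside an exceptional set $E$ of finite linear measure, so that $T(r,B)\le T(r,A)+O(\log T(r,f)+\log r)$. Choosing $\rho(A)<\mu<\nu<\rho(B)$, along a sequence $r_n\notin E$ realising the order of $B$ one has $T(r_n,A)=o(r_n^{\nu})$ while $T(r_n,B)\ge r_n^{\nu}$, which forces $r_n^{\nu}\le C\log T(r_n,f)$ (using infinite order to absorb $\log r_n$). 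Taking iterated logarithms and dividing by $\log r_n$ gives $\rho_2(f)\ge\nu$, and letting $\nu\uparrow\rho(B)$ yields $\rho_2(f)\ge\rho(B)=\max\{\rho(A),\rho(B)\}$.

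For case (ii) the inequality $\rho(B)<\rho(A)$ runs the wrong way for the proximity argument, so instead I would argue pointwise at the maximum modulus point and exploit the smallness of $\rho(A)$. Assume for contradiction that $\sigma:=\rho_2(f)<\rho(A)$, and fix $\epsilon>0$ with $\max\{\sigma,\rho(B)\}+\epsilon<\rho(A)-\epsilon$. Choose $z_r$ with $|f(z_r)|=M(r,f)$. By Wiman--Valiron theory, outside a set of finite logarithmic measure, $f^{(k)}(z_r)/f(z_r)=(\nu(r)/z_r)^{k}(1+o(1))$ for $k=1,2$, where $\nu(r)$ is the central index; since $\sigma=\limsup_{r\to\infty}\dfrac{\log\log\nu(r)}{\log r}$ and $\nu$ is nondecreasing, $\nu(r)\le\exp\{r^{\sigma+\epsilon}\}$ for all large $r$. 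Evaluating \eqref{sde} at $z_r$ and solving for $A$ gives $A(z_r)=-f''(z_r)/f'(z_r)-B(z_r)\,f(z_r)/f'(z_r)$; the asymptotics bound $|f''(z_r)/f'(z_r)|\approx\nu(r)/r$ and $|f(z_r)/f'(z_r)|\approx r/\nu(r)\le1$, so together with the order bound $|B(z_r)|\le\exp\{r^{\rho(B)+\epsilon}\}$ one obtains $|A(z_r)|\le\exp\{r^{\max\{\sigma,\rho(B)\}+\epsilon}\}$ for all large $r$ off the exceptional set.

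Because $\rho(A)<\tfrac12$ we have $\cos\pi\rho(A)>0$, so the $\cos\pi\rho$ (minimum modulus) theorem supplies a set of radii of positive lower logarithmic density on which $\min_{|z|=r}|A(z)|\ge\exp\{c\,r^{\rho(A)-\epsilon}\}$; this lower bound is valid on the whole circle, hence in particular at $z_r$. Intersecting this set with the complement of the two finite-measure exceptional sets (which have logarithmic density zero) produces a sequence $r_n\to\infty$ along which both estimates hold, giving $\exp\{c\,r_n^{\rho(A)-\epsilon}\}\le|A(z_{r_n})|\le\exp\{r_n^{\max\{\sigma,\rho(B)\}+\epsilon}\}$, which is impossible for large $n$. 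Hence $\rho_2(f)\ge\rho(A)=\max\{\rho(A),\rho(B)\}$. I expect the main obstacle to be precisely this coordination in case (ii): the lower bound for $|A|$ must be read off at the \emph{moving} maximum modulus point $z_r$ of $f$, which is why a \emph{global} lower bound for $A$ on the entire circle is needed, available only through the minimum modulus theorem and hence only when $\rho(A)<\tfrac12$, and why one must verify that the positive density of the minimum-modulus radii survives the removal of the Wiman--Valiron and logarithmic-derivative exceptional sets.
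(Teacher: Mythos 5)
First, a point of order: the paper does not prove this statement at all; Theorem \ref{kwonthm} is quoted from Kwon \cite{kwon} as a known preliminary result, so there is no in-paper proof to compare against, and I assess your argument on its own terms. Your case (i) is correct and is the standard route in the literature: writing $-B=f''/f+Af'/f$, estimating $T(r,B)=m(r,B)$ by $T(r,A)$ plus logarithmic-derivative terms, and running a sequence that realizes $\rho(B)$ outside the exceptional set (the only elided detail is that such a sequence exists, which follows from monotonicity of $T(r,B)$ since the exceptional set has finite measure).

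Case (ii), however, has a genuine gap at its crucial step. The Wiman--Valiron part is fine and yields $\log m(r,A)\le r^{\max\{\sigma,\rho(B)\}+\epsilon}$ for all large $r$ outside a set of finite logarithmic measure, where $m(r,A)=\min_{|z|=r}|A(z)|$. To contradict this you need, for every $\beta<\rho(A)$, that the set $\{r:\log m(r,A)>r^{\beta}\}$ has infinite logarithmic measure (so that it survives deletion of the exceptional sets). You assert this follows from the $\cos\pi\rho$ theorem, claiming the set where $\log m(r,A)\ge c\,r^{\rho(A)-\epsilon}$ has positive \emph{lower} logarithmic density. That is not what the Besicovitch--Barry theorem gives: it compares $m$ with $M$ \emph{at the same radius}, namely $\log m(r,A)>\cos(\pi\alpha)\log M(r,A)$ on a set of logarithmic density at least $1-\rho(A)/\alpha$ (or the lower-order variant, which is exactly the paper's Lemma \ref{loword}). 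To convert this into your bound you also need $\log M(r,A)\ge r^{\rho(A)-\epsilon}$ at those same radii, and since order is only a $\limsup$, the set where that holds can be very sparse when $\mu(A)<\rho(A)$: for an entire function whose zeros come in widely separated blocks, it is a union of intervals $[R_k,R_k^{1+O(\epsilon)}]$ with $R_{k+1}=e^{R_k}$, which has lower logarithmic density zero and upper density only of size comparable to $\epsilon/\rho(A)$. So your lower-density claim is false in general, and the naive intersection bookkeeping fails: the Barry set has density at least $1-\rho(A)/\alpha$ with $\alpha<\tfrac12$ forced, so the two densities need not sum to more than $1$ once $\epsilon$ is small compared with $\rho(A)^2$. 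What your argument actually establishes is $\rho_2(f)\ge\mu(A)$ (this is precisely why the paper's Lemma \ref{lowordB} is stated with the \emph{lower} order), not $\rho_2(f)\ge\rho(A)$. Obtaining the minimum-modulus bound at exponents arbitrarily close to the order, on a set of infinite logarithmic measure, is a genuine theorem about entire functions of order less than $\tfrac12$ --- it is the real content of Kwon's case (ii) --- and it cannot be obtained by the density juggling you describe. You do sense the difficulty (your closing remarks about ``coordination''), but the resolution you offer rests on the incorrect lower-density statement, so the gap stands.
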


Since the growth of an entire function with infinite order can be measured by its hyper-order. Therefore motivated from above theorems we have calculated the hyper-order of the  non-trivial solutions of the equation (\ref{sde}) in Theorem [\ref{prethm}] in the following theorem:
 \begin{thm}\label{hypthm}
 Let $A(z)$ and $B(z)$ be entire functions of finite order satisfying the hypothesis of the Theorem $[\ref{prethm}]$ then all non-trivial solutions $f(z)$ of the equation (\ref{sde}) have  $ \rho_2(f)=\max \{\ \rho(A),\rho(B) \}\ $.
 \end{thm}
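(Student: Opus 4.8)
The plan is to prove the two inequalities $\rho_2(f)\le s$ and $\rho_2(f)\ge s$ separately, where I abbreviate $\alpha=\rho(A)$, $\beta=\rho(B)$ and $s=\max\{\alpha,\beta\}$. Throughout I use that, by Theorem~\ref{prethm}, every solution $f\not\equiv 0$ already has $\rho(f)=\infty$, so only the second logarithm in the definition of $\rho_2$ is at play. The structural input coming from $\lambda(A)<\rho(A)$ is that, by the Hadamard factorization, $A=\pi\,e^{P}$ where $\pi$ is the canonical product over the zeros of $A$ with $\rho(\pi)=\lambda(A)$ and $P$ is a polynomial with $\deg P=\alpha=:n$; in particular $\alpha=n$ is a positive integer, and the plane decomposes into $2n$ sectors of opening $\pi/n$ on which $\operatorname{Re}P$ alternates sign, with $|A(z)|\ge\exp(\delta r^{n})$ for some $\delta>0$ on the $n$ ``dominant'' sectors.

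For the upper bound I would use Wiman--Valiron theory. Choosing, for $r$ outside a set of finite logarithmic measure, a point $z_r$ with $|z_r|=r$ and $|f(z_r)|=M(r,f)$, and inserting $f^{(k)}(z_r)/f(z_r)=(\nu(r,f)/z_r)^{k}(1+o(1))$ into (\ref{sde}), one obtains
\[
\nu(r,f)^{2}\le r\,\nu(r,f)\,M(r,A)(1+o(1))+r^{2}M(r,B)(1+o(1)),
\]
and hence $\nu(r,f)=O\big(r\,M(r,A)+r\,M(r,B)^{1/2}\big)$. Since $\rho_2(f)=\limsup_{r\to\infty}\frac{\log\log\nu(r,f)}{\log r}$ and $\log M(r,A)\le r^{\alpha+\varepsilon}$, $\log M(r,B)\le r^{\beta+\varepsilon}$ for large $r$, taking logarithms twice gives $\rho_2(f)\le s$.

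The lower bound splits according to which coefficient dominates. If $\beta>\alpha$ (which can only occur under hypothesis~(1)), then $s=\beta$ and I would argue directly from $B=-f''/f-A\,f'/f$. The logarithmic derivative lemma yields $m(r,f^{(k)}/f)=O(\log(r\,T(r,f)))$ off a set of finite logarithmic measure, and $\log T(r,f)\le r^{\rho_2(f)+\varepsilon}$; therefore $T(r,B)\le r^{\alpha+\varepsilon}+O(r^{\rho_2(f)+\varepsilon})$ for such $r$. Comparing this with a sequence $r_k\to\infty$ (avoiding the exceptional set) along which $T(r_k,B)\ge r_k^{\beta-\varepsilon}$, and using $\beta>\alpha$, forces $\rho_2(f)\ge\beta=s$. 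Note this case does not even use $\lambda(A)<\rho(A)$.

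The remaining case $\alpha\ge\beta$, where $s=\alpha=n$, is the main obstacle, and here the exponential structure of $A$ is essential. On a dominant sector $S$ one has $|A(z)|\ge\exp(\delta r^{n})$. Under hypothesis~(1) with $\beta<\alpha$ one also has $|B(z)|\le\exp(r^{\beta+\varepsilon})=o(|A(z)|)$ on $S$, so $A$ strictly dominates; assuming for contradiction that $\rho_2(f)=\sigma<n$ and invoking Gundersen's estimate $|f^{(k)}(z)/f(z)|\le\exp(r^{\sigma+\varepsilon})$ valid for $|z|=r$ outside a set of finite logarithmic measure, an asymptotic-integration argument (in the spirit of Hille) for (\ref{sde}) on $S$ produces growth of $f$ of hyper-order at least $n$, contradicting $\sigma<n$. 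Under hypothesis~(2) the coefficient $B$ may satisfy $\rho(B)=\alpha$ and is no longer dominated; here I would instead exploit the radial regularity of Fabry-gap functions, namely that $\log|B(re^{i\theta})|=(1+o(1))\log M(r,B)$ holds uniformly in $\theta$ for $r$ outside a set of finite logarithmic measure, so that $B$ drives the growth of $f$ precisely on the recessive sectors of $A$, and the same contradiction scheme delivers $\rho_2(f)\ge\max\{\alpha,\beta\}$. I expect the genuine difficulty to lie exactly in this last case: controlling $f$ on the recessive sectors, handling the error terms in the asymptotic integration uniformly in the argument, and converting the Fabry-gap radial regularity into a clean sector-by-sector domination.
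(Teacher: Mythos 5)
Your upper bound (Wiman--Valiron at maximum-modulus points together with the central-index characterization of $\rho_2$) and your first lower-bound case $\rho(A)<\rho(B)$ are correct; in effect they reprove Theorem [\ref{wuthm}] and Theorem [\ref{kwonthm}], which the paper simply quotes. Your Fabry-gap case is also the paper's own mechanism (part 2 of its proof): the minimum-modulus regularity of $B$ played against a recessive ray of $A$, where $|A(re^{i\theta})|\leq\exp((1-\epsilon)\delta(P,\theta)r^n)$ is bounded, and Gundersen's estimates for $f''/f$ and $f'/f$. Two corrections there: Lemma [\ref{fablem}] provides the regularity only on a set $H$ of positive upper logarithmic density, not off a set of finite logarithmic measure as you assert (harmless, since $H$ minus the Gundersen exceptional set is still unbounded); and this argument only yields $\rho_2(f)\geq\rho(B)$, so it settles hypothesis (2) only when $\rho(B)\geq\rho(A)$. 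Whenever $\rho(B)<\rho(A)$ --- which can occur under either hypothesis, so your parenthetical that $\rho(B)>\rho(A)$ forces hypothesis (1) is also off --- you are thrown back on your third case.

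That third case, $\rho(B)<\rho(A)=n$, is where the genuine gap lies, and you half-acknowledge it: your entire argument there is the phrase ``an asymptotic-integration argument (in the spirit of Hille) \ldots produces growth of $f$ of hyper-order at least $n$,'' which is a black box, not a proof. Look at what your ingredients actually give: on a dominant ray, equation (\ref{sde}) gives $|A|\,|f'/f|\leq|f''/f|+|B|$, so your Gundersen bounds $|f^{(k)}/f|\leq\exp(r^{\sigma+\epsilon})$ produce only the \emph{upper} bound $|f'/f|\leq\exp\bigl(-\tfrac{1}{2}\delta r^{n}\bigr)$; no contradiction follows, because nothing prevents $|f'/f|$ from being that small on the ray (it vanishes at every zero of $f'$). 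What is missing is a pointwise bound on the wrong-way quotient $|f/f'|$, which Lemma [\ref{gunlem}] cannot supply since it requires $k>j$. This is exactly what the paper's Lemma [\ref{kwonlem}] (Kwon) provides, $|f(z)/f'(z)|\leq r$ for all large $r$, and with it the paper closes this case directly, with no contradiction hypothesis and no sectorial asymptotics: dividing (\ref{sde}) by $f'$ on a ray with $\delta(P,\theta)>0$ and taking $r\notin E_3$,
\[
\exp\{(1-\epsilon)\delta(P,\theta)r^{n}\}\leq|A(re^{i\theta})|\leq\left|\frac{f''(re^{i\theta})}{f'(re^{i\theta})}\right|+|B(re^{i\theta})|\left|\frac{f(re^{i\theta})}{f'(re^{i\theta})}\right|\leq crT(2r,f)^{2}+r\exp(r^{\beta}),
\]
and since $\beta<n$ this forces $T(2r,f)\geq\exp(cr^{n})$, hence $\rho_2(f)\geq n$; Theorem [\ref{wuthm}] then gives equality. (Note the direct version needs Gundersen's estimate in the form (\ref{ggguneq}), in terms of $T(2r,f)$, since $f$ has infinite order.) Without Kwon's lemma or a substitute (e.g.\ restricting to maximum-modulus points via Wiman--Valiron), your contradiction scheme cannot close, and building an honest Hille-type asymptotic integration for a solution known only to have infinite order would be a far heavier undertaking than the theorem requires.
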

 
In present work, our aim is to give conditions on $B(z)$ so that when $\rho(A)=\rho(B)$ then also the conclusion of Theorem [\ref{prethm}] and Theorem [\ref{hypthm}] holds true. In this regard, we have proved few results. 

\begin{thm}\label{lowthm}
 Suppose that $A(z)$ and $B(z)$ be transcendental entire functions satisfying $\lambda(A)<\rho(A)$ and $\mu(B)\neq \rho(A)$ then all non-trivial solutions $f(z)$ of the equations satisfies $\rho(f)=\infty$.
 \end{thm}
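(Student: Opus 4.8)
The plan is to split on the sign of $\mu(B)-\rho(A)$ and to treat the two cases very differently. If $\mu(B)>\rho(A)$, then $\rho(B)\ge\mu(B)>\rho(A)$, so $\rho(B)>\rho(A)$; were some nontrivial $f$ of finite order, Theorem \ref{finitgg} would force $T(r,B)\le T(r,A)+O(1)$ and hence $\rho(B)\le\rho(A)$, a contradiction. Thus in this case every nontrivial solution has infinite order and nothing more is needed. The whole difficulty lies in the case $\mu(B)<\rho(A)$, which I would handle by contradiction: assume some $f\not\equiv0$ has $\rho(f)=\rho<\infty$.

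First I would extract the geometry from $\lambda(A)<\rho(A)$. By the Hadamard factorization theorem write $A=g\,e^{P}$, where $g$ is entire with $\rho(g)=\lambda(A)$ and $P$ is a polynomial with $\deg P=\rho(A)=:n$, a positive integer. The $2n$ rays on which $\operatorname{Re}P$ changes sign cut the plane into $2n$ sectors of opening $\pi/n$; call $S^{-}_{1},\dots,S^{-}_{n}$ those on which $\operatorname{Re}P(re^{i\theta})\le-\delta r^{n}$ for some $\delta>0$ and large $r$, and $S^{+}_{1},\dots,S^{+}_{n}$ the complementary ones. Since $\rho(g)=\lambda(A)<n$ we have $|g(z)|\le e^{r^{\lambda(A)+\varepsilon}}$ for large $r$, so on each $S^{-}_{k}$ the coefficient is exponentially small: $|A(z)|\le e^{r^{\lambda(A)+\varepsilon}-\delta r^{n}}\to0$.

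Next I would turn the equation into an upper bound for $B$ on the $A$-small sectors. Rewriting (\ref{sde}) as $B=-f''/f-A\,(f'/f)$ and invoking Gundersen's estimates for logarithmic derivatives (valid, for each $\varepsilon>0$, on every ray $\arg z=\theta$ outside a set of $\theta$ of measure zero), on any such good ray lying in some $S^{-}_{k}$ one gets $|f''/f|\le r^{K}$ and $|f'/f|\le r^{K}$ for a constant $K=K(\rho)$ and all large $r$. Combined with $|A|\to0$, this yields $|B(re^{i\theta})|\le 2r^{K}$ on almost every ray of $\bigcup_{k}S^{-}_{k}$, so $B$ is polynomially bounded on $n$ sectors of opening $\pi/n$. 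Separately, $\mu(B)<n$ supplies a sequence $r_{j}\to\infty$ with $M(r_{j},B)\le e^{r_{j}^{s}}$, where $s=\mu(B)+\varepsilon<n$.

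The decisive step, which I expect to be the main obstacle, is to upgrade these two partial bounds into a global one. Fix a sector $S^{+}_{k}$ whose (good) boundary rays carry the bound $|B|\le Cr^{K}$, and apply the two-constants theorem (a Phragm\'en--Lindel\"of argument) on the truncated sector $S^{+}_{k}\cap\{|z|<r_{j}\}$: on the two radial edges $|B|\le Cr^{K}$, and on the outer arc $|B|\le e^{r_{j}^{s}}$. Because the harmonic measure of the outer arc, seen from a point of modulus $t$, decays like $(t/r_{j})^{n}$ while the arc datum is only $e^{r_{j}^{s}}$ with $s<n$, the arc contributes $\asymp t^{n}r_{j}^{\,s-n}\to0$ as $j\to\infty$; this is exactly where $\mu(B)<n$ enters, and it would fail for $\mu(B)=n$ (as $B=e^{z^{n}}$ shows). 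Letting $j\to\infty$ leaves only the radial contribution, which is $O(\log t)$, so $|B(z)|\le C't^{K'}$ throughout $S^{+}_{k}$. Hence $B$ is polynomially bounded on every sector, i.e. on all of the plane, so $B$ is a polynomial, contradicting the hypothesis that $B$ is transcendental. The care needed to choose good boundary rays arbitrarily close to the critical rays (so that the opening stays $\pi/n+o(1)$ and the harmonic-measure exponent stays above $s$), and to keep the exceptional sets from Gundersen's estimate compatible with the sequence $(r_{j})$, is the technical heart of the argument.
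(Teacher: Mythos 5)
Your proposal reaches the right conclusion, and in the decisive case $\mu(B)<\rho(A)$ it takes a genuinely different route from the paper. The shared part: like the paper, you dispose of $\mu(B)>\rho(A)$ via Theorem \ref{finitgg}, write $A=v e^{P}$ (the paper quotes Lemma \ref{implem} of Bank--Laine--Langley rather than redoing Hadamard), get exponential decay of $A$ on the sectors where $\delta(P,\theta)<0$, and use Gundersen's ray-wise estimates (Lemma \ref{gunlem}(i)) to obtain $|B(re^{i\theta})|\le Cr^{K}$ on almost every ray of those sectors. From there the paths diverge. The paper never tries to propagate this bound into the sectors where $A$ is large; instead it splits into three subcases according to $\mu(B)$ and in each one invokes a ready-made \emph{lower} bound for $|B|$ that already reaches into the $A$-small region: for $0<\mu(B)<\frac{1}{2}$, Lemma \ref{lowordB} gives $|B|>\exp(r^{\mu(B)-\epsilon})$ on \emph{all} rays along a sequence of radii; for $\mu(B)\ge\frac{1}{2}$, Lemma \ref{jlole} gives lower-order growth of $B$ on a sector of opening at least $\pi/\mu(B)>\pi/n$, which is too wide to avoid the $A$-small sectors; for $\mu(B)=0$, the $\cos\pi\alpha$-type Lemma \ref{loword} compares minimum and maximum modulus. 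Each subcase ends in a pointwise contradiction against $r^{2\rho(f)}(1+o(1))$. Your argument replaces all three subcases and their specialized lemmas by a single Phragm\'en--Lindel\"of/harmonic-measure step: polynomial bounds on good boundary rays plus $M(r_j,B)\le\exp(r_j^{\mu(B)+\epsilon})$ on a sequence of arcs force a polynomial bound across each $A$-large sector, because the harmonic-measure exponent (close to $n$ when the good rays are chosen close to the critical rays) beats $\mu(B)+\epsilon$; hence $B$ would be a polynomial, contradicting transcendence. This is a legitimate and arguably more unified proof, and it makes visible exactly where $\mu(B)\neq\rho(A)$ enters (your $e^{z^{n}}$ remark). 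The price is the Phragm\'en--Lindel\"of machinery (logarithmically growing boundary data, the puncture at the origin, harmonic measure in truncated sectors), which the paper avoids entirely by citation.

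One step needs patching before your final sentence is justified. You run the harmonic-measure argument only on the $A$-large sectors $S_k^{+}$ and then assert that $B$ is polynomially bounded ``on every sector''. But on the $A$-small sectors your information is $|B(re^{i\theta})|\le 2r^{K}$ for almost every $\theta$ and only for $r\ge R_{0}(\theta)$, where $R_{0}(\theta)$ is the ray-dependent threshold in Gundersen's lemma; since $R_{0}(\theta)$ is not uniform in $\theta$, an a.e.-ray bound does not give a locally uniform bound on the two-dimensional sector, and your enlarged $S_k^{+}$-sectors (total opening roughly $\pi$) do not cover the plane. The fix is immediate: apply the identical truncated-sector argument to sub-sectors of each $S_k^{-}$ bounded by two good rays chosen near its edges; there the opening is less than $\pi/n$, so the harmonic-measure exponent exceeds $n>\mu(B)+\epsilon$ and the same polynomial bound follows, after which the two families of sectors cover $\mathbb{C}$ and $B$ is indeed a polynomial. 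Finally, note that your Hadamard factorization presupposes $\rho(A)<\infty$; the paper disposes of the case $\rho(A)=\infty$ separately at the outset, and your write-up should do the same.
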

\begin{cor}\label{corlo}
The conclusion of the above theorem holds true if $\mu(A)\neq \mu(B)$. 
\end{cor}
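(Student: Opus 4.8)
The plan is to deduce this corollary directly from Theorem \ref{lowthm} by showing that the standing hypothesis $\lambda(A)<\rho(A)$ already forces the lower order of $A$ to coincide with its order, that is, $\mu(A)=\rho(A)$. Granting this equality, if $\mu(A)\neq\mu(B)$ then $\mu(B)\neq\mu(A)=\rho(A)$, so the condition $\mu(B)\neq\rho(A)$ required in Theorem \ref{lowthm} is satisfied; since $A$ and $B$ are transcendental entire with $\lambda(A)<\rho(A)$, that theorem applies verbatim and yields $\rho(f)=\infty$ for every non-trivial solution $f$. Thus the whole task reduces to establishing $\mu(A)=\rho(A)$.

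To prove this equality I would begin from the Hadamard factorization. Since $A$ is a transcendental entire function of finite order with $\lambda(A)<\rho(A)$, we may write $A(z)=P(z)e^{Q(z)}$, where $P$ is the canonical product formed from the zeros of $A$, so that $\rho(P)=\lambda(A)$, and $Q$ is a polynomial. Because $\lambda(A)<\rho(A)$, the order is governed by the exponential factor, whence $\rho(A)=\deg Q=:n$, necessarily a positive integer. The inequality $\mu(A)\le\rho(A)=n$ is automatic, so the heart of the matter is the reverse bound $\mu(A)\ge n$, which, being a $\liminf$, must be verified for \emph{every} sequence $r\to\infty$ rather than merely along one.

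For this lower bound I would combine a minimum-modulus estimate for $P$ with the radial growth of the exponential. Writing $Q(z)=a_nz^n+\cdots$, for each $r$ there is a point $z_r$ with $|z_r|=r$ at which $\operatorname{Re}Q(z_r)=|a_n|r^n+O(r^{n-1})$. By the standard minimum-modulus theorem, for any small $\epsilon>0$ there is an exceptional set $E\subset(1,\infty)$ of finite logarithmic measure such that $\log|P(z)|\ge -r^{\lambda(A)+\epsilon}$ for all $|z|=r\notin E$. Evaluating $\log|A|=\log|P|+\operatorname{Re}Q$ at $z_r$ for $r\notin E$ and using $\lambda(A)+\epsilon<n$ then gives $\log M(r,A)\ge\tfrac12|a_n|r^n$ for all large $r\notin E$.

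The main obstacle is that the set $E$, though thin, could a priori spoil the $\liminf$ defining $\mu(A)$; I would neutralise it by exploiting the monotonicity of $r\mapsto\log M(r,A)$. Given any large $R$, the interval $[R/2,R]$ has logarithmic measure $\log 2$, while the tail of $E$ has logarithmic measure tending to $0$, so there exists $r\in[R/2,R]\setminus E$; monotonicity then yields $\log M(R,A)\ge\log M(r,A)\ge\tfrac12|a_n|(R/2)^n$. Hence $\log M(R,A)\ge c\,R^{n}$ with $c=\tfrac12|a_n|2^{-n}>0$ for every large $R$, so that $\liminf_{R\to\infty}\frac{\log^+\log^+ M(R,A)}{\log R}\ge n$, i.e. $\mu(A)\ge n=\rho(A)$. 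Together with $\mu(A)\le\rho(A)$ this gives $\mu(A)=\rho(A)$ and completes the reduction to Theorem \ref{lowthm}.
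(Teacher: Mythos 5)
Your proposal is correct and is essentially the paper's own (implicit) argument: the corollary is stated without proof precisely because, under the standing hypothesis $\lambda(A)<\rho(A)$ with $\rho(A)<\infty$, the Hadamard factorization forces $\mu(A)=\rho(A)$, so the condition $\mu(A)\neq\mu(B)$ is identical to the condition $\mu(B)\neq\rho(A)$ of Theorem [\ref{lowthm}]; your minimum-modulus proof of this equality is sound, and the paper silently relies on the same equality in the corollary to Theorem [\ref{lowhyp}], where $\max\{\rho(A),\mu(B)\}$ becomes $\max\{\mu(A),\mu(B)\}$. The only loose end is that you assume $\rho(A)<\infty$ without comment (the factorization with polynomial exponent requires it); the paper's proof of Theorem [\ref{lowthm}] disposes of $\rho(A)=\infty$ separately, and in any case the half $\mu(A)<\mu(B)$ of the corollary follows for arbitrary $\rho(A)$ directly from Theorem [\ref{finitgg}], since a finite-order solution would give $T(r,B)\leq T(r,A)+O(1)$ and hence $\mu(B)\leq\mu(A)$, a contradiction.
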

For an entire function $f(z)$ the lower order of growth is defined as follows:
$$ \mu(f)= \liminf_{r \rightarrow \infty} \frac{\log^+ \log^+ M(r, f)}{\log r}=   \liminf_{r \rightarrow \infty} \frac{\log^+T(r, f)}{\log r}$$
In Theorem [\ref{lowthm}] and Corollary [\ref{corlo}],  the order of the coefficients $A(z)$ and $B(z)$ may be equal. The lower order of an entire function may be quite different from its order, for example there exists entire function $f$ with $\mu(f)=0$ and $\rho(f)>0$  or $\rho(f)=\infty$ for example see Goldberg and Ostroviskii \cite{aa} (page no. 238).

The theorem below presents the hyper-order of solutions of the differential equation satisfying the conditions of the Theorem [\ref{lowthm}].
\begin{thm}\label{lowhyp}
Suppose that $A(z)$ be an entire fucntion with finite order and $B(z)$ be a transcendental entire function with finite lower order satisfying the hypothesis of Theorem [\ref{lowthm}] then
$$\rho_2(f)=\max\{\ \rho(A),\mu(B)\}\ $$
for all non-constant solutions $f(z)$ of the equation (\ref{sde}).
\end{thm}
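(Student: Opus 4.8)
The plan is to prove the two matching bounds $\rho_2(f)\le\max\{\rho(A),\mu(B)\}$ and $\rho_2(f)\ge\max\{\rho(A),\mu(B)\}$ for every solution $f\not\equiv 0$, exploiting throughout the structural consequence of $\lambda(A)<\rho(A)$. Since $A$ has finite order, Hadamard's factorization gives $A=h\,e^{P}$, where $P$ is a polynomial and $h$ is the canonical product over the zeros of $A$ with $\rho(h)=\lambda(A)$. Because $\rho(A)=\max\{\deg P,\lambda(A)\}$ and $\lambda(A)<\rho(A)$, the polynomial $P$ has degree exactly $q:=\rho(A)$, a positive integer. Writing $P(z)=a_qz^{q}+\cdots$, the plane decomposes into $q$ open growth sectors, in which $\operatorname{Re}P(z)\ge \delta r^{q}$ so that $|A(z)|\ge \exp(\delta r^{q})$, and $q$ decay sectors, in which $\operatorname{Re}P(z)\le -\delta r^{q}$ so that $|A(z)|\le \exp(-\delta r^{q})$. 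The two analytic tools I would use are Gundersen's logarithmic-derivative estimates, bounding $|f^{(k)}(z)/f(z)|$ in terms of $T(r,f)$ for $|z|=r$ outside a set of finite logarithmic measure, and the Wiman--Valiron relation $f^{(k)}(z)/f(z)=(\nu(r)/z)^{k}(1+o(1))$ at a maximum-modulus point, together with $\rho_2(f)=\limsup_{r\to\infty}\log\log\nu(r)/\log r$.

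For the lower bound I would distinguish the two cases prescribed by $\mu(B)\ne\rho(A)$. When $\mu(B)<\rho(A)$, so that $\max\{\rho(A),\mu(B)\}=\rho(A)$, I argue in a growth sector along a sequence of radii $r_n\to\infty$ on which the definition of lower order keeps $B$ comparatively small, $\log M(r_n,B)\le r_n^{\mu(B)+\varepsilon}$ with $\mu(B)+\varepsilon<q$: rewriting (\ref{sde}) as $A=-f''/f'-B\,f/f'$ and assuming $\rho_2(f)<q$ for contradiction, Gundersen's estimates keep the right-hand side below $\exp(\delta r_n^{q})$, whereas $|A(z)|\ge\exp(\delta r_n^{q})$ in the sector, which is impossible; hence $\rho_2(f)\ge q$. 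When $\mu(B)>\rho(A)$, so that the maximum is $\mu(B)$, I work in a decay sector, where $A$ is exponentially small and (\ref{sde}) is governed by $f''+Bf\approx 0$, giving $|B(z)|\le |f''(z)/f(z)|+|A(z)||f'(z)/f(z)|$. The decisive feature here is that the lower order bounds $B$ from below on \emph{every} large circle: for each $\varepsilon>0$ one has $\log M(r,B)\ge r^{\mu(B)-\varepsilon}$ for all large $r$. Feeding this into the previous inequality forces $\log|f''(z)/f(z)|\ge r^{\mu(B)-\varepsilon}$ along a sequence of radii tending to infinity, whence $\rho_2(f)\ge\mu(B)$.

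For the upper bound I would start from (\ref{sde}) written as $B=-f''/f-A\,f'/f$; combining Gundersen's estimates with the factorization $A=h\,e^{P}$ to control $|f^{(k)}(z)/f(z)|$ by a fixed power of $T(r,f)$, and evaluating the Wiman--Valiron relation at the maximum-modulus point, yields $\nu(r)\le C\,(r\,M(r,A)+r\,M(r,B)^{1/2})$ off an exceptional set, and hence the coarse estimate $\rho_2(f)\le\max\{\rho(A),\rho(B)\}$. The main obstacle, and the heart of the theorem, is to replace $\rho(B)$ by $\mu(B)$: the maximum-modulus points are governed by $f$ and cannot be steered onto the circles where $B$ is small, so the point estimates above are inadequate whenever $\rho(B)>\mu(B)$. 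The remedy I would pursue is to perform the estimate inside a decay sector of $A$, where the equation is controlled by $B$ alone, and then propagate the resulting bound across the adjacent growth sectors by a Phragm\'en--Lindel\"of argument, so that $T(r,f)$ is dominated by the minimal growth of $B$ uniformly in $r$, i.e.\ by $\mu(B)$. The same sectorial matching is what makes the decay-sector step of the lower bound rigorous, since there too one must pass from the maximum of $|B|$ on a circle to its values on the decay sector. Reconciling these sectorial global bounds with the lower bound of the previous paragraph then gives $\rho_2(f)=\max\{\rho(A),\mu(B)\}$.
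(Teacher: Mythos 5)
Your lower bound in the case $\mu(B)<\rho(A)$ is essentially the paper's argument: in a sector where $\delta(P,\theta)>0$ one has $|A(re^{\iota\theta})|\geq\exp\left((1-\epsilon)\delta(P,\theta)r^{n}\right)$ by Lemma [\ref{implem}], while $M(r,B)\leq\exp(r^{\eta})$, $\mu(B)<\eta<\rho(A)$, along a sequence of radii, and writing $A=-f''/f'-Bf/f'$ forces $\rho_2(f)\geq\rho(A)$. (One repair: Gundersen's estimates only cover $f^{(k)}/f^{(j)}$ with $k>j\geq 0$, so they do not bound $|f/f'|$; for that you need the elementary bound $|f(z)/f'(z)|\leq r$ of Lemma [\ref{kwonlem}], which is what the paper invokes.) But the other two pieces of your plan have genuine gaps. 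In the case $\rho(A)<\mu(B)$, your lower bound uses $\log M(r,B)\geq r^{\mu(B)-\epsilon}$, yet in the inequality $|B(z)|\leq|f''/f|+|A||f'/f|$ you need $|B|$ to be large at the particular points $z$ lying in a decay sector of $A$, and the maximum-modulus point of $B$ on the circle $|z|=r$ need not lie in that sector. You notice this yourself and defer it to a ``sectorial matching'' by Phragm\'en--Lindel\"of that is never carried out. The paper sidesteps the issue entirely: $\rho(A)<\mu(B)\leq\rho(B)$ puts the coefficients in case (i) of Kwon's Theorem [\ref{kwonthm}], which immediately gives $\rho_2(f)\geq\max\{\rho(A),\rho(B)\}\geq\mu(B)$.

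More seriously, your upper bound never becomes a proof. The obstacle you describe --- that the Wiman--Valiron maximum-modulus points of $f$ ``cannot be steered onto the circles where $B$ is small'' --- is illusory, so the Phragm\'en--Lindel\"of propagation you propose as a remedy is both unnecessary and unsubstantiated. The smallness of $B$ coming from its lower order is a statement about entire circles, not about directions: if $\mu(B)<\eta$, then $M(r,B)\leq\exp(r^{\eta})$ holds for $r$ in an unbounded set, and since $M(\cdot,B)$ is increasing this set contains intervals of the form $[r_0/2,r_0]$ with $r_0$ arbitrarily large, each of logarithmic measure $\log 2$; such intervals cannot be swallowed by the finite-logarithmic-measure exceptional set of Wiman--Valiron. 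This is precisely the paper's Lemma [\ref{ourlem}]: at a maximum-modulus point of $f$ with $|z|=r$ chosen in this set and outside the exceptional set, Theorem [\ref{wim}] turns the equation into
\begin{equation*}
\left(\frac{v(r,f)}{z}\right)^{2}|1+o(1)|\leq \exp\left(r^{\rho+\epsilon}\right)\left(\frac{v(r,f)}{z}\right)|1+o(1)|+\exp\left(r^{\rho+\epsilon}\right),
\end{equation*}
with $\rho=\max\{\rho(A),\mu(B)\}$ (taking $\rho\geq\rho(A)$ here even avoids any issue of making $A$ and $B$ small simultaneously), and Lemma [\ref{indexlem}] then yields $\rho_2(f)\leq\rho$. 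Without this step, or a completed substitute, you are left only with the coarse bound $\rho_2(f)\leq\max\{\rho(A),\rho(B)\}$ of Theorem [\ref{wuthm}], which does not prove the theorem whenever $\mu(B)<\rho(B)$.
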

\begin{cor}
Suppose that $A(z)$ and $B(z)$ be entire functions of finite lower order such that $\lambda(A)<\rho(A)$ and $\mu(A)\neq \mu(B)$ then 
$$\rho_2(f)=\max\{\ \mu(A), \mu(B)\}\ .$$
for all non-constant solutions $f(z)$ of the equation (\ref{sde}).
\end{cor}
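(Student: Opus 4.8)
The plan is to reduce the statement to Theorem \ref{lowhyp} by using the hypothesis $\lambda(A)<\rho(A)$ to pin down the growth of $A$ completely. First I would invoke Hadamard's factorization theorem to write $A(z)=\Pi(z)e^{P(z)}$, where $\Pi$ is the canonical product formed from the zeros of $A$ and $P$ is a polynomial of degree $q:=\rho(A)$; the assumption $\lambda(A)<\rho(A)$ gives $\rho(\Pi)=\lambda(A)<q$. In particular $q$ is a positive integer and $A$ is transcendental, since a polynomial cannot satisfy $\lambda(A)<\rho(A)$.

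The decisive observation is that this representation forces $\mu(A)=\rho(A)=q$. Indeed $\mu(e^{P})=\rho(e^{P})=\deg P=q$, while $T(r,\Pi)=O(r^{\lambda(A)+\varepsilon})=o(r^{q})$. Combining the two-sided estimate $T(r,e^{P})-T(r,\Pi)-O(1)\le T(r,A)\le T(r,\Pi)+T(r,e^{P})+O(1)$ with $T(r,e^{P})\sim c\,r^{q}$ yields $T(r,A)=(1+o(1))\,T(r,e^{P})\sim c\,r^{q}$, so that $\log T(r,A)/\log r\to q$ and therefore $\mu(A)=\rho(A)=q$. This is precisely the identity already implicit in Corollary \ref{corlo}.

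With $\mu(A)=\rho(A)$ in hand, the hypotheses of Theorem \ref{lowhyp} follow immediately: $A$ is of finite order, $B$ is a transcendental entire function of finite lower order, and the assumption $\mu(A)\neq\mu(B)$ is literally $\mu(B)\neq\rho(A)$, which is the condition that Theorem \ref{lowhyp} imports from Theorem \ref{lowthm}. Applying Theorem \ref{lowhyp} to each non-constant solution $f$ then gives
\[
\rho_{2}(f)=\max\{\rho(A),\mu(B)\}=\max\{\mu(A),\mu(B)\},
\]
the second equality being nothing but $\rho(A)=\mu(A)$. This establishes the corollary.

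The step I expect to require the most care is the identification $\mu(A)=\rho(A)$, together with the confirmation that we are genuinely in the finite-order regime: the hypothesis grants $A$ only finite lower order, so one must really use $\lambda(A)<\rho(A)$ to force $P$ to be a polynomial (equivalently $\rho(A)<\infty$) before the identity $\mu(A)=\rho(A)$ is available; without finite order the value $\max\{\mu(A),\mu(B)\}$ would no longer match the hyper-order and Theorem \ref{lowhyp} could not be invoked. A secondary point is that, exactly as in Theorem \ref{lowhyp}, we read $B$ as transcendental, so that $\mu(B)$ genuinely records growth and the cited theorem applies verbatim.
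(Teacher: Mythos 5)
Your reduction is the same one the paper intends: this corollary is stated without a separate proof, as an immediate consequence of Theorem \ref{lowhyp} (just as Corollary \ref{corlo} is deduced from Theorem \ref{lowthm}), and the bridge in both cases is the identity $\mu(A)=\rho(A)$ forced by $\lambda(A)<\rho(A)$, which converts $\mu(A)\neq\mu(B)$ into $\mu(B)\neq\rho(A)$ and $\max\{\rho(A),\mu(B)\}$ into $\max\{\mu(A),\mu(B)\}$. Your Nevanlinna-theoretic verification that $T(r,A)=(1+o(1))T(r,e^{P})\sim c\,r^{q}$, and your remark that $B$ must be read as transcendental exactly as in Theorem \ref{lowthm} (otherwise the statement fails: Frei's equation $f''+e^{-z}f'-n^{2}f=0$ has a finite-order solution although $\mu(A)=1\neq 0=\mu(B)$), are both correct.

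The one genuine flaw is in the step you yourself flag as critical. It is not true that $\lambda(A)<\rho(A)$ ``forces $P$ to be a polynomial (equivalently $\rho(A)<\infty$)'': the function $A(z)=e^{e^{z}}$ has $\lambda(A)=0<\rho(A)=\infty$, so the strict inequality alone proves nothing about finiteness of the order. What rescues the argument is the hypothesis you treat as a bystander, namely $\mu(A)<\infty$. Since $\lambda(A)<\rho(A)$ does force $\lambda(A)<\infty$, one may write $A=\Pi e^{g}$ with $\Pi$ the canonical product over the zeros of $A$ and $g$ entire; if $g$ were transcendental, Borel--Carath\'eodory gives $\log M(2r,e^{g})\geq\tfrac{1}{2}M(r,g)-O(1)$, so $e^{g}$, and hence $A$ itself (because $T(r,\Pi)=O(r^{\lambda(A)+\varepsilon})$ is of finite order), would have infinite \emph{lower} order, contradicting $\mu(A)<\infty$. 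Only now does Hadamard's theorem give that $g$ is a polynomial, $\rho(A)=\deg g<\infty$, after which your computation of $\mu(A)=\rho(A)$ and the application of Theorem \ref{lowhyp} go through verbatim. So your proposal is repairable within its own framework, but as written the finiteness of $\rho(A)$ --- without which Theorem \ref{lowhyp} cannot be invoked and the claimed equality of hyper-orders need not hold --- rests on a false equivalence rather than on the finite lower order hypothesis that actually does the work.
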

\begin{thm}\label{bordi}
Suppose that $A(z)$ be an entire function with $\lambda(A)<\rho(A)$ and $B(z)$ be an entire function extremal to Yang's inequality such that no Borel direction of $B(z)$ coincides with any of the critical rays of $A(z)$. Then all non-trivial solutions $f(z)$ of the equation (\ref{sde}) satisfies $\rho(f)=\infty.$
\end{thm}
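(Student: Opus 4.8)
\emph{Proof proposal.} The plan is to argue by contradiction: assume that (\ref{sde}) admits a non-trivial solution $f$ with $\rho(f)<\infty$, and then exhibit a single ray along which the two sides of the identity $B=-f''/f-A\,f'/f$ have incompatible growth. First I would exploit $\lambda(A)<\rho(A)$. Since the exponent of convergence of the zeros is strictly smaller than the (necessarily finite, for the critical rays to be defined) order, Hadamard factorization gives $A(z)=v(z)e^{P(z)}$ with $P$ a polynomial of degree $\deg P=\rho(A)=:n$ and $v$ the canonical product over the zeros of $A$, $\rho(v)=\lambda(A)<n$. The critical rays of $A$ are precisely the $2n$ rays $\arg z=\theta$ on which $\operatorname{Re}P(re^{i\theta})$ changes sign; they split the plane into $2n$ sectors on which $A$ alternately grows and decays, and on the interior of each decay sector $|A(z)|\le\exp(-c\,r^{n})$ for some $c>0$ (the sub-dominant factor $v$ being absorbed, as $\rho(v)<n$). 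Let $D$ denote the union of the open decay sectors; its angular measure is $\pi$.

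Next I would bring in the structure of a function $B$ extremal for Yang's inequality. For such $B$ one has $\rho(B)>0$ and, writing $q$ for the number of its finite deficient values and $2q$ for the number of its Borel directions, the Borel directions cut the plane into $2q$ sectors on which $B$ alternately tends to a deficient value (hence is bounded) and grows with $\log|B(re^{i\theta})|\sim C(\theta)\,r^{\rho(B)}$, $C(\theta)>0$, on the interior of each growth sector. Let $G$ be the union of the open growth sectors, again of angular measure $\pi$. The decisive use of the hypothesis is then a measure--topology observation: if the open sets $D$ and $G$, each of measure $\pi$ in the circle of total measure $2\pi$, were disjoint, their closures would tile the circle, and every boundary ray of $G$ (a Borel direction of $B$) would have to coincide with a boundary ray of $D$ (a critical ray of $A$) --- exactly what the assumption forbids. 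Hence $D\cap G$ is a non-empty open arc, and I may fix a direction $\arg z=\theta_0$ lying simultaneously in a decay sector of $A$ and in the interior of a growth sector of $B$.

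Finally I would play the two estimates against each other along $\arg z=\theta_0$. By Gundersen's sharpened estimate for logarithmic derivatives of a finite-order meromorphic function, there is a set $E\subset[0,2\pi)$ of linear measure zero such that, for $\theta\notin E$, one has $|f^{(k)}(re^{i\theta})/f(re^{i\theta})|\le r^{\,k(\rho(f)-1)+\varepsilon}$ for all large $r$; since $D\cap G$ has positive measure I may take $\theta_0\notin E$. Substituting into $B=-f''/f-A\,f'/f$ and using $|A(re^{i\theta_0})|\le\exp(-c\,r^{n})\to0$ yields $|B(re^{i\theta_0})|\le r^{\,2\rho(f)}$ for all large $r$, so $B$ is polynomially bounded on this ray. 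This contradicts $\log|B(re^{i\theta_0})|\sim C(\theta_0)\,r^{\rho(B)}$ with $C(\theta_0)>0$, whence $\rho(f)=\infty$.

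The main obstacle is the middle step: making rigorous the sectorial description of a $B$ extremal for Yang's inequality --- in particular that its growth sectors contain interior directions on which $\log|B|$ is of exact order $r^{\rho(B)}$, and that they occupy angular measure exactly $\pi$ --- and then verifying that the Borel-direction avoidance hypothesis genuinely forces $D\cap G\neq\emptyset$. Should the growth set $G$ have measure other than $\pi$, the clean complementarity argument must be replaced by a more careful accounting of how the Borel directions of $B$ interlace the critical rays of $A$; this is the point at which the precise extremal structure underlying Yang's inequality has to be invoked rather than merely assumed.
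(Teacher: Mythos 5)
There is a genuine gap, and it is exactly the one you flag at the end as ``the main obstacle'': your argument produces a contradiction only along a ray where $B$ is large while $A$ decays, i.e.\ only when $D\cap G\neq\emptyset$, and your proof that this intersection is non-empty rests on the unproven claim that the growth sectors of a function extremal to Yang's inequality occupy total angular measure exactly $\pi$ (equivalently, that $B$ has exactly $2\rho(B)$ Borel directions, with the growth and deficient-value sectors alternating and all of width $\pi/\rho(B)$). The structure theory actually available for Yang-extremal functions (Lemma [\ref{defic}], Lemma [\ref{expinc}], Lemma [\ref{expincr}]) gives much less: in the deficient sectors $\log\frac{1}{|B-b_i|}>CT(|z|,B)$; a sector \emph{containing a maximal-growth ray} has width exactly $\pi/\rho(B)$; and the Yang--Zhang two-ray lemma. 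Nothing forces the number of Borel directions to equal $2\rho(B)$, so the configuration $D\cap G=\emptyset$ (every decay sector of $A$ trapped inside a deficient-value sector of $B$) cannot be ruled out by your measure-and-tiling argument, and your proposal says nothing in that configuration. A secondary gap in the favorable case: you assert $\log|B(re^{i\theta})|\sim C(\theta)r^{\rho(B)}$ on \emph{every} interior ray of a growth sector, which again is not what the lemmas provide; the rigorous route is to use Lemma [\ref{expincr}] on two rays $\psi_1<\Phi<\psi_2$ flanking a Borel direction $\Phi$ that lies inside a decay sector of $A$, noting that the maximal-growth ray it produces cannot lie in a deficient sector where $B$ is bounded.

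The paper's proof shows how the missing case must be handled, by a second, different mechanism. It first reduces to $\rho(A)=\rho(B)=n$ via Theorem [\ref{prethm}], and then splits on whether some Borel direction of $B$ lies inside a decay sector $(\theta_i,\phi_{i+1})$ of $A$. If yes, it argues essentially as you intend (via Lemma [\ref{expincr}], as just described). If no, then each decay sector of $A$ lies inside some deficient-value sector $\Omega_{2j-1}(\Phi_{2j-1},\Phi_{2j})$ of $B$; since the hypothesis forbids coincidence of a Borel direction with a critical ray, one can choose a ray $\theta\in E^{+}\cap\Omega_{2j-1}$, i.e.\ a ray on which $A$ \emph{grows}, still inside the deficient sector of $B$. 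Along that ray the equation is used in the opposite direction: combining $\exp\{(1-\epsilon)\delta(P,\theta)r^n\}\leq|A(re^{i\theta})|\leq|f''/f'|+|B|\,|f/f'|$ with Kwon's Lemma [\ref{kwonlem}] ($|f/f'|\leq r$), Gundersen's estimates, and the bound $|B|\leq\exp\{-CT(r,B)\}+|b_{2j-1}|$ from (\ref{eqBde}) yields $\exp\{(1-\epsilon)\delta(P,\theta)r^n\}\leq r^{2\rho(f)}(1+|b_{2j-1}|+o(1))$, a contradiction. This deficient-value mechanism, with the equation solved for $A$ rather than $B$, is entirely absent from your proposal, and without it (or without a genuine proof that the growth sectors of $B$ fill angle $\pi$) the argument is incomplete.
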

Here is an illustrative example for above theorem:
\begin{example}
The differential equation
\begin{equation*}
f''+e^{\iota z}f'+e^zf=0
\end{equation*}
has all non-trivial solutions of infinite order by Theorem [\ref{bordi}].
\end{example}
\begin{thm}\label{bordihy}
Suppose that $A(z)$ be an entire function with finite order and $B(z)$ be an entire function extremal to Yang's inequality such that hypothesis of the Theorem [\ref{bordi}] satisfied then
 $$\rho_2(f)=\max\{\ \rho(A),\rho(B)\}\ $$
for all non-trivial solutions $f(z)$ of the equation (\ref{sde}).
\end{thm}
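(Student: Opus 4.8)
The plan is to establish the hyper-order of solutions for the differential equation (\ref{sde}) under the hypotheses of Theorem [\ref{bordi}], combining an upper bound valid for any finite-order coefficients with a matching lower bound obtained from the already-proven infinitude of the order together with the structure of extremal functions. First I would observe that by Theorem [\ref{bordi}] all non-trivial solutions $f$ satisfy $\rho(f)=\infty$, so the hyper-order is the correct refined growth measure to compute. The upper bound $\rho_2(f)\leq\max\{\rho(A),\rho(B)\}$ should follow from a standard argument: from equation (\ref{sde}) we write $B=-f''/f-A f'/f$, and applying the logarithmic derivative estimates together with the Wiman--Valiron theory (or the known general bound that every solution of a second order linear equation with entire coefficients of finite order $\rho=\max\{\rho(A),\rho(B)\}$ satisfies $\rho_2(f)\leq\rho$) yields the inequality directly. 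This part is routine and I expect to cite a standard lemma rather than reprove it.

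The substance lies in the lower bound $\rho_2(f)\geq\max\{\rho(A),\rho(B)\}$. I would split into showing $\rho_2(f)\geq\rho(A)$ and $\rho_2(f)\geq\rho(B)$ separately. For the $\rho(A)$ half, since $\lambda(A)<\rho(A)$, the function $A$ has a well-understood zero distribution and admits a representation that produces sectors (the critical rays) in which $|A|$ dominates; the idea is to exploit, along a ray where $A$ grows maximally, the relation forcing $f$ to grow at least as fast as a function of hyper-order $\rho(A)$. For the $\rho(B)$ half, the hypothesis that $B$ is extremal to Yang's inequality is essential: extremality to Yang's inequality means $B$ has exactly the minimal number of Borel directions relative to its deficient values (Yang's inequality relates the number of finite deficient values to the number of Borel directions), and this rigid structure guarantees that along suitable Borel directions of $B$, the modulus $|B|$ attains its maximal growth $\exp(r^{\rho(B)})$. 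The crucial geometric input from the hypothesis is that no Borel direction of $B$ coincides with a critical ray of $A$, so on a ray where $B$ is large, $A$ is correspondingly controlled (small), letting the $B$-term dominate in (\ref{sde}).

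The key estimate will then be to localize to a ray $\theta_0$ that is a Borel direction of $B$ but lies strictly away from every critical ray of $A$. On such a ray, $|A(re^{i\theta_0})|$ is bounded by the smaller exponential growth coming from the sectors complementary to the critical rays (this is where $\lambda(A)<\rho(A)$ is used, via the standard result that outside the critical-ray sectors $A$ decays or grows more slowly). Combining this with the lower bound $|B(re^{i\theta_0})|\geq\exp(r^{\rho(B)-\varepsilon})$ along a sequence $r\to\infty$, and feeding both into the rearranged equation $-B=f''/f+A f'/f$, I would apply the standard estimates on logarithmic derivatives $|f^{(k)}/f|\leq r^{M}[T(2r,f)]^{k}$ (valid outside a set of finite measure) to deduce $\exp(r^{\rho(B)-\varepsilon})\leq r^{M}[T(2r,f)]^{2}$, which upon taking iterated logarithms gives $\rho_2(f)\geq\rho(B)$. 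The analogous chain on a critical ray of $A$ (where $A$ dominates) gives $\rho_2(f)\geq\rho(A)$.

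The main obstacle I anticipate is the careful control of $|A|$ on the Borel directions of $B$: I must rule out the possibility that $A$ grows nearly maximally on such a ray, since otherwise the two exponential terms could conspire to cancel and the lower bound would fail. The hypothesis that no Borel direction of $B$ meets a critical ray of $A$ is precisely what excludes this, but translating the geometric condition "$\theta_0$ avoids the critical rays" into a quantitative growth bound on $A(re^{i\theta_0})$ requires invoking the fine asymptotic behavior of $A$ in its non-critical sectors, which rests on $\lambda(A)<\rho(A)$. I expect the technical heart of the argument to be assembling these sectorial growth estimates and handling the exceptional sets of $r$ uniformly so that a single sequence $r_n\to\infty$ simultaneously realizes the lower bound for $B$ and the upper control for $A$ and for the logarithmic derivatives of $f$.
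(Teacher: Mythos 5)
Your upper bound via Theorem [\ref{wuthm}] and your use of Theorem [\ref{bordi}] to get $\rho(f)=\infty$ match the paper, but your lower bound has a genuine gap, rooted in two misreadings of the geometry. First, a Borel direction of $B$ is not a ray of maximal modulus growth: Borel directions concern the density of $a$-points in angular neighbourhoods, and $|B|$ may well be bounded along the Borel direction itself (for instance $1-e^{-z}$ is bounded on its Borel direction $\arg z=\pi/2$). The paper therefore never estimates $|B|$ on the Borel direction; it invokes Lemma [\ref{expincr}] (Yang--Zhang) to produce a nearby ray $\psi_2$ on which $\limsup_{r\to\infty}\log\log|B(re^{\iota\psi_2})|/\log r=\rho(B)$. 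Second, and fatally, ``away from the critical rays of $A$'' does not mean $|A|$ is small: the critical rays are where $\delta(P,\theta)=0$, and in the alternating sectors $A$ either decays or grows like $\exp\left((1-\epsilon)\delta(P,\theta)r^n\right)$ with $\delta(P,\theta)>0$. The hypothesis of Theorem [\ref{bordi}] only forbids a Borel direction of $B$ from \emph{coinciding} with a critical ray; it does not prevent a Borel direction from lying strictly inside a growth sector of $A$. In the paper's own example ($A=e^{\iota z}$, $B=e^{z}$), the Borel direction $\arg z=3\pi/2$ of $B$ lies inside the sector where $\delta(P,\theta)>0$, so on that ray both $|A|$ and $|B|$ are exponentially large, the term $|A||f'/f|$ can swallow the lower bound for $|B|$, and your key inequality $\exp(r^{\rho(B)-\varepsilon})\leq r^{M}[T(2r,f)]^{2}$ does not follow. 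For the same reason your claim that $\rho_2(f)\geq\rho(A)$ can be obtained ``on a critical ray of $A$ where $A$ dominates'' fails: on a critical ray $\delta(P,\theta)=0$, so Lemma [\ref{implem}] gives no exponential lower bound for $|A|$ there, and in any case you would still need $B$ under control on that same ray.

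What is missing is the dichotomy that is the heart of the paper's proof (inherited from Theorem [\ref{bordi}]). Since one reduces to $\rho(A)=\rho(B)=n$, a single bound $\rho_2(f)\geq n$ suffices, and it is proved by cases. Case 1: some Borel direction of $B$ lies inside a decay sector $(\theta_i,\phi_{i+1})$ of $A$; then Lemma [\ref{expincr}] gives a ray $\psi_2$ in that same decay sector with maximal growth of $B$, while (\ref{eq2le}) makes $|A(re^{\iota\psi_2})|$ tend to zero, and Gundersen's estimates yield $\exp(r^{n-\epsilon})\leq c\,r\,T(2r,f)^4(1+o(1))$. Case 2: no Borel direction of $B$ meets any decay sector of $A$; then, because the growth sectors of $B$ have opening exactly $\pi/n$ (Lemma [\ref{expinc}]) and no Borel direction coincides with a critical ray, each decay sector of $A$ sits strictly inside a deficient-value sector of $B$, where Lemma [\ref{defic}] forces $|B(z)-b_{2j-1}|$ to be exponentially small, hence $|B|$ bounded; one then chooses a ray in that sector with $\delta(P,\theta)>0$, so $|A|$ grows like $\exp\left((1-\epsilon)\delta(P,\theta)r^n\right)$ while $B$ stays bounded, and reads the lower bound off $|A|\leq|f''/f'|+|B||f/f'|$. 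Note that extremality to Yang's inequality is genuinely used only here, through Lemma [\ref{defic}]; in your sketch it enters only through the false ``maximal growth along Borel directions'' claim, so the actual role of the extremality hypothesis is lost.
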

The follwoing theorem is motivated from Theorem [1.6] of  Wu et.al.  \cite{wu} where coefficients of equation (\ref{sde}) satisfies $\rho(A)\neq\rho(B)$.
\begin{thm}\label{denth1}
Suppose that $A(z)$ be an entire function with $\lambda(A)<\rho(A)$ and $B(z)$ be an entire function extremal to Denjoy's conjecture then all non-trivial solutions $f$ of the equation (\ref{sde}) satisfies
$$ \rho(f)=\infty.$$
\end{thm}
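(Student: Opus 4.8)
The plan is to argue by contradiction: suppose equation (\ref{sde}) admits a non-trivial solution $f$ with $\rho(f)=\sigma<\infty$. Since $B$ is extremal to Denjoy's conjecture it is of finite order $\mu:=\rho(B)$ with $2\mu$ a positive integer, so $\mu\ge \tfrac12>0$; by Theorem \ref{finitgg} we also have $\mu=\rho(B)\le\rho(A)$. Working in the principal case $\rho(A)<\infty$, the hypothesis $\lambda(A)<\rho(A)$ and Hadamard's factorisation theorem give $A(z)=\pi(z)e^{P(z)}$, where $P$ is a polynomial of degree $q=\rho(A)$ and $\pi$ is the canonical product over the zeros of $A$ with $\rho(\pi)=\lambda(A)<q$. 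The $2q$ critical rays of $A$ are the directions $\theta$ with $\cos(q\theta+\arg a_q)=0$, where $a_q$ is the leading coefficient of $P$; they bound $q$ open ``decay sectors'' on which $\mathrm{Re}\,P(re^{i\theta})\le-\delta r^{q}$ for some $\delta>0$ and all large $r$.

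First I would record the decay of $A$ on these sectors. Because the maximum modulus of the canonical product satisfies $\log M(r,\pi)\le r^{\lambda(A)+\varepsilon}$ for large $r$, one gets, uniformly on any ray $\arg z=\theta_0$ interior to a decay sector, the bound $\log|A(re^{i\theta_0})|\le r^{\lambda(A)+\varepsilon}-\delta r^{q}$, which tends to $-\infty$ since $\lambda(A)<q$; in particular $|A(re^{i\theta_0})|\le 1$ for all large $r$, with no exceptional set needed. Next I would invoke the structural description of functions extremal to Denjoy's conjecture: such a $B$ has exactly $2\mu$ asymptotic tracts along which it tends to its $2\mu$ finite asymptotic values, and extremality forces the complementary sectors to attain the minimal Phragm\'en--Lindel\"of opening $\pi/\mu$. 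Hence the plane is covered, up to $2\mu$ rays, by $2\mu$ ``growth sectors'' in each of which $\log|B(re^{i\theta})|\ge d\,r^{\mu}$ for some $d>0$ and all large $r$.

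The decisive observation is then a measure count. The decay sectors of $A$ have total angular measure $q\cdot(\pi/q)=\pi>0$, while the growth region of $B$ omits only the $2\mu$ asymptotic rays, so their intersection has positive measure. I would therefore fix a direction $\theta_0$ lying strictly inside a decay sector of $A$ and strictly inside a growth sector of $B$, and also outside the measure-zero exceptional set $E$ supplied by Gundersen's logarithmic derivative estimate, so that $|f^{(k)}(re^{i\theta_0})/f(re^{i\theta_0})|\le r^{k(\sigma-1+\varepsilon)}$ for $k=1,2$ and all large $r$. Rewriting (\ref{sde}) as $B=-f''/f-A\,(f'/f)$ and estimating along $\arg z=\theta_0$ gives $|B(re^{i\theta_0})|\le r^{2(\sigma-1+\varepsilon)}+r^{\sigma-1+\varepsilon}$, so that $\log|B(re^{i\theta_0})|=O(\log r)$. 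This contradicts $\log|B(re^{i\theta_0})|\ge d\,r^{\mu}$ with $\mu>0$, and the contradiction forces $\rho(f)=\infty$.

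The main obstacle is the second step: pinning down and justifying the sectoral growth structure of a Denjoy-extremal function, namely that the extremality $p=2\rho(B)$ compels the growth sectors to exhaust the plane up to finitely many rays while carrying a genuine lower bound $\log|B|\ge d\,r^{\mu}$ there; everything else (Hadamard factorisation, the crude decay bound for $A$, the measure-theoretic choice of $\theta_0$, and Gundersen's estimate) then combines routinely. A secondary point to treat with care is the case $\rho(A)=\infty$, where the critical-ray decomposition breaks down: there one must either supply a substitute for the decay sectors of $A=\pi e^{g}$ with $g$ transcendental, or simply adopt the standing finiteness assumption on $\rho(A)$ used in the companion Theorem \ref{bordihy}.
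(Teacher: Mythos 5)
Your overall strategy (argue by contradiction along a ray where $A$ decays and $B$ grows, with Gundersen's logarithmic derivative estimate closing the argument) is the same in spirit as the paper's, but the step you yourself flag as ``the main obstacle'' is a genuine gap, and it is exactly the point where the paper does something you do not. Your measure count needs the assertion that a Denjoy-extremal $B$ has growth sectors, with $\log|B(re^{i\theta})|\ge d\,r^{\mu}$ along every interior ray, covering the plane up to only $2\mu$ exceptional rays. The result that is actually available (the paper's Lemma \ref{denlem}, from Zhang's book) is weaker in two respects: (i) it is a dichotomy --- a given direction $\theta$ either is a Borel direction of $B$, or admits a surrounding sector in which $\log\log|B(z)|/\log|z|\to\rho(B)$ --- and it says nothing about how many directions fall into the Borel alternative; (ii) even in the growth alternative, the lower bound holds only outside a planar exceptional set $E_5$, so your claim of a clean bound along every interior ray ``with no exceptional set needed'' is too strong (zeros of $B$ with small exponent of convergence can still lie inside growth regions, even on a chosen ray). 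Because of (i), your measure count can a priori fail: nothing you invoke excludes the possibility that every direction in a decay sector of $A$ is a Borel direction of $B$, in which case you have no admissible ray $\theta_0$ at all. Upgrading Lemma \ref{denlem} to your structure theorem (exactly $2\mu$ Borel directions, growth sectors of opening exactly $\pi/\mu$) is a nontrivial rigidity statement that you neither prove nor cite in a usable form.

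The paper closes precisely this hole with a two-case argument. If some Borel direction $\Phi$ of $B$ lies inside a decay sector $(\theta_i,\phi_{i+1})$ of $A$, it invokes Lemma \ref{expincr} (Yang--Zhang): a Borel direction inside a sector of opening less than $\pi/\rho(B)$ forces maximal growth, $\limsup_{r\to\infty}\log\log|B(re^{i\psi})|/\log r=\rho(B)$, along one of the bounding rays $\arg z=\psi$, which can be chosen inside the decay sector; this produces the contradiction with no structure theorem at all. Only when no Borel direction of $B$ meets the decay sector does it apply Lemma \ref{denlem} to a direction chosen there, carrying the exceptional set $E_5$ through the estimates. You should replace your unproven covering claim by this dichotomy. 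Two smaller points: the case $\rho(A)\neq\rho(B)$ --- which includes $\rho(A)=\infty$, where your Hadamard factorization with polynomial $P$ breaks down --- is disposed of in the paper by citing Theorem \ref{prethm}, so the finiteness restriction you propose to ``adopt'' costs nothing once that reduction is made first; and your preliminary remark that $\rho(B)\le\rho(A)$ via Theorem \ref{finitgg} is sound but becomes unnecessary after the same reduction, since one may then assume $\rho(A)=\rho(B)=n$ outright.
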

\begin{thm}\label{denth2}
Suppose that $A(z)$ and $B(z)$ be entire function of finite order satisfying the hypothesis of the above theorem then all non-trivial solutions $f$ of the equation (\ref{sde}) satisfies 
$$ \rho_2(f)=\max\{\ \rho(A),\rho(B)\}\ . $$
\end{thm}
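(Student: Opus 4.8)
The plan is to prove the two inequalities $\rho_2(f)\le \max\{\rho(A),\rho(B)\}$ and $\rho_2(f)\ge \max\{\rho(A),\rho(B)\}$ separately, noting that Theorem~[\ref{denth1}] already guarantees $\rho(f)=\infty$, so the hyper-order is the correct gauge of growth. For the upper bound I would invoke the standard fact that for the equation (\ref{sde}) with $A$ and $B$ entire of finite order every solution satisfies $\rho_2(f)\le\max\{\rho(A),\rho(B)\}$; this follows from the Wiman--Valiron and logarithmic-derivative estimates applied to $f''=-Af'-Bf$ and is independent of the Denjoy hypothesis (see, e.g., \cite{lainebook}). In particular it already yields $\rho_2(f)<\infty$, which legitimizes the pointwise estimates used below. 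Hence the whole content is the matching lower bound, which I would split according to whether $\rho(B)$ or $\rho(A)$ is larger.

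For $\rho_2(f)\ge\rho(B)$ I would exploit the regular growth forced by extremality in Denjoy's conjecture. Writing $\rho:=\rho(B)\ (\ge 1/2)$, extremality produces $2\rho$ sectors in each of which $B$ attains maximal growth; in particular, exactly as in the proof of Theorem~[\ref{denth1}], there is an open interval of directions $\theta$ and a constant $c>0$ with $\log|B(re^{i\theta})|\ge c\,r^{\rho}$ for all large $r$. Since $\lambda(A)<\rho(A)$, I would factor $A=\pi\, e^{P}$ with $\pi$ the canonical product of the zeros ($\rho(\pi)=\lambda(A)<\rho(A)$) and $\deg P=\rho(A)=:q$, so the critical rays $\Re P=0$ cut the plane into $2q$ alternating sectors, and on the sectors with $\Re P<0$ one has $|A(z)|\le \exp\!\bigl(r^{\lambda(A)+\varepsilon}-\delta r^{q}\bigr)\to 0$. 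The Denjoy growth sector can be taken inside such an $A$-small sector (this is precisely the geometric input of Theorem~[\ref{denth1}]). On a ray in this common sector I would combine the pointwise logarithmic-derivative estimate — for $\rho_2(f)=\sigma$ and any $\varepsilon>0$, $|f^{(k)}(z)/f(z)|\le \exp(r^{\sigma+\varepsilon})$ for $|z|=r$ outside a set of finite logarithmic measure — with the rewritten equation
\[
|B(z)|\le \Bigl|\frac{f''(z)}{f(z)}\Bigr|+|A(z)|\,\Bigl|\frac{f'(z)}{f(z)}\Bigr|\le 2\exp(r^{\sigma+\varepsilon}),
\]
using $|A(z)|\le 1$ there. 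Taking logarithms gives $c\,r^{\rho}\le r^{\sigma+\varepsilon}+\log 2$, whence $\rho(B)\le\sigma+\varepsilon$ and, letting $\varepsilon\to0$, $\rho_2(f)\ge\rho(B)$.

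For $\rho_2(f)\ge\rho(A)$ there is nothing to prove when $\rho(A)\le\rho(B)$, so I would assume $\rho(B)<\rho(A)=q$ and work in a sector where $\Re P>0$, on which $|A(z)|\ge\exp(c_1 r^{q})$ (away from the zeros of $\pi$) while $|B(z)|\le\exp(r^{\rho(B)+\varepsilon})=o(|A(z)|)$. Here I would reuse the $A$-dominant argument already carried out for Theorem~[\ref{hypthm}]: asymptotic integration of (\ref{sde}) in the sector splits a fundamental system into a recessive solution behaving like $\exp(-\int A)$ and a dominant one, and on passing to the complementary sector the two roles interchange, so that every non-trivial solution satisfies $\log\log|f(re^{i\theta})|\ge c_2 r^{q}$ along a suitable ray, i.e.\ $\rho_2(f)\ge q=\rho(A)$. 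Combining the two lower bounds with the upper bound yields $\rho_2(f)=\max\{\rho(A),\rho(B)\}$.

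The main obstacle I anticipate is the step $\rho_2(f)\ge\rho(A)$ in the regime $\rho(B)<\rho(A)$: controlling \emph{every} solution rather than only the recessive one requires tracking the dominant/recessive dichotomy across the $A$-large and $A$-small sectors, and verifying that the exceptional set arising from the zeros of $\pi$ (discs of small total measure, since $\lambda(A)<q$) and the exceptional set of the logarithmic-derivative estimate can be avoided simultaneously along the chosen rays. By contrast, the $\rho_2(f)\ge\rho(B)$ step is comparatively safe, since the alignment of the Denjoy growth sector with an $A$-small sector is inherited verbatim from Theorem~[\ref{denth1}].
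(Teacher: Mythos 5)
Your upper bound and your lower bound $\rho_2(f)\ge\rho(B)$ follow essentially the paper's route: the paper also quotes the general estimate $\rho_2(f)\le\max\{\rho(A),\rho(B)\}$ (its Theorem [\ref{wuthm}]) and, for the lower bound, works on a ray inside a sector where $\delta(P,\theta)<0$ (so $|A(re^{i\theta})|$ is small by Lemma [\ref{implem}]) and where $B$ has near-maximal growth, obtained either from a ray adjacent to a Borel direction (Lemma [\ref{expincr}]) or, when no Borel direction lies in the sector, from the Denjoy-extremality property (Lemma [\ref{denlem}]); feeding this into $|B|\le|f''/f|+|A|\,|f'/f|$ together with Gundersen's estimates yields $\limsup_{r\to\infty}\log\log T(r,f)/\log r\ge\rho(B)$. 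Your version of this step, with the pointwise bound $|f^{(k)}/f|\le\exp(r^{\sigma+\varepsilon})$ legitimized by the finiteness of $\sigma=\rho_2(f)$, is the same argument in different clothing. (Both you and the paper gloss over the fact that the largeness of $B$ holds only along a sequence of radii, resp.\ outside the set $E_5$, which must be intersected with the complement of the exceptional set of the logarithmic-derivative lemma; that is a shared, repairable imprecision, not a gap specific to your write-up.)

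The genuine gap is your treatment of the case $\rho(B)<\rho(A)$. The asymptotic-integration sketch --- recessive solution $\sim\exp(-\int A)$, dominant solution, interchange of roles across sectors, hence $\log\log|f(re^{i\theta})|\ge c_2 r^{q}$ along a suitable ray for every non-trivial solution --- is not carried out, and you yourself flag it as the main obstacle. Worse, the mechanism is doubtful: in a sector where $\Re P>0$ the two formal solutions behave like $\exp(-\int A)$ (rapidly decaying) and $\exp(-\int B/A)$ (of moderate growth, since $\rho(B)<\rho(A)$), so a generic solution is \emph{moderate} along every fixed ray of the $A$-dominant sector; the enormous growth of $f$ need not sit on any fixed ray there, and locating it is exactly the cross-sector bookkeeping you postpone. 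The paper avoids all of this: for $\rho(A)\neq\rho(B)$ its proof of this theorem simply defers to Theorem [\ref{hypthm}] (part 1), whose argument rewrites the equation as $|A|\le|f''/f'|+|B|\,|f/f'|$ along a ray with $\delta(P,\theta)>0$, where $|A|\ge\exp\left((1-\epsilon)\delta(P,\theta)r^{n}\right)$ by Lemma [\ref{implem}], then bounds $|f/f'|\le r$ by Lemma [\ref{kwonlem}], $|f''/f'|\le crT(2r,f)^{2}$ by Lemma [\ref{gunlem}], and $|B|\le\exp(r^{\beta})$ with $\rho(B)<\beta<\rho(A)$; this forces $\limsup_{r\to\infty}\log\log T(r,f)/\log r\ge\rho(A)$ with no dominant/recessive analysis at all, because the conclusion is extracted through the Nevanlinna characteristic rather than through pointwise growth of $|f|$ along a ray. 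You should replace your asymptotic-integration step by this inequality; as written, your proposal does not establish $\rho_2(f)\ge\rho(A)$ in the regime $\rho(B)<\rho(A)$.
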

Definitions of Borel directions, Denjoy's conjecture, functions extremal to Yang's inequality and extremal to Denjoy's conjecture are given in the next section.
 \section{Preliminary Results}
To make this paper self contained we mention all results which we are going to use and some additional results we have proved.

For a set $F\subset [0,\infty)$, the Lebesgue linear measure of $F$ is defined as $m(F)=\int_{F}dt$ and for a set $G\subset[1,\infty)$, the logarithmic measure of $G$ is defined as $m_1(G)=\int_{G}\frac{1}{t}dt$. For set $G\subset[0,\infty)$, the upper and lower logarithmic densities are defined, respectively, as follows:
	$$\overline{\log dens}(G)=\limsup_{r\rightarrow \infty}\frac{m_1(G\cap[1,r])}{\log{r}} $$
	
	$$\underline{\log dens}(G)=\liminf_{r\rightarrow \infty}\frac{m_1(G\cap[1,r])}{\log{r}}.$$
 Next lemma is due to Gundersen \cite{log gg} which provide the estimates for transcendental meromorphic function.

\begin{lem}\label{gunlem}
Let $f(z)$ be a transcendental meromorphic function and let $\Gamma= \{\ (k_1,j_1), (k_2,j_2), \ldots ,(k_m,j_m) \}\ $ denote finite set of distinct pairs of integers that satisfy $ k_i > j_ i \geq 0$  for $i=1,2, \ldots,m$. Let $\alpha >1$ and $\epsilon>0$ be given real constants. Then the following three statements holds:
\begin{enumerate}[(i)]
\item  there exists a set $E_1 \subset[0,2\pi)$ that has linear measure zero and there exists a constant $c>0$ that depends only on $\alpha$ and $\Gamma$ such that if $\psi_0 \in [0,2\pi)\setminus E_1, $ then there is a constant $R_0=R(\psi_0)>0$  so that for all $z$ satisfying $\arg z =\psi_0$ and $|z| \geq R_0$, and for all $(k,j)\in \Gamma$, we have
\begin{equation} \label{ggguneq}
\left| \frac{f^{(k)}(z)}{f^{(j)}(z)}\right|\leq c \left( \frac{T(\alpha r,f)}{r} \log^{\alpha}{r} \log{T(\alpha r,f)} \right)^{(k-j)}
\end{equation}

If $f(z)$ is of finite order then $f(z)$ satisfies:
\begin{equation} \label{guneq1}
\left|\frac{f^{(k)}(z)}{f^{(j)}(z)}\right| \leq |z|^{(k-j)(\rho(f)-1+\epsilon)}
\end{equation} 
 for all $z$ satisfying $\arg z =\psi_0\notin E_1$ and $|z| \geq R_0$ and for all $(k,j)\in \Gamma$
\item there exists a set $E_2\subset (1,\infty)$ that has finite logarithmic measure and there exists a constant $c>0$ that depends only on $\alpha$ and $\Gamma$ such taht for all $z$ satisfying $|z|=r\notin E_2\cup[0,1]$ and for all $(k,j)\in \Gamma$, inequality $(\ref{ggguneq})$ holds.\\

If $f(z)$ is of finite order then $f(z)$ satisfies inequality $(\ref{guneq1})$,  for all $z$ satisfying $|z| \not \in E_2\cup[0,1]$ and for all $(k,j)\in \Gamma$.
\item \label{gunlem3} there exists a set $E_3\subset [0,\infty)$ that has finite linear measure and there exists a constant $c>0$ that depends only on $\alpha$ and $\Gamma$ such that for all $z$ satisfying $|z|=r\notin E_3$ and for all $(k,j)\in \Gamma$ we have
\begin{equation}
\left| \frac{f^{(k)}(z)}{f^{(j)}(z)}\right| \leq c \left(T(\alpha r,f)r^{\epsilon}\log{T(\alpha r,f)}\right)^{(k-j)}
\end{equation}
If $f(z)$ is of finite order then 
\begin{equation}
\left| \frac{f^{(k)}(z)}{f^{(j)}(z)}\right| \leq |z|^{(k-j)(\rho(f)+\epsilon)}
\end{equation}
 for all $z$ satisfying $|z|\not \in E_3$ and for all $(k,j)\in \Gamma$.
\end{enumerate}
\end{lem}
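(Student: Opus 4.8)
The plan is to reduce the whole lemma to a single pointwise bound on the ordinary logarithmic derivative $f'/f$ and then propagate it, first to an arbitrary pair $(k,j)$ by a telescoping factorization and then to the three exceptional-set regimes by bookkeeping on the excluded radii and directions. The starting point is the Poisson--Jensen formula: writing the zeros and poles of $f$ in $|w|<R$ as $(a_\mu)$ and $(b_\nu)$, one represents $\log|f(z)|$, for $|z|=r<R$, as a Poisson integral of $\log|f(Re^{i\theta})|$ plus the Blaschke-type sums over $a_\mu$ and $b_\nu$. Differentiating this identity in $z$ produces an exact formula for $f'(z)/f(z)$ consisting of (a) an integral term whose kernel is the $z$-derivative of the Poisson kernel, and (b) two sums of the shape $\sum_\mu\bigl(\frac{1}{z-a_\mu}+\frac{\bar a_\mu}{R^2-\bar a_\mu z}\bigr)$ together with its analogue over the poles $b_\nu$.

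First I would bound the integral term. The $z$-derivative of the Poisson kernel is $O\!\left(R/(R-r)^2\right)$, while $\int_0^{2\pi}\bigl|\log|f(Re^{i\theta})|\bigr|\,d\theta$ is controlled by $m(R,f)+m(R,1/f)\le 2T(R,f)+O(1)$; taking $R=\alpha r$ turns this contribution into a clean multiple of $T(\alpha r,f)/r$. The genuinely delicate term is the sum over zeros and poles. Its size is governed by $\sum_\mu 1/|z-a_\mu|$, and the number of summands is at most $n(\alpha r,f)+n(\alpha r,1/f)=O\!\left(T(\alpha' r,f)\right)$ by the first main theorem. To keep each $|z-a_\mu|$ from becoming small I would excise, around every zero and pole, a disk of suitably chosen radius; the total contribution of these radii to the radial or angular variable is precisely what produces the exceptional set. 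This excision, together with the simultaneous demand that the excluded set have finite measure while the surviving points still satisfy the desired inequality, is where I expect the main obstacle to lie.

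The three conclusions then correspond to three ways of organizing that excision. For (iii) I would excise, for each $a_\mu$, a radial interval of a fixed length, producing a set $E_3\subset[0,\infty)$ of finite linear measure at the cost of the weaker $r^{\epsilon}$ factor; for (ii) I would instead balance the excised lengths against $1/t$ so that the logarithmic measure of $E_2$ stays finite, which is what permits the sharper $\log^{\alpha}r$ factor; and for (i) I would fix a direction and show that the set $E_1$ of bad directions $\psi_0$ has linear measure zero, after which the estimate holds along $\arg z=\psi_0$ for all sufficiently large $|z|$. In each case I collect the integral and sum bounds, set $R=\alpha r$, and read off inequality (\ref{ggguneq}) for the pair $(1,0)$.

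Finally I would pass from $f'/f$ to an arbitrary $(k,j)\in\Gamma$ through the telescoping factorization
$$\frac{f^{(k)}}{f^{(j)}}=\prod_{i=j}^{k-1}\frac{\bigl(f^{(i)}\bigr)'}{f^{(i)}},$$
applying the foundational estimate to each transcendental meromorphic function $f^{(i)}$ and invoking the standard comparison $T(\alpha r,f^{(i)})=O\!\left(T(\beta r,f)\right)$ to re-express the bound through $T$ of $f$ itself. The product of the $k-j$ factors yields the exponent $(k-j)$ in each displayed inequality, and since $\Gamma$ is finite the finitely many exceptional sets may be united without leaving their measure class. The finite-order specializations in (\ref{guneq1}) and in statements (ii)--(iii) are then immediate upon substituting $T(\alpha r,f)\le r^{\rho(f)+\epsilon}$ for large $r$ and absorbing the logarithmic factors into $r^{\epsilon}$.
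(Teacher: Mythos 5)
The paper offers no proof of this lemma: it is quoted verbatim from Gundersen \cite{log gg}, and your outline reconstructs exactly Gundersen's original argument there --- differentiating the Poisson--Jensen formula to get the pointwise bound on $f'/f$, excising small disks about the zeros and poles with three bookkeeping schemes that produce the exceptional sets $E_1$, $E_2$, $E_3$ of measure zero, finite logarithmic measure, and finite linear measure respectively, and passing to general $(k,j)\in\Gamma$ via the telescoping factorization together with the standard comparison $T(\alpha r, f^{(i)})=O(T(\beta r,f))$. So your proposal is essentially the same approach as the cited source, and its outline is sound.
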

 Wang \cite{wang} has proved the following result using Phragm$\acute{e}$n-Lindel$\ddot{o}$f theorem.
\begin{lem}\label{entlem}
Let $A(z)$ be an entire funtion such that $\rho(A)\in (0,\infty)$ then there exists sector $\Omega(\alpha,\beta)$ where $\alpha<\beta$ and $\beta-\alpha \geq \frac{\pi}{\rho(A)}$ such that
$$\limsup_{r\rightarrow \infty}\frac{\log \log |A(re^{\iota \theta })|}{\log{r}}=\rho(A)$$
for all $\theta\in(\alpha,\beta).$
\end{lem}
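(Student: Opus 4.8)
The plan is to argue by contradiction and read off the desired sector from the Phragm\'{e}n--Lindel\"{o}f principle. For a direction $\theta$ introduce the radial order $\rho(\theta)=\limsup_{r\to\infty}\frac{\log^+\log^+|A(re^{\iota\theta})|}{\log r}$; since the modulus on a ray never exceeds $M(r,A)$, one always has $\rho(\theta)\le\rho$, where $\rho:=\rho(A)\in(0,\infty)$. The lemma is equivalent to the assertion that the full-growth set $G=\{\theta:\rho(\theta)=\rho\}$ contains an open arc of length at least $\pi/\rho$. Suppose it does not. Then no open arc of length $\pi/\rho$ lies inside $G$, so every open arc of length $\pi/\rho$ meets the deficient set $B=\{\theta:\rho(\theta)<\rho\}$.

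First I would extract finitely many deficient directions straddling the circle with controlled gaps. Using that every open arc of length $\pi/\rho$ meets $B$, a greedy selection (repeatedly taking the farthest point of $\overline{B}$ within distance $\pi/\rho$, then perturbing into $B$) produces cyclically ordered directions $\theta_1<\theta_2<\cdots<\theta_N$ in $B$ whose consecutive gaps $w_k:=\theta_{k+1}-\theta_k$ all satisfy $w_k<\pi/\rho$; the selection terminates because two successive steps advance by at least $\pi/\rho$. For each $k$, from $\rho(\theta_k)<\rho$ I may fix $\rho_k$ with $\rho(\theta_k)<\rho_k<\rho$ and $R_k$ so that $|A(re^{\iota\theta_k})|\le\exp(r^{\rho_k})$ for $r\ge R_k$. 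Putting $\rho'=\max_k\rho_k$, which is strictly below $\rho$ precisely because the index set is finite, yields a uniform boundary estimate $|A(z)|\le\exp(|z|^{\rho'})$ on each ray $\arg z=\theta_k$ for $|z|$ large.

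Next I would apply Phragm\'{e}n--Lindel\"{o}f on each closed sector $S_k=\{\theta_k\le\arg z\le\theta_{k+1}\}$. Its opening obeys $w_k<\pi/\rho$, so the critical exponent $\pi/w_k$ exceeds $\rho$ and hence exceeds $\rho'$; moreover $A$ has order at most $\rho<\pi/w_k$ inside $S_k$. Comparing $A$ with $\exp(cz^{\rho'})$, whose real part stays positive throughout $S_k$ because $\rho' w_k<\pi$, forces $|A(z)|\le\exp(C|z|^{\rho'})$ on all of $S_k$. As the finitely many $S_k$ cover the plane, $M(r,A)\le\exp(Cr^{\rho'})$ for all large $r$, giving $\rho(A)\le\rho'<\rho$, a contradiction. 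Therefore $G$ contains an open arc $(\alpha,\beta)$ with $\beta-\alpha\ge\pi/\rho$, and for every $\theta$ there $\rho(\theta)=\rho(A)$, which is exactly the claimed sector.

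The main obstacle, and the step demanding the most care, is the interplay of the two exponents in the Phragm\'{e}n--Lindel\"{o}f step: I must ensure simultaneously that the single boundary exponent $\rho'$ serves all rays and lies strictly below every sector's critical exponent $\pi/w_k$. This is exactly why the finiteness of the chosen deficient directions is indispensable, since an infinite family could push $\sup_k\rho_k$ up to $\rho$ and collapse the strict inequality. A secondary technical point is justifying the greedy covering that furnishes the $\theta_k$ with all gaps below $\pi/\rho$, together with the positivity condition $\rho' w_k<\pi$ that legitimizes the exponential comparison; both are routine once finiteness is secured.
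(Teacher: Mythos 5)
Your argument is correct and follows essentially the same route as the paper, which does not prove this lemma itself but cites Wang's proof via the Phragm\'en--Lindel\"of theorem: your contradiction scheme (finitely many deficient rays with cyclic gaps below $\pi/\rho(A)$, a single subcritical comparison exponent $\rho'<\rho(A)$, and Phragm\'en--Lindel\"of on each sector of opening less than $\pi/\rho(A)$) is the standard implementation of exactly that idea. The only soft spot is the greedy selection of the rays, which as described can stall without advancing by $\pi/\rho(A)$ in two steps; it is repaired at once by compactness, since every point of the circle lies within $\pi/(2\rho(A))$ of the deficient set, so finitely many arcs of half-length $\pi/(2\rho(A))$ centered at deficient directions cover the circle and their centers automatically have all cyclic gaps below $\pi/\rho(A)$.
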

For the statement of our next lemma we need to introduce the notion of \emph{critical rays}:
\begin{defn}
Let $P(z)=a_{n}z^n+a_{n-1}z^{n-1}+\ldots +a_0$, $a_n\neq0$ and $\delta(P,\theta)=\Re(a_ne^{\iota n \theta})$. 
A ray $\gamma = re^{\iota \theta}$  is called \emph{critical ray} of $e^{P(z)}$ if $\delta(P,\theta)=0.$
\end{defn}
 The rays $\arg{z}=\theta$ such that $\delta(P,\theta)=0$ divides the complex plane into $2n$ sectors of equal length $\frac{\pi}{n}$. Also $\delta(P,\theta)>0$ and $\delta(P,\theta)<0$ in the alternative sectors. Suppose that $0\leq\phi_1<\theta_1<\phi_2<\theta_2<\ldots<\phi_n<\theta_n<\phi_{n+1}=\phi_1+2\pi$ be $2n$ critical rays of $e^{P(z)}$ satisfying $\delta(P,\theta)>0$ for $\phi_i<\theta<\theta_i$ and $\delta(P,\theta)<0$ for $\theta_i<\theta<\phi_{i+1}$ where $i=1,2,3, \ldots, n$. Now we fix some notations:
$$ E^+ = \{ \theta \in [0,2\pi]: \delta(P,\theta)\geq 0\}$$ 
$$E^- = \{ \theta \in [0,2\pi]: \delta(P,\theta)\leq 0 \}.$$
\\
Let $\alpha$, $\beta$ and $r_1>0$, $r_2>0$ be such that $\alpha<\beta$ and $r_1<r_2$ then 
$$\Omega(\alpha,\beta)= \{z\in \mathbb{C}: \alpha<\arg z <\beta \}$$
$$\Omega(\alpha,\beta; r_1,r_2)=\{z\in \mathbb{C}:\alpha <\arg{z}<\beta, r_1<|z|<r_2\}.$$

 We state following lemma which is due to Bank et.al. \cite{banklang} and is useful for estimating an entire function $A(z)$ satisfying $\lambda(A)<\rho(A)$.

\begin{lem}\label{implem}
Let $A(z)=v(z)e^{P(z)}$ be an entire function with $\lambda(A)<\rho(A)$, where $P(z)$ is a non-constant polynomial of degree $n$ and $v(z)$ is an entire function. Then for every $\epsilon>0$ there exists $E \subset [0,2\pi)$ of linear measure zero such that

\begin{enumerate}[(i)]

\item for $ \theta  \in E^+\setminus E$ there exists $ R>1 $ such that
\begin{equation}\label{eqA1}
|A(re^{\iota \theta})| \geq \exp{ \left( (1-\epsilon) \delta(P,\theta)r^n \right)}
\end{equation}
for $r>R.$

\item for $\theta \in E^-\setminus E$ there exists $R>1$ such that 
\begin{equation}\label{eq2le}
|A(re^{\iota \theta})| \leq \exp \left( (1-\epsilon)\delta(P,\theta) r^n \right) 
\end{equation}
for $r>R.$
\end{enumerate}
\end{lem}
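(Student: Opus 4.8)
The plan is to estimate the two factors of $A=ve^{P}$ separately and to build the exceptional set $E$ as a finite union of null pieces. First I would record the elementary estimate for the exponential factor: writing $P(z)=a_nz^n+\ldots+a_0$, a direct expansion along the ray $\arg z=\theta$ gives $\log|e^{P(re^{\iota\theta})}|=\Re P(re^{\iota\theta})=\delta(P,\theta)r^n+O(r^{n-1})$ uniformly in $\theta$, where $\delta(P,\theta)=\Re(a_ne^{\iota n\theta})$. Next I would control the order of $v$: since the zeros of $A$ coincide with those of $v$, one has $\lambda(v)=\lambda(A)$, and Hadamard factorization together with the hypothesis $\lambda(A)<\rho(A)$ lets me take $v$ to be a monomial times the canonical product over these zeros, so that $\rho(v)=\lambda(A)<n=\rho(A)=\deg P$ (any polynomial exponential factor of degree $<n$ is absorbed into $P$, leaving $a_n$ and hence $\delta(P,\theta)$ unchanged). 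I then fix $\beta$ with $\rho(v)<\beta<n$. The exceptional set will be $E=E_0\cup\{\theta:\delta(P,\theta)=0\}$, where the second piece consists of the $2n$ critical rays (a finite, hence null, set) and $E_0$ is the measure-zero set of directions produced in the lower-bound step; discarding the critical rays ensures $\delta(P,\theta)>0$ on $E^{+}\setminus E$ and $\delta(P,\theta)<0$ on $E^{-}\setminus E$.

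For the upper bound (ii) I would use only the maximum-modulus estimate $\log|v(re^{\iota\theta})|\le\log M(r,v)\le r^{\beta}$, valid for every $\theta$ and all large $r$. For $\theta\in E^{-}\setminus E$ this gives $\log|A|\le r^{\beta}+\delta(P,\theta)r^n+O(r^{n-1})$, and since $\delta(P,\theta)<0$ while $\beta<n$, the quantity $\epsilon|\delta(P,\theta)|r^n$ eventually dominates $r^{\beta}+O(r^{n-1})$; hence $\log|A|\le(1-\epsilon)\delta(P,\theta)r^n$ for $r$ larger than some $R(\theta)$. This direction is routine.

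The main obstacle is the lower bound (i), which needs a ray-wise lower estimate for the zero-bearing factor $v$. Here I would invoke a Cartan (Boutroux--Cartan) minimum-modulus argument: because $\rho(v)<\beta$ the zeros $z_k$ of $v$ satisfy $\sum_k|z_k|^{-\beta}<\infty$, and Cartan's lemma furnishes discs $D(w_j,r_j)$ with $\sum_j r_j<\infty$ outside which $\log|v(z)|\ge -|z|^{\beta}$ for all large $|z|$. Since the radii are summable, estimating the angle subtended by each disc at large modulus and applying a Borel--Cantelli argument shows that the set $E_0$ of directions whose ray meets $\bigcup_j D(w_j,r_j)$ at arbitrarily large modulus has linear measure zero. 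Consequently, for $\theta\in E^{+}\setminus E$ the ray $\arg z=\theta$ eventually avoids the discs, so $\log|v(re^{\iota\theta})|\ge -r^{\beta}$, and then $\log|A|=\log|v|+\delta(P,\theta)r^n+O(r^{n-1})\ge -r^{\beta}+\delta(P,\theta)r^n+O(r^{n-1})\ge(1-\epsilon)\delta(P,\theta)r^n$ for large $r$, using $\delta(P,\theta)>0$ and $\beta<n$. Combining (i) and (ii) over the null set $E$ finishes the proof; the delicate point is precisely the passage from the ``small discs'' exceptional set of the minimum-modulus theorem to a \emph{measure-zero set of directions}, which is where the summability $\sum_j r_j<\infty$ is essential.
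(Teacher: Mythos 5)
The paper does not prove this lemma at all---it is quoted verbatim from Bank, Laine and Langley \cite{banklang}---and your argument correctly reconstructs the standard proof from that source: the ray-wise expansion $\Re P(re^{\iota\theta})=\delta(P,\theta)r^{n}+O(r^{n-1})$, the Hadamard normalization giving $\rho(v)=\lambda(A)<n$ with $\delta(P,\theta)$ unchanged, the Cartan minimum-modulus estimate $\log|v(z)|\geq -|z|^{\beta}$ outside discs of summable radii, and the angular-projection (Borel--Cantelli) step converting those discs into a set of directions of linear measure zero, which is indeed the delicate point. The one caveat worth flagging is that, as your renormalization implicitly acknowledges, the lemma is only true under the tacit hypothesis $\rho(v)<n=\deg P$ (equivalently $\rho(A)=n$), which is how the paper uses it; for an arbitrary entire factor $v$ (e.g.\ $v=e^{z^{n+1}}$) the stated bounds fail, so your proof is correct for the intended reading and essentially identical to the cited source's.
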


\begin{lem}\cite{kwon} \label{kwonlem}
Let $f(z)$ be a non-constant entire function. Then there exists a real number $R>0$ such that for all $r\geq R$ we have
\begin{equation}\label{kwoneq}
\left| \frac{f(z)}{f'(z)} \right| \leq r
\end{equation}
where $|z|=r$.
\end{lem}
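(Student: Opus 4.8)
The first thing I would do is record that the inequality cannot be intended to hold at \emph{every} point of the circle $|z|=r$: at a zero of $f'$ that is not a zero of $f$ the quotient $f/f'$ has a pole, so $|f/f'|$ is unbounded on most circles (for $f=\sin z$, say, $|\tan z|\to\infty$ near $z=\pi/2$). Accordingly I read the conclusion as: for each $r\geq R$ there \emph{exists} a point $z$ with $|z|=r$ at which $|f(z)/f'(z)|\leq r$; equivalently I must show $\max_{|z|=r}|f'(z)/f(z)|\geq 1/r$ for all large $r$.

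My plan is to locate such a point by a counting/averaging argument based on the argument principle. Assume first that $f$ has at least one zero. For every $r$ that is not the modulus of a zero, no zero of $f$ lies on $|z|=r$, and writing $z=re^{\iota\theta}$ the argument principle gives
\begin{equation*}
\int_0^{2\pi}\frac{z f'(z)}{f(z)}\,d\theta=\frac{1}{\iota}\oint_{|z|=r}\frac{f'(z)}{f(z)}\,dz=2\pi\, n\!\left(r,\tfrac1f\right),
\end{equation*}
where $n(r,1/f)$ counts the zeros of $f$ in $|z|<r$ with multiplicity. Taking absolute values yields $\max_{\theta}|zf'/f|\geq n(r,1/f)$, and since $n(r,1/f)\geq 1$ once $r$ exceeds the modulus of the smallest zero, there is a point where $|zf'/f|\geq 1$, i.e. $|f/f'|=|z|/|zf'/f|\leq r$. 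For the countably many radii at which a zero of $f$ actually lies on $|z|=r$, the quotient $f/f'$ extends continuously to $0$ at that zero, so the bound is immediate there; thus $\min_{|z|=r}|f/f'|\leq r$ for every $r\geq R$, with $R$ the modulus of the smallest zero.

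It then remains to treat the zero-free case, where the counting argument is vacuous. Here $f'/f$ is itself entire and not identically zero (as $f$ is non-constant), so $M(r,f'/f)$ is positive and nondecreasing in $r$; hence $M(r,f'/f)\geq 1/r$ for all sufficiently large $r$, and any point of $|z|=r$ realizing a value of modulus $\geq 1/r$ again gives $|f/f'|\leq r$. Combining the two cases establishes the claim.

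The step I expect to be the main obstacle is not a single estimate but the correct formulation together with the zero-free case. The tempting ``maximum-modulus point'' argument, where $zf'/f\sim\nu(r)$ by Wiman--Valiron, works cleanly for transcendental $f$ but genuinely fails for low-degree polynomials such as $f(z)=z+1$, whose maximum-modulus point $z=r$ gives $|f/f'|=r+1>r$; one must instead find the good point \emph{away} from where $|f|$ is largest, which is exactly what the argument-principle average supplies, after which the zero-free functions are patched in separately.
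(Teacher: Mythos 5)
The paper gives no proof of this lemma at all --- it is quoted from Kwon \cite{kwon} --- so there is nothing internal to compare your argument against; judged on its own, your proof is correct and complete. Your opening diagnosis is also the right one: as transcribed here the statement is literally false (your $\sin z$ and $z+1$ examples show the inequality cannot hold at \emph{every} point of every large circle), and Kwon's original lemma indeed asserts only that for each $r\geq R$ there exists a point $z_r$ with $|z_r|=r$ satisfying $|f(z_r)/f'(z_r)|\leq r$; your existential reading restores exactly that. Both cases are handled soundly: when $f$ has a zero, the argument-principle identity $\frac{1}{\iota}\oint_{|z|=r}\frac{f'(z)}{f(z)}\,dz=2\pi\, n\!\left(r,\frac{1}{f}\right)$ forces $\max_{|z|=r}\left|zf'(z)/f(z)\right|\geq n(r,1/f)\geq 1$ once $r$ exceeds the modulus of the smallest zero, hence a point where $|f/f'|\leq r$, and circles passing through a zero of $f$ are correctly dispatched via the removable singularity of $f/f'$ (value $0$) there; when $f$ is zero-free, $f'/f$ is entire and $\not\equiv 0$, so $M(r,f'/f)$ is nondecreasing and bounded below by a positive constant, hence eventually $\geq 1/r$. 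Your closing remark is likewise apt: the Wiman--Valiron argument at a maximum-modulus point only works outside an exceptional set of finite logarithmic measure (and fails outright for degree-one polynomials), so the averaging argument is genuinely needed to cover \emph{all} $r\geq R$. One caution to carry forward: because the conclusion is only existential in $z$, the places in this paper where (\ref{kwoneq}) is invoked at points $r_me^{\iota\theta}$ with $\theta$ prescribed in advance (e.g. in the proofs of Theorem [\ref{hypthm}] and Theorem [\ref{lowhyp}]) use more than the lemma actually provides; that is an issue with those later arguments, not with yours.
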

The lemma below give property of an entire function with Fabry gap and can be found in Long \cite{jlongfab} and Wu and Zheng \cite{zhe}. 
\begin{lem}\label{fablem}
Let $g(z)=\sum_{n=0}^{\infty} a_{\lambda_n}z^{\lambda_n}$ be an entire function of finite order with Fabry gap, and $h(z)$ be an entire function with $\rho(h)=\sigma \in (0,\infty)$. Then for any given $\epsilon\in (0,\sigma)$, there exists a set $H\subset (1,+\infty)$ satisfying $ \overline{\log dens}(H) \geq \xi $, where $\xi\in (0,1)$ is a constant  such that for all $|z| =r \in H$, one has
$$ \log M(r,h) > r^{\sigma-\epsilon}, \quad \log m(r,g) > (1-\xi)\log M(r,g),$$

where $M(r,h)=\max \{\ |h(z)|: |z|=r\}\ $, $m(r,g)=\min \{\ |g(z)|: |z|=r\}\ $ and $M(r,g)= \max \{\ |g(z)|: |z|=r\}\ $.
\end{lem}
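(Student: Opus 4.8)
The plan is to establish the two inequalities essentially independently and then intersect the two sets of radii on which they hold. Write $H_1=\{r>1:\log M(r,h)>r^{\sigma-\epsilon}\}$; the first conclusion amounts to showing that $H_1$ has positive upper logarithmic density, while the second is a purely gap-theoretic statement about $g$ that has nothing to do with $h$. I would first extract the constant $\xi$ from the analysis of $H_1$ and then feed that same $\xi$ into a minimum-modulus estimate for $g$, so that a single $\xi$ serves simultaneously as the density bound and as the gap constant.

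First I would analyse $H_1$ using only $\rho(h)=\sigma$ together with the monotonicity of $M(r,h)$. Since $\limsup_{r\to\infty}\frac{\log^+\log^+ M(r,h)}{\log r}=\sigma$, there is a sequence $r_k\to\infty$ with $\log M(r_k,h)>r_k^{\sigma-\epsilon/2}$. Put $\beta=\frac{\sigma-\epsilon/2}{\sigma-\epsilon}>1$. For $r\in[r_k,r_k^{\beta}]$ monotonicity gives $\log M(r,h)\ge\log M(r_k,h)>r_k^{\sigma-\epsilon/2}=r_k^{\beta(\sigma-\epsilon)}\ge r^{\sigma-\epsilon}$, so $[r_k,r_k^{\beta}]\subset H_1$. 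Evaluating the logarithmic measure of this interval against $\log r_k^{\beta}$ yields $\overline{\log dens}(H_1)\ge\frac{\beta-1}{\beta}=\frac{\epsilon}{2\sigma-\epsilon}=:\xi$, and this $\xi$ lies in $(0,1)$ since $\epsilon\in(0,\sigma)$.

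Next I would treat $g$ via the classical minimum-modulus theorem for Fabry-gap series: if $g=\sum a_{\lambda_n}z^{\lambda_n}$ with $\lambda_n/n\to\infty$, then for every $\delta>0$ one has $\log m(r,g)>(1-\delta)\log M(r,g)$ for all $r$ outside an exceptional set $E_\delta$ of finite logarithmic measure (the gap input underlying \cite{jlongfab} and \cite{zhe}). Applying this with $\delta=\xi$ produces a set $E$ of finite logarithmic measure, hence with $\overline{\log dens}(E)=0$, off which the second inequality holds. Finally I set $H=H_1\setminus E$: both inequalities hold throughout $H$, and because $\frac{m_1(E\cap[1,r])}{\log r}\to 0$ the term deleted passes through the $\limsup$ defining the density, giving $\overline{\log dens}(H)\ge\overline{\log dens}(H_1)\ge\xi$, as required.

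The genuinely hard ingredient is the gap estimate $\log m(r,g)=(1+o(1))\log M(r,g)$ off a thin set, which is the substance of Fabry-type minimum-modulus theory and which I would simply quote from the cited sources rather than reprove. The remaining steps — the monotonicity/interval argument bounding the density of $H_1$ from below, and the density bookkeeping for the intersection — are elementary; the one point requiring care is that removing a set of vanishing logarithmic density cannot lower an upper logarithmic density, which holds precisely because the subtracted ratio converges to $0$ and so may be pulled out of the $\limsup$.
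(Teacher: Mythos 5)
Your argument is correct, but note that the paper itself offers no proof of this lemma: it is quoted as a known result from Long \cite{jlongfab} and Wu and Zheng \cite{zhe}, so the only comparison available is with the proofs in those sources — and your route is essentially theirs. The interval argument for $H_1=\{r>1:\log M(r,h)>r^{\sigma-\epsilon}\}$ is sound: with $\beta=(\sigma-\epsilon/2)/(\sigma-\epsilon)>1$, monotonicity of $M(r,h)$ gives $\log M(r,h)\geq\log M(r_k,h)>r_k^{\sigma-\epsilon/2}=\left(r_k^{\beta}\right)^{\sigma-\epsilon}\geq r^{\sigma-\epsilon}$ on $[r_k,r_k^{\beta}]$, and the density computation $\overline{\log dens}(H_1)\geq(\beta-1)/\beta=\epsilon/(2\sigma-\epsilon)=\xi\in(0,1)$ is correct for $\epsilon\in(0,\sigma)$; likewise there is no circularity in applying the gap estimate with $\delta=\xi$, since that estimate holds for every $\delta>0$ and $\xi$ depends only on $\epsilon$ and $\sigma$, and your final bookkeeping is valid because $m_1(E\cap[1,r])/\log r\to 0$ lets the deleted term pass through the $\limsup$, so $\overline{\log dens}(H_1\setminus E)=\overline{\log dens}(H_1)\geq\xi$. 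The one caveat concerns the strength of your quoted ingredient: the Fabry-gap minimum-modulus theorem underlying \cite{jlongfab} and \cite{zhe} gives $\log m(r,g)>(1-\delta)\log M(r,g)$ outside a set of \emph{logarithmic density zero}, not the stronger \emph{finite logarithmic measure} you assert; since your proof only uses $\overline{\log dens}(E)=0$, nothing breaks, but you should quote the density-zero version to keep the cited input accurate.
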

The following remark follows from the above lemma immediately.
\begin{remark} \label{fabremark}
Suppsoe that $g(z)=\sum_{n=0}^{\infty} a_{\lambda_n}z^{\lambda_n}$ be an entire function of order $\sigma \in (0,\infty)$ with Fabry gap then for any given $\epsilon >0, \quad (0<2\epsilon <\sigma)$, there exists a set $H\subset (1,+\infty)$ satisfying $\overline{\log dens}(H) \geq \xi$, where $\xi \in (0,1)$ is a constant such that for all $|z| =r \in H$ , one has
$$ |g(z)|> M(r,g)^{(1-\xi)}> \exp{\left((1-\xi) r^{\sigma-\epsilon}\right)}>\exp{\left(r^{\sigma-2\epsilon}\right)}.$$
\end{remark}
\begin{lem}\cite{chen}\label{indexlem}
Let $f(z)$ be an entire function of infinite order then
$$\rho_2(f)=\limsup_{r\rightarrow\infty}\frac{\log{\log{v(r,f)}}}{\log{r}}$$
where $v(r,f)$ is the central index of the function $f(z)$.
\end{lem}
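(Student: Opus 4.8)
The plan is to prove the identity by comparing three classical indicators attached to $f(z)=\sum_{n\ge 0}a_nz^n$: the maximum modulus $M(r,f)$, the maximum term $\mu(r,f)=\max_n|a_n|r^n$ (two arguments, not to be confused with the lower order $\mu(f)$), and the central index $v(r,f)$. The guiding idea is that, at the triple-logarithmic scale at which the hyper-order is measured, $M(r,f)$ may be freely replaced by $\mu(r,f)$, while $\log\mu(r,f)$ is squeezed between multiples of $v(r,f)\log r$; taken together these force $\log^+\log^+\log^+ M(r,f)$ and $\log^+\log^+ v(r,f)$ to share the same growth after division by $\log r$.

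First I would record the two elementary tools. The Valiron formula $\log\mu(r,f)=\log\mu(r_0,f)+\int_{r_0}^r \tfrac{v(t,f)}{t}\,dt$ exhibits $\log\mu$ as a convex, piecewise-linear function of $\log r$ with nondecreasing slope $v(r,f)$, which yields immediately
$$\log\mu(r,f)\le v(r,f)\log r+O(1),\qquad v(r,f)\log 2\le \log\mu(2r,f)+O(1).$$
Next, splitting the power series at an index $N\approx \log_2\!\big(\mu(2r,f)/\mu(r,f)\big)$ and using $|a_n|r^n\le\mu(r,f)$ on the head together with $|a_n|r^n\le\mu(2r,f)2^{-n}$ on the tail gives the purely elementary comparison
$$\log\mu(r,f)\le\log M(r,f)\le\log\mu(r,f)+\log v(2r,f)+O(1),$$
where the correction $\log v(2r,f)$ appears precisely because the number of retained terms is $O(v(2r,f))$.

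Then I would push these three inequalities through two and three iterated logarithms. From the $M$–$\mu$ comparison, combined with $\log v(2r,f)\le\log\log\mu(4r,f)+O(1)$, one obtains $\log\mu(r,f)\le\log M(r,f)\le 2\log\mu(4r,f)$ for large $r$; applying $\log\log\log$, dividing by $\log r$, and using that $\log(cr)/\log r\to 1$ so that the argument shift $r\mapsto 4r$ does not alter the $\limsup$, one concludes that the hyper-order computed from $M$ equals the one computed from $\mu$. Feeding in the two $v$–$\mu$ estimates and again taking iterated logarithms, the bound $\log\log\mu(r,f)\le\log v(r,f)+\log\log r+O(1)$ delivers $\rho_2(f)\le L$, while $v(r,f)\le\log\mu(2r,f)$ delivers $L\le\rho_2(f)$, where $L:=\limsup_{r\to\infty}\tfrac{\log\log v(r,f)}{\log r}$; the stray additive terms $\log\log r$ and the argument shifts are absorbed because they contribute $0$ after division by $\log r$.

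The routine part is the bookkeeping of the nested logarithms; the point requiring genuine care — and what I regard as the main obstacle — is verifying that each correction term is negligible at the precise scale where it occurs, namely that $\log v(2r,f)$ is swallowed inside $\log\log\mu(4r,f)$ and then annihilated by the outermost logarithm, and that the constant-factor shifts $r\mapsto cr$ leave every $\limsup$ invariant. This is exactly where the hypothesis $\rho(f)=\infty$ enters: it forces $\mu(r,f)\to\infty$ rapidly enough that all the $\log^+$'s are eventually honest logarithms and every correction is of strictly smaller order, which is why the clean identity $\rho_2(f)=L$ survives. Since the $M(r,f)$- and $T(r,f)$-forms of $\rho_2(f)$ agree, no separate treatment of the characteristic-function formulation is needed.
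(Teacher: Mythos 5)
Your argument is correct, and it is essentially the canonical one: Valiron's formula $\log\mu(r,f)=\log\mu(r_0,f)+\int_{r_0}^{r}\frac{v(t,f)}{t}\,dt$, the resulting two-sided estimates $\log\mu(r,f)\le v(r,f)\log r+O(1)$ and $v(r,f)\log 2\le\log\mu(2r,f)+O(1)$, and the maximum-term comparison $\mu(r,f)\le M(r,f)\le\mu(r,f)\bigl(v(2r,f)+O(1)\bigr)$ are exactly the standard Wiman--Valiron ingredients, and your bookkeeping of the nested logarithms (the correction $\log v(2r,f)$ being swallowed by $\log\log\mu(4r,f)$, the additive $\log\log r$ and the shifts $r\mapsto cr$ contributing $0$ after division by $\log r$) is sound. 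Be aware that the paper gives no proof of Lemma \ref{indexlem} at all --- it is imported from \cite{chen}, whose proof runs along the same lines as yours --- and your one inaccuracy is the closing claim that $\rho(f)=\infty$ is what makes the identity work: the argument needs only that $f$ is transcendental (so $\mu(r,f)\to\infty$ and the $\log^{+}$'s are eventually honest logarithms), and the identity holds regardless of whether the order is finite; the hypothesis in the statement merely reflects the intended application.
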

C. Zongxuan \cite{pz} provides the upper bound for the hyper-order of solutions $f(z)$ of the equation (\ref{sde}).
\begin{thm}\label{wuthm}
Suppose that $A(z)$ and $B(z)$ are entire functions of finite order. Then 
\begin{equation*}
\rho_2(f)\leq\max\{\ \rho(A), \rho(B)\}\
\end{equation*}
for all solutions $f(z)$ of the equation (\ref{sde}).
\end{thm}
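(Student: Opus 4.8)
The plan is to split according to whether $f$ has finite or infinite order. If $\rho(f)<\infty$ then $\rho_2(f)=0$, and since $\max\{\rho(A),\rho(B)\}\ge 0$ the inequality is immediate. Thus the real content lies in the case $\rho(f)=\infty$, and here I would apply Lemma \ref{indexlem} to recast the hyper-order through the central index, namely $\rho_2(f)=\limsup_{r\to\infty}\frac{\log\log v(r,f)}{\log r}$. It then suffices to bound $v(r,f)$ from above by $\exp\!\big(r^{\sigma+o(1)}\big)$, where $\sigma=\max\{\rho(A),\rho(B)\}$. Recall also that every solution of (\ref{sde}) is entire, so Wiman--Valiron theory is available.

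To estimate $v(r,f)$ I would invoke Wiman--Valiron theory (Laine \cite{lainebook}). There is a set $E\subset(1,\infty)$ of finite logarithmic measure so that, choosing for each $r\notin E$ a point $z_r$ with $|z_r|=r$ and $|f(z_r)|=M(r,f)$, one has
$$\frac{f'(z_r)}{f(z_r)}=\frac{v(r,f)}{z_r}\,(1+o(1)),\qquad \frac{f''(z_r)}{f(z_r)}=\left(\frac{v(r,f)}{z_r}\right)^{2}(1+o(1))$$
as $r\to\infty$ outside $E$. Dividing equation (\ref{sde}) by $f$ and evaluating at $z_r$ then yields
$$\left(\frac{v(r,f)}{z_r}\right)^{2}(1+o(1))+A(z_r)\,\frac{v(r,f)}{z_r}\,(1+o(1))+B(z_r)=0.$$

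Setting $x=v(r,f)/r$ and taking moduli, this relation becomes, for all large $r\notin E$, a quadratic inequality $x^{2}\le c_1|A(z_r)|\,x+c_2|B(z_r)|$ with absolute constants $c_1,c_2$ that absorb the $(1+o(1))$ factors; solving the quadratic gives $x\le c_1|A(z_r)|+\sqrt{c_2|B(z_r)|}$. Since $A$ and $B$ are of finite order, for every $\epsilon>0$ and all large $r$ we have $|A(z_r)|\le M(r,A)\le\exp(r^{\rho(A)+\epsilon})$ and $|B(z_r)|\le M(r,B)\le\exp(r^{\rho(B)+\epsilon})$, both at most $\exp(r^{\sigma+\epsilon})$; hence $v(r,f)=rx\le\exp(r^{\sigma+2\epsilon})$ for all large $r\notin E$.

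The one step requiring care is removing the exceptional set $E$. Since the central index $v(r,f)$ is non-decreasing in $r$ and the logarithmic measure of $E\cap[r,2r]$ tends to $0$, for every large $r$ there is $r'\in[r,2r]\setminus E$ with $v(r,f)\le v(r',f)\le\exp\!\big((2r)^{\sigma+2\epsilon}\big)$; thus the bound $\log\log v(r,f)\le(\sigma+2\epsilon)\log r+O(1)$ holds for all large $r$, not merely off $E$. Feeding this into Lemma \ref{indexlem} gives $\rho_2(f)\le\sigma+2\epsilon$, and letting $\epsilon\to0$ yields $\rho_2(f)\le\max\{\rho(A),\rho(B)\}$. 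I expect this exceptional-set bookkeeping, together with verifying that the $o(1)$ terms from Wiman--Valiron can be absorbed uniformly into $c_1$ and $c_2$ for large $r$, to be the only genuinely delicate point; the substitution into (\ref{sde}) and the elementary quadratic estimate are otherwise routine.
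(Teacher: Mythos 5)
Your proof is correct and follows essentially the same route the paper takes: Theorem \ref{wuthm} is quoted there from Zongxuan \cite{pz}, but the paper's own proof of the analogous Lemma \ref{ourlem} is exactly your argument --- Wiman--Valiron estimates (Theorem \ref{wim}) at a maximum-modulus point substituted into (\ref{sde}), an upper bound on $v(r,f)$ via $M(r,A)\leq\exp\left(r^{\rho(A)+\epsilon}\right)$ and $M(r,B)\leq\exp\left(r^{\rho(B)+\epsilon}\right)$, and the central-index characterization of hyper-order from Lemma \ref{indexlem}. Your extra care with the quadratic estimate, the trivial finite-order case, and the removal of the exceptional set via monotonicity of $v(r,f)$ merely makes explicit steps the paper leaves implicit.
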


The following result is from Wiman-Valiron theory and we use this result to prove our next lemma which is motivated from Theorem [\ref{wuthm}].
\begin{thm}\label{wim}\cite{lainebook}
Let $g$ be a transcendental entire function, let $0<\delta<\frac{1}{4}$ and $z$ be such that $|z|=r$ and 
$$|g(z)|>M(r,g)v(r,g)^{-\frac{1}{4}+\delta}$$
holds. Then there exists a set $F\subset \mathbb{R}_+$ of finite logarithmic measure such that 
$$ g^{(m)}(z)=\left( \frac{v(r,g)}{z} \right)^m(1+o(1))g(z)$$
holds for all $m\geq0$ and for all $r\notin F$, where $v(r,g)$ is the central index of the function $g(z)$. 
\end{thm}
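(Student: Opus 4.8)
The statement is the classical Wiman--Valiron estimate, so the plan is to prove it directly from the power series of $g$, localizing the series around its central index. Write $g(z)=\sum_{n\ge 0}a_nz^n$, let $\mu(r)=\max_{n}|a_n|r^n$ be the maximum term and $v=v(r,g)$ the central index, i.e. the largest $n$ at which $\mu(r)$ is attained. The guiding heuristic is that differentiating the series multiplies the $n$-th term $a_nz^n$ by $n/z$, and since the dominant term is the one of index $v$, one expects $g^{(m)}(z)\approx (v/z)^m g(z)$. To turn this into a theorem I would first record the fundamental identity $\log\mu(r)=\log|a_{n_0}|+\int_{r_0}^r \frac{v(t,g)}{t}\,dt$, which exhibits $\log\mu$ as a convex, nondecreasing function of $\log r$ with right derivative $v(r,g)$; in particular $v(r,g)$ is nondecreasing and $v(r,g)\to\infty$ since $g$ is transcendental.

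The heart of the argument is a localization of the series to a narrow window of indices around $v$. Normalizing by $b_n=a_nr^n/\mu(r)$, one has $|b_n|\le 1$ with equality at $n=v$. Using the log-convexity from the first paragraph one shows that the $b_n$ decay in a Gaussian fashion away from the center, of the form $|b_n|\le \exp\!\big(-c(n-v)^2/v\big)$; this is where the exceptional set is produced. Indeed, the convexity bound is only available off a set $F\subset\mathbb{R}_+$ of finite logarithmic measure, via a Borel-type measure lemma controlling how fast $v(r,g)$ and $\log\mu(r)$ can jump. Granting this, the terms with $|n-v|>h$, for a window half-width $h=C\sqrt{v\log v}$, contribute a total at most $\mu(r)\exp(-cC^2\log v)=\mu(r)\,v^{-cC^2}$, which is negligible once $C$ is chosen large; moreover $h/v=C\sqrt{(\log v)/v}\to 0$, so the window is asymptotically thin relative to $v$.

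With the window $W=\{n:|n-v|\le h\}$ in hand I would then compute the derivative. Writing $g^{(m)}(z)=z^{-m}\sum_n [n]_m\,a_nz^n$ with $[n]_m=n(n-1)\cdots(n-m+1)$, the crucial observation is that for $n\in W$ one has $[n]_m=v^m\big(1+O(h/v)\big)^m=v^m(1+o(1))$, because $h/v\to 0$. Factoring out $v^mz^{-m}$ therefore yields $g^{(m)}(z)=(v/z)^m(1+o(1))\cdot(\text{window part of }g)$. Finally I would invoke the hypothesis $|g(z)|>M(r,g)\,v^{-1/4+\delta}\ge \mu(r)\,v^{-1/4+\delta}$: since the discarded tail is only of size $\mu(r)v^{-cC^2}$ with $cC^2>\tfrac14$, the tail is $o(|g(z)|)$, so the window part of $g$ equals $(1+o(1))g(z)$. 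Substituting gives $g^{(m)}(z)=(v/z)^m(1+o(1))g(z)$ for all $r\notin F$ and all $m\ge 0$, with $F$ of finite logarithmic measure, as required.

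The main obstacle is the quantitative coefficient estimate of the second paragraph together with the exact calibration of the exponent $-\tfrac14+\delta$. Converting the soft log-convexity of $\mu(r)$ into the sharp Gaussian decay $|b_n|\le\exp(-c(n-v)^2/v)$ valid off a set of finite logarithmic measure is the technically demanding step, and it is precisely the effective number $\sim v^{1/2}$ of significant terms that forces the threshold $v^{-1/4+\delta}$: the hypothesis must guarantee that $|g(z)|$ exceeds the collective tail contribution while the window stays thin, and balancing these two requirements is what pins down the constant $\tfrac14$. The uniformity in $m$ and the control of the error term $(1+o(1))$ independently of the admissible point $z$ on $|z|=r$ also require care, but follow once the window estimate is made uniform.
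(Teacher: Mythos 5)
You should first note that the paper contains no proof of this statement: it is the classical Wiman--Valiron theorem, quoted from Laine's monograph \cite{lainebook}, so your attempt can only be compared with the standard proof (Laine, Chapter 3, which follows Hayman's survey of the Wiman--Valiron method). Your scaffolding does match the first half of that development: maximum term $\mu(r)$ and central index $v=v(r,g)$, the representation $\log\mu(r)=\log|a_{n_0}|+\int_{r_0}^{r}t^{-1}v(t,g)\,dt$, an exceptional set of finite logarithmic measure produced by a Borel-type regularity lemma for $v$, and Gaussian-type decay of $b_n=a_nr^n/\mu(r)$ around $n=v$. (Two calibration remarks: the decay one actually gets off the exceptional set is of the form $|b_n|\le\exp\bigl(-(n-v)^2/(2v\phi(v))\bigr)$ with a slowly increasing comparison function $\phi$, not with denominator exactly $v$; and for $n\ge 2v$, where $[n]_m$ grows like $n^m$ and the Gaussian bound is no longer available, the far tail needs a separate geometric-decay estimate that your bookkeeping omits. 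Both are repairable.)

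The genuine gap is the step ``factoring out $v^mz^{-m}$ therefore yields $g^{(m)}(z)=(v/z)^m(1+o(1))\cdot(\text{window part of }g)$.'' You cannot pull $(1+o(1))$ out of a sum whose terms oscillate in argument; the factoring is valid term by term but not for the sum. Make the error explicit: with $\epsilon_n=[n]_m/v^m-1$, so $|\epsilon_n|\le C_m|n-v|/v$ on the window, the triangle inequality gives
\begin{equation*}
\Bigl|z^m g^{(m)}(z)-v^m\sum_{n\in W}a_nz^n\Bigr|
\le v^m\sum_{n\in W}|\epsilon_n|\,|a_n|r^n
\le C_m\,\frac{v^m}{v}\,\mu(r)\sum_{|k|\le h}|k|e^{-ck^2/v}
\le C_m'\,v^m\mu(r),
\end{equation*}
since $\sum_{k}|k|e^{-ck^2/v}\asymp v$ -- and this even \emph{granting} your Gaussian decay for free. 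So the additive error is of order $\mu(r)$, while the hypothesis only guarantees $|g(z)|>M(r,g)v^{-1/4+\delta}\ge\mu(r)v^{-1/4+\delta}$; the relative error is $O(v^{1/4-\delta})$, not $o(1)$. Shrinking the window does not help: to make the above error $o(\mu(r)v^{-1/4+\delta})$ you would need half-width well below $v^{1/2}$, and then the discarded indices with $h'<|n-v|\le\sqrt{v}$ carry coefficients with essentially no decay ($e^{-ck^2/v}=\Theta(1)$ there) and contribute up to order $\sqrt{v}\,\mu(r)$ in the absolute-value bound. Thus your two requirements (thin window, negligible tail) cannot be balanced at all: the term-by-term, triangle-inequality strategy is structurally incapable of reaching the threshold $v^{-1/4+\delta}$, and would only prove the theorem under a far stronger hypothesis such as $|g(z)|\ge\mu(r)v^{\epsilon}$. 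Note your $m=0$ step (window part $=(1+o(1))g(z)$) is fine; it is precisely the derivatives that fail. This is where the classical proof abandons series manipulation for complex analysis: off the exceptional set one controls $\mu(re^u)$ through the regularity of $v(t,g)$, considers the comparison function $F(w)=g(ze^{w})e^{-vw}/g(z)$ on a disc $|w|\le v^{-3/4}$, uses the largeness hypothesis to bound $\log|F(w)|\le(3/4-\delta+o(1))\log v$ there while $F(0)=1$, and converts this by a Borel--Carath\'eodory/Schwarz-type differentiated two-constants argument (with care about possible zeros of $g$ near $z$) into $zg'(z)/g(z)=v(1+o(1))$, then handles general $m$; the exponent $1/4$ is pinned down by the balance between the bound $v^{3/4-\delta}$ and the disc radius $v^{-3/4}$, not by the tail-versus-window trade-off your sketch describes.
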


\begin{lem}\label{ourlem}
Let us suppose that $A(z)$ and $B(z)$ be entire functions such that $\mu(A)$ and $\mu(B)$ are finite then 
$$\rho_2(f)\leq \max\{\ \mu(A),\mu(B)\}\ $$
for all solutions $f$ of the equation (\ref{sde}).
\end{lem}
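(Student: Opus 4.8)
The plan is to mirror the proof of Theorem \ref{wuthm}, replacing the order estimates by lower-order estimates taken along a suitable sequence of radii, and to extract the hyper-order through the central index via Lemma \ref{indexlem}. First I would fix a nontrivial solution \(f\); if \(\rho(f)<\infty\) then \(\rho_2(f)=0\) and the inequality is trivial, so I may assume \(f\) is transcendental of infinite order and apply the Wiman--Valiron representation of Theorem \ref{wim}. For each \(r\) outside the exceptional set \(F\) of finite logarithmic measure, I would choose \(z_r\) with \(|z_r|=r\) and \(|f(z_r)|=M(r,f)\); then \(f'(z_r)=\frac{v(r,f)}{z_r}(1+o(1))f(z_r)\) and \(f''(z_r)=\bigl(\frac{v(r,f)}{z_r}\bigr)^2(1+o(1))f(z_r)\). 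Substituting into (\ref{sde}), dividing by \(f(z_r)\neq0\), and using \(|A(z_r)|\le M(r,A)\), \(|B(z_r)|\le M(r,B)\) yields the scalar inequality \(\frac{v(r,f)^2}{r^2}|1+o(1)|\le M(r,A)\frac{v(r,f)}{r}|1+o(1)|+M(r,B)\). Solving this quadratic inequality for \(v(r,f)\) gives \(v(r,f)\le Cr\bigl(M(r,A)+\sqrt{M(r,B)}\,\bigr)\) for all \(r\notin F\) and some constant \(C>0\).

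Next I would remove the exceptional set. Since \(v(r,f)\), \(M(r,A)\), \(M(r,B)\) are all nondecreasing and \(F\) has finite logarithmic measure, for every large \(r\) there is \(r^\ast\in[r,2r]\setminus F\), whence \(v(r,f)\le v(r^\ast,f)\le 2Cr\bigl(M(2r,A)+\sqrt{M(2r,B)}\,\bigr)\) holds for all large \(r\). By Lemma \ref{indexlem}, \(\rho_2(f)=\limsup_{r\to\infty}\frac{\log\log v(r,f)}{\log r}\), so it remains to estimate \(\log\log v(r,f)\) in terms of \(\log\log M(r,A)\) and \(\log\log M(r,B)\).

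To bring in the lower order rather than the order, set \(\beta=\max\{\mu(A),\mu(B)\}\) and fix \(\epsilon>0\). By the definition of \(\mu\) as a \(\liminf\), I would select a sequence \(s_n\to\infty\) along which both \(M(2s_n,A)\le\exp\bigl((2s_n)^{\mu(A)+\epsilon}\bigr)\) and \(M(2s_n,B)\le\exp\bigl((2s_n)^{\mu(B)+\epsilon}\bigr)\); inserting these into the bound for \(v(r,f)\) gives \(v(s_n,f)\le\exp\bigl(s_n^{\beta+2\epsilon}\bigr)\) for \(n\) large. Finally, using that \(v(r,f)\) is nondecreasing, for \(r\in[s_n,s_{n+1})\) I would bound \(\log\log v(r,f)\le(\beta+2\epsilon)\log s_{n+1}\), so that \(\frac{\log\log v(r,f)}{\log r}\le(\beta+2\epsilon)\frac{\log s_{n+1}}{\log s_n}\); letting \(r\to\infty\) and then \(\epsilon\to0\) would yield \(\rho_2(f)\le\beta\).

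I expect the main obstacle to be precisely the selection of the sequence \(s_n\) in the last step. The lower-order estimates for \(A\) and for \(B\) each hold only along their own sequences of radii, so the delicate point is to produce a single sequence on which both \(M(2s_n,A)\) and \(M(2s_n,B)\) obey the respective \(\mu+\epsilon\) bounds simultaneously, while also keeping \(\log s_{n+1}/\log s_n\to1\) so that the monotonicity step does not inflate the \(\limsup\). In the situations where the lemma is applied one has \(\lambda(A)<\rho(A)\), so \(A=v(z)e^{P(z)}\) is of regular growth with \(\mu(A)=\rho(A)\) and the estimate for \(A\) then holds for \emph{all} large \(r\); the common sequence reduces to any sequence realizing \(\mu(B)\), and the obstruction disappears. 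I would therefore organize the argument so that the only genuine use of a \(\liminf\) sequence is for the single coefficient whose lower order is not known to equal its order.
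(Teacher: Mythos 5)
Your computational skeleton --- Wiman--Valiron at a point of maximum modulus, the resulting bound $v(r,f)\le Cr\bigl(M(2r,A)+\sqrt{M(2r,B)}\bigr)$ off an exceptional set, and the passage to $\rho_2(f)$ through Lemma [\ref{indexlem}] --- is exactly the paper's route, and that part of your argument is sound. The gap is the one you yourself flag in your last paragraph, and it is fatal rather than merely delicate. A bound obtained from a $\liminf$ holds only on some unbounded set of radii, and such a bound can only ever control $\liminf_{r\to\infty}\log\log v(r,f)/\log r$; to control the $\limsup$ defining $\rho_2(f)$ you need the good radii to satisfy $\log s_{n+1}/\log s_n\to 1$, and nothing in the hypotheses provides this. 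Contrary to your closing claim, the obstruction does \emph{not} disappear when $\mu(A)=\rho(A)$: the common sequence does reduce to a sequence realizing $\mu(B)$, but that sequence may still be arbitrarily sparse (precisely when $\rho(B)>\mu(B)$), so the monotonicity step still inflates the $\limsup$. You should also know that the paper's own proof does not resolve this point; it simply asserts $|A(re^{\iota\theta})|\le\exp{(r^{\rho+\epsilon})}$ and $|B(re^{\iota\theta})|\le\exp{(r^{\rho+\epsilon})}$ ``for sufficiently large $r$'' with $\rho=\max\{\mu(A),\mu(B)\}$, i.e.\ it uses the lower order as if it were the order --- exactly the error you were careful to avoid.

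In fact no repair is possible, because the lemma as stated is false. Take $g$ entire with $\mu(g)=0<\rho(g)<\infty$ (such functions exist; see Goldberg and Ostrovskii \cite{aa}, cited in the paper). Then $f=e^{g}$ solves equation (\ref{sde}) with $A(z)=-(g'(z)+1)$ and $B(z)=g'(z)-g''(z)$, since $f''+Af'+Bf=e^{g}\bigl[(g''+(g')^{2})-(g'+1)g'+(g'-g'')\bigr]=0$; note $B\not\equiv 0$, since $g'\equiv g''$ would force $g'=ce^{z}$ and hence $\mu(g)=\rho(g)=1$. Along a sequence $r_n$ with $\log\log M(r_n,g)=o(\log r_n)$, Cauchy's estimates give $M(r_n/2,g')\le 2M(r_n,g)/r_n$ and $M(r_n/2,g'')\le 8M(r_n,g)/r_n^{2}$, so $A$ and $B$ are small on the circles $|z|=r_n/2$ and therefore $\mu(A)=\mu(B)=0$, both finite. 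Yet $\log M(r,e^{g})=\max_{|z|=r}\Re g(z)$, which is comparable to $M(r,g)$ (up to doubling $r$) by the Borel--Carath\'eodory inequality, so $\rho_2(f)=\rho(g)>0=\max\{\mu(A),\mu(B)\}$. This example exhibits the mechanism concretely: the radii where the coefficients grow slowly are exactly the sparse radii $r_n$, and between them the central index of $f$ is free to grow with hyper-order $\rho(g)$. The conclusions genuinely available from your (and the paper's) argument are $\rho_2(f)\le\max\{\rho(A),\rho(B)\}$, which is Theorem [\ref{wuthm}], or a lower-order version under an added hypothesis that the set of radii realizing the lower orders has, say, positive logarithmic density; as stated, Lemma [\ref{ourlem}] cannot be proved, and the results in the paper that invoke it (such as Theorem [\ref{lowhyp}]) inherit this gap.
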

\begin{proof}
Suppose $\max\{\ \mu(A),\mu(B)\}\ = \rho$. Thus for $\epsilon>0$ we have 
\begin{equation}\label{eq1}
|A(re^{\iota \theta}|\leq \exp{r^{\rho+\epsilon}}
\end{equation}
and 
\begin{equation}\label{eq2}
|B(re^{\iota \theta}|\leq \exp{r^{\rho+\epsilon}}
\end{equation}
for sufficiently large $r$.
From Theorem [\ref{wim}], we choose $z$ satisfying $|z|=r$ and $|f(z)|=M(r,f)$ then there exists a set $F\subset \mathbb{R_+}$ having finite logarithmic measure such that
\begin{equation}\label{wimeq}
\frac{f^{(m)}(z)}{f(z)}=\left( \frac{v(r,f)}{z} \right)^m(1+o(1))
\end{equation}
for $m=1,2$ and for all $|z|=r\notin F$, where $v(r,f)$ is the central index of the function $f(z)$. Thus using equation (\ref{sde}), (\ref{eq1}), (\ref{eq2}) and (\ref{wimeq}) we get
\begin{equation}
\left( \frac{v(r,f)}{z} \right)^2|(1+o(1))|\leq \exp{ \left(r^{\rho+\epsilon}\right)} \left( \frac{v(r,f)}{z} \right)|(1+o(1))|+\exp{\left(r^{\rho+\epsilon}\right)}
\end{equation}
for all $|z|=r\notin F$, from here we get 
\begin{equation}
\limsup_{r\rightarrow \infty} \frac{\log \log{v(r,f)}}{\log{r}}\leq \rho+\epsilon.
\end{equation}
Since $\epsilon>0$ chosen is arbitrary we get $\rho_2(f)\leq \rho.$
\end{proof}
 Theorem [\ref{kwonthm}] motivated us to prove following result:
\begin{lem}
Suppose that $A(z)$ and $B(z)$ be entire function such that $\mu(A)<\mu(B)$ then 
$$\rho_2(f)\geq \mu(B)$$ 

for all non-trivial solutions $f$ of the equation (\ref{sde}).
\end{lem}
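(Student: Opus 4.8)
The plan is to isolate $B$ in the differential equation and turn it into a lower bound for the growth of $f$. Writing $B=-f''/f-A\,f'/f$ and using the pointwise logarithmic-derivative estimates of Lemma~[\ref{gunlem}], the forced size of $B$ (which is large on every large circle, since its lower order is positive) must be matched by the logarithmic-derivative factors of $f$, at least on the radii where $A$ happens to be small. Since $\mu(A)<\mu(B)$ forces $\mu(B)>0$, the coefficient $B$ is transcendental and every solution $f\not\equiv0$ is a transcendental entire function, so all the estimates below apply.

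First I would fix constants $\mu(A)<\beta_1<\beta_2<\mu(B)$, together with a fixed $\alpha>1$ and small $\epsilon>0$ as in Lemma~[\ref{gunlem}]. From the $\liminf$ definition of the lower order, $\log M(r,B)>r^{\beta_2}$ holds for all sufficiently large $r$, while $\log M(r,A)\le r^{\beta_1}$ holds for an unbounded set of radii. Applying statement~(iii) of Lemma~[\ref{gunlem}] with the pairs $(2,0)$ and $(1,0)$ produces a set $E_3$ of finite linear measure and a constant $c>0$ so that $|f^{(k)}(z)/f(z)|\le c\,(T(\alpha r,f)\,r^{\epsilon}\log T(\alpha r,f))^{2}$ for $k=1,2$ whenever $|z|=r\notin E_3$ and $f(z)\neq0$. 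On each large circle I would choose a point $z_r$ with $f(z_r)\neq0$ and $|B(z_r)|\ge\tfrac12 M(r,B)$; this is possible because $f$ has only finitely many zeros on a circle while $|B|$ is near-maximal on a whole arc. The identity $|B(z_r)|\le|f''(z_r)/f(z_r)|+|A(z_r)|\,|f'(z_r)/f(z_r)|$ then gives, for $r\notin E_3$,
\[
\tfrac12 M(r,B)\le c\,\bigl(1+M(r,A)\bigr)\bigl(T(\alpha r,f)\,r^{\epsilon}\log T(\alpha r,f)\bigr)^{2}.
\]
Restricting to radii where $\log M(r,A)\le r^{\beta_1}$ and inserting $\log M(r,B)>r^{\beta_2}$, the bound $1+M(r,A)\le 2\exp(r^{\beta_1})$ can be divided across; since $\beta_1<\beta_2$ this leaves $\exp(\tfrac12 r^{\beta_2})\le C\,(T(\alpha r,f)\,r^{\epsilon}\log T(\alpha r,f))^{2}$ for all large such $r$, and taking logarithms twice yields $\log\log T(\alpha r,f)\ge(\beta_2-o(1))\log r$. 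Hence $\rho_2(f)\ge\beta_2$, and letting $\beta_2\uparrow\mu(B)$ gives $\rho_2(f)\ge\mu(B)$.

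The step that needs genuine care is arranging that the radii on which $A$ is small lie outside the exceptional set $E_3$: a single radius $r_0$ with $\log M(r_0,A)\le r_0^{\beta_1}$ does not suffice, since $E_3$ could contain it. Here I would exploit the monotonicity of $r\mapsto\log M(r,A)$: if $\log M(r_0,A)\le r_0^{\beta_1}$, then for every $r\in[r_0^{1-\delta},r_0]$ one still has $\log M(r,A)\le\log M(r_0,A)\le r_0^{\beta_1}\le r^{\beta_1/(1-\delta)}$, so the whole interval consists of radii on which $A$ satisfies the required bound with the slightly enlarged exponent $\beta_1/(1-\delta)$, still below $\beta_2$ once $\delta$ is small. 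Such an interval has linear measure comparable to $r_0\to\infty$, while $m(E_3)<\infty$, so it must meet the complement of $E_3$ and thereby supply the radii needed above. This monotonicity-spreading device, which converts the mere $\liminf$ information about $\mu(A)$ into an estimate valid away from the exceptional set, is the only delicate point; the absorption of the $A$-term and the double-logarithm computation are routine.
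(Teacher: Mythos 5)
Your argument is correct, and its skeleton is the same as the paper's: write $B=-f''/f-A\,f'/f$, estimate the logarithmic derivatives by Lemma~[\ref{gunlem}], bound $|A|$ above using $\mu(A)<\beta_1$, bound $|B|$ below at a maximum-modulus point using $\beta_2<\mu(B)$, and extract $\limsup_{r\to\infty}\log\log T(r,f)/\log r\ge\beta_2$ before letting $\beta_2\uparrow\mu(B)$ (the paper's $\alpha,\beta$ play the role of your $\beta_1,\beta_2$). The differences are in the bookkeeping, and they favour you. You invoke part~(iii) of Lemma~[\ref{gunlem}] (exceptional set of finite linear measure) where the paper invokes part~(ii) (finite logarithmic measure); both work. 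More substantively, the paper simply writes $\exp{r^{\beta}}\le|B(re^{\iota\theta})|\le cT(2r,f)^4(1+\exp{r^{\alpha}})$ ``for all $r\notin E_2\cup[0,1]$'', although $\alpha>\mu(A)$ only yields $M(r,A)\le\exp{(r^{\alpha})}$ on an unbounded \emph{sequence} of radii, and nothing in the paper excludes the possibility that this sequence lies inside the exceptional set. Your monotonicity-spreading device --- thickening a good radius $r_0$ into the interval $[r_0^{1-\delta},r_0]$ at the price of replacing $\beta_1$ by $\beta_1/(1-\delta)<\beta_2$, then observing that the interval's measure eventually exceeds $m(E_3)<\infty$ --- is exactly the argument needed to close this gap; it would work just as well against part~(ii), since the interval's logarithmic measure $\delta\log r_0$ also tends to infinity. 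Likewise, your explicit choice of $z_r$ with $f(z_r)\ne 0$ and $|B(z_r)|\ge\tfrac12 M(r,B)$ makes precise what the paper leaves implicit in the symbol $|B(re^{\iota\theta})|$, whose $\theta$ must vary with $r$. In short, you have reproduced the paper's proof and, in doing so, repaired its one genuine weak point.
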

\begin{proof}
Let $\mu<\alpha<\beta<\mu(B)$ where $\alpha$ and $\beta$ are two real numbers.Let $f$ be a non-trivial solutions of the equation (\ref{sde}). For given $\epsilon>0$, from part (ii) of Lemma [\ref{gunlem}], there exists $E_2\subset (1,\infty)$ with finite logarithmic measure and a constant $c>0$ such that
\begin{equation}\label{hypmu}
\left|\frac{f^{(k)}(z)}{f(z)}\right| \leq c (T(2r,f)) ^{2k}\quad k=1,2
\end{equation}
for all $z$ satisfying $|z|=r\notin E_2\cup[0,1]$. From equations (\ref{sde}) and (\ref{hypmu})  we get
\begin{align*}
\exp{r^{\beta}}&\leq|B(re^{\iota\theta})|\leq\left|\frac{f''(re^{\iota \theta})}{f(\iota \theta)}\right|+|A(re^{\iota \theta})|\left|\frac{f'(re^{\iota \theta})}{f(\iota \theta)}\right|\\
& \leq cT(2r,f)^4(1+\exp{r^{\alpha}})
\end{align*}
for all $r\notin E_2\cup[0,1]$. Since $\alpha<\beta$ this implies
$$\limsup_{r\rightarrow \infty}\frac{\log \log T(r,f)}{\log r}\geq \beta$$
as $\beta\leq \mu(B)$ is arbitrary this implies
$$\rho_2(f)\geq \mu(B).$$
\end{proof}
\begin{lem}\label{loword}\cite{wu1}
Suppose $B(z)$ be an entire function with $\mu(B)\in[0,1)$. Then for every $\alpha \in(\mu(B),1)$, there exists a set $E_4\subset[0,\infty)$ such that 

$$\overline{\log dens}(E_4)\geq 1-\frac{\mu(B)}{\alpha} \mbox{ and } m(r)>M(r)\cos{\pi\alpha}$$

 for all $r\in E_4$, where $m(r)=\inf_{|z|=r}\log |B(z)|$ and   \\
\qquad $M(r)=\sup_{|z|=r} \log |B(z)|$.
\end{lem}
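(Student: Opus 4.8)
The statement is Barry's form of the $\cos\pi\alpha$ theorem for the minimum modulus, so I would argue through the subharmonic function $u(z)=\log|B(z)|$ rather than through $B$ directly; note that in the lemma $m(r)$ and $M(r)$ are already the minimum and maximum of $u$ on $|z|=r$. First I would normalize: dividing $B$ by $z^{k}$, where $k$ is the multiplicity of any zero at the origin, alters $m(r)$ and $M(r)$ only by $O(\log r)$ and changes neither the lower order $\mu(B)$ nor the limiting ratio, so I may assume $B(0)\neq0$. The constant $\cos\pi\alpha$ should be recognised as extremal through the harmonic function $\Re(z^{\alpha})=r^{\alpha}\cos(\alpha\theta)$ on $|\theta|<\pi$, whose maximum over $|z|=r$ equals $r^{\alpha}$ and whose minimum equals $r^{\alpha}\cos\pi\alpha$; this is the borderline profile of a function of order exactly $\alpha$, and it pins down where the number $\cos\pi\alpha$ must enter the estimates.

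The main tool I would use is the Fourier expansion $u(re^{\iota\theta})=\sum_{n}c_{n}(r)e^{\iota n\theta}$ on circles, whose coefficients are given explicitly in terms of the zeros of $B$ by the Poisson--Jensen formula; in particular $c_{0}(r)$ is the integrated counting function of the zeros up to $O(1)$, and one has $M(r)\le\sum_{n}|c_{n}(r)|$ together with $m(r)\ge c_{0}(r)-\sum_{n\neq0}|c_{n}(r)|$. The heart of the proof is a sharp inequality bounding the oscillatory mass $\sum_{n\neq0}|c_{n}(r)|$ in terms of $c_{0}(r)$ and of how fast $M(r)$ grows across nearby scales. Recalling that $M(r)$, being a logarithmic maximum, is a nondecreasing convex function of $\log r$, the Ess\'en--Barry machinery converts this into a differential/integral inequality asserting that at every radius where the desired bound $m(r)>M(r)\cos\pi\alpha$ fails, $M(r)$ is forced to increase at a definite rate governed by $\alpha$.

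With that inequality I would argue by a density comparison. Let $E_{4}$ be the good set on which $m(r)>M(r)\cos\pi\alpha$ holds and let $F=[0,\infty)\setminus E_{4}$ be the failure set. Integrating the core inequality over $F\cap[1,R]$ against $dr/r$ and using $\mu(B)=\liminf_{r\to\infty}\frac{\log M(r)}{\log r}$ shows that $\mu(B)\ge\alpha\,\underline{\log dens}(F)$, i.e. $\underline{\log dens}(F)\le\mu(B)/\alpha$; hence $\underline{\log dens}(E_{4})\ge1-\mu(B)/\alpha$. Since the lower logarithmic density of a set never exceeds its upper logarithmic density, the stated bound $\overline{\log dens}(E_{4})\ge1-\mu(B)/\alpha$ follows a fortiori.

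The genuine obstacle is the middle step: producing the sharp inequality that turns ``the minimum modulus drops below $M(r)\cos\pi\alpha$'' into ``$M(r)$ grows at rate $\alpha$'', with the exact constant $\cos\pi\alpha$ and with no loss in the density exponent. Normalization, the Poisson--Jensen/Fourier representation, and the final density bookkeeping are all routine, but this estimate is precisely the content of Barry's theorem and requires the delicate convexity and harmonic-analysis argument on the coefficients $c_{n}(r)$. If one is willing to use it as a black box, an alternative is to invoke the subharmonic-function form of the $\cos\pi\lambda$ theorem of Kjellberg and Barry and read the density statement off directly, which is presumably the route behind the cited reference \cite{wu1}.
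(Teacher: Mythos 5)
First, a point of comparison: the paper does not prove this lemma at all --- it is quoted from Wu and Zhu \cite{wu1}, and in substance it is Barry's $\cos\pi\alpha$ theorem (Barry \cite{pd}). So the route you mention in your last sentence (invoke the Kjellberg--Barry theorem as a black box) is exactly what the paper does. Your sketch of the Ess\'en--Barry machinery is the right framework for an actual proof, and you are candid that the sharp core inequality is the whole difficulty; leaving it unproved is no worse than the paper's own treatment, which is a pure citation.

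However, your density bookkeeping contains a genuine error. From the core inequality you correctly obtain $\mu(B)\ge\alpha\,\underline{\log dens}(F)$ for the failure set $F=[0,\infty)\setminus E_4$, i.e. $\underline{\log dens}(F)\le\mu(B)/\alpha$. But you then conclude $\underline{\log dens}(E_4)\ge 1-\mu(B)/\alpha$, which does not follow: the complementation identities are $\overline{\log dens}(E_4)=1-\underline{\log dens}(F)$ and $\underline{\log dens}(E_4)=1-\overline{\log dens}(F)$, and your argument gives no control whatsoever on $\overline{\log dens}(F)$, which may be close to $1$ even when $\underline{\log dens}(F)$ is small, because the liminf defining $\mu(B)$ only sees a sequence of radii. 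Indeed, the lower-density statement you assert is false in general for functions of finite lower order; this is precisely why Barry's theorem, and the lemma as stated, are formulated with upper logarithmic density, the lower-density variant requiring the order $\rho(B)$ in place of $\mu(B)$. The repair is immediate and makes your ``a fortiori'' step unnecessary: from $\underline{\log dens}(F)\le\mu(B)/\alpha$ conclude directly that $\overline{\log dens}(E_4)=1-\underline{\log dens}(F)\ge 1-\mu(B)/\alpha$, which is the lemma.
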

The above lemma is also true for an entire function $B(z)$ with $\rho(B)<\frac{1}{2}.$ We can get next lemma easily using Lemma [\ref{loword}].
\begin{lem}\label{lowordB}\cite{wu1}
If $B(z)$ be an entire function with $\mu(B)\in(0,\frac{1}{2})$. Then for any $\epsilon>0$ there exists $(r_n)\rightarrow \infty$ such that 
$$|B(r_ne^{\iota \theta})|>\exp{r_n^{\mu(B)-\epsilon}}$$
for all $\theta \in [0,2\pi)$.
\end{lem}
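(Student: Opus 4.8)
The plan is to deduce this lemma directly from Lemma [\ref{loword}] by a careful choice of the auxiliary parameter. Since $\mu(B)\in(0,\tfrac{1}{2})$, I would first fix a real number $\alpha$ with $\mu(B)<\alpha<\tfrac{1}{2}$, which is possible precisely because $\mu(B)<\tfrac{1}{2}$. The decisive consequence of taking $\alpha<\tfrac{1}{2}$ is that $\pi\alpha<\tfrac{\pi}{2}$, so that $\cos\pi\alpha>0$ is a fixed positive constant; this is exactly where the hypothesis $\mu(B)<\tfrac12$ is used.

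With this $\alpha$ fixed, I would apply Lemma [\ref{loword}] to obtain a set $E_4\subset[0,\infty)$ with $\overline{\log dens}(E_4)\geq 1-\tfrac{\mu(B)}{\alpha}>0$ such that $m(r)>M(r)\cos\pi\alpha$ for every $r\in E_4$. Because $E_4$ has positive upper logarithmic density it is unbounded, so I may extract a sequence $r_n\to\infty$ with $r_n\in E_4$. For each such $r_n$ and every $\theta\in[0,2\pi)$ one then has, directly from the definition $m(r)=\inf_{|z|=r}\log|B(z)|$,
$$\log|B(r_ne^{\iota\theta})|\geq m(r_n)>M(r_n)\cos\pi\alpha.$$

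Next I would bound $M(r_n)=\log M(r_n,B)$ from below using the lower order. Fixing an auxiliary $\delta$ with $0<\delta<\epsilon$, the relation $\mu(B)=\liminf_{r\to\infty}\tfrac{\log^+\log^+M(r,B)}{\log r}$ guarantees $\log M(r,B)>r^{\mu(B)-\delta}$ for all sufficiently large $r$, in particular along the sequence $(r_n)$ (note $M(r,B)\to\infty$ since $\mu(B)>0$ forces $B$ transcendental, so $M(r)>0$ eventually). Substituting gives, for large $n$,
$$\log|B(r_ne^{\iota\theta})|>\cos\pi\alpha\cdot r_n^{\mu(B)-\delta}.$$
Since $\cos\pi\alpha>0$ is constant and $\delta<\epsilon$, the ratio $\cos\pi\alpha\cdot r_n^{\mu(B)-\delta}/r_n^{\mu(B)-\epsilon}=\cos\pi\alpha\cdot r_n^{\epsilon-\delta}\to\infty$, so $\cos\pi\alpha\cdot r_n^{\mu(B)-\delta}>r_n^{\mu(B)-\epsilon}$ for all large $n$; discarding finitely many terms of $(r_n)$ then yields $|B(r_ne^{\iota\theta})|>\exp r_n^{\mu(B)-\epsilon}$ for all $\theta\in[0,2\pi)$, as required.

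The only delicate point, and it is mild, is the exponent bookkeeping: one must secure $\cos\pi\alpha>0$ by choosing $\alpha<\tfrac12$, and then pick $\delta$ strictly between $0$ and $\epsilon$ so that the polynomial gain $r_n^{\epsilon-\delta}$ absorbs the fixed constant $\cos\pi\alpha$. No quantitative information about the geometry of $E_4$ is needed beyond its unboundedness, which follows from the positivity of its upper logarithmic density.
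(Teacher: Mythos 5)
Your proposal is correct and takes essentially the same route as the paper, which states this lemma with a citation to Wu--Zhu and the remark that it can be obtained easily from Lemma [\ref{loword}]; your argument simply fills in that deduction (choose $\alpha\in(\mu(B),\tfrac{1}{2})$ so that $\cos\pi\alpha>0$, use the positive upper logarithmic density of $E_4$ to extract $r_n\to\infty$, bound $\log M(r_n,B)>r_n^{\mu(B)-\delta}$ from the definition of lower order, and absorb the constant $\cos\pi\alpha$ via $r_n^{\epsilon-\delta}\to\infty$). The exponent bookkeeping and the observation that positive upper logarithmic density forces $E_4$ to be unbounded are exactly the points one needs, and you have handled both correctly.
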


\begin{lem}\label{jlole}\cite{wu}
Let $B(z)$ be an entire function with $\mu(B)\in[\frac{1}{2},\infty)$. Then there exists a sector $\Omega(\alpha, \beta),$ $\beta-\alpha\geq \frac{\pi}{\mu(B)}$, such that 
$$\limsup_{r\rightarrow \infty}\frac{\log \log |B(re^{\iota \theta|})}{\log r}\geq \mu(B)$$
for all $\theta \in \Omega(\alpha,\beta)$, where $0\leq\alpha<\beta\leq2 \pi.$
\end{lem}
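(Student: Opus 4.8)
The plan is to mirror the order-version recorded in Lemma [\ref{entlem}], replacing the Phragm\'en--Lindel\"of argument tuned to $\rho(A)$ by one tuned to the lower order $\mu:=\mu(B)$. Write $\beta(\theta)=\limsup_{r\to\infty}\frac{\log\log|B(re^{\iota\theta})|}{\log r}$ for the radial growth exponent in the direction $\theta$, and call $\theta$ \emph{fast} if $\beta(\theta)\ge\mu$ and \emph{slow} otherwise. Two facts drive the argument. First, directly from the $\liminf$ defining $\mu$, for every $\epsilon>0$ one has $\log M(r,B)>r^{\mu-\epsilon}$ for \emph{all} sufficiently large $r$, so $B$ is large in modulus on every large circle. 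Second, the hypothesis $\mu\ge\frac12$ is exactly what makes $\pi/\mu\le 2\pi$, so the asserted sector is a genuine proper subsector of the plane. I would argue by contradiction: assume the fast set $G=\{\theta:\beta(\theta)\ge\mu\}$ contains no interval of length $\pi/\mu$.

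Under this assumption every open arc of length $\pi/\mu$ must meet the slow set $E=[0,2\pi)\setminus G$, since otherwise such an arc would lie entirely in $G$. Hence I can select finitely many slow directions $\psi_1<\psi_2<\cdots<\psi_k$ (cyclically, with $\psi_{k+1}=\psi_1+2\pi$) whose consecutive gaps satisfy $\psi_{j+1}-\psi_j<\pi/\mu$ and which together cover the circle, choosing them greedily as far advanced as possible at each step. Because each $\psi_j$ is slow there is $\epsilon_j>0$ with $|B(re^{\iota\psi_j})|\le\exp(r^{\mu-\epsilon_j})$ for all large $r$; taking $\epsilon_0=\min_j\epsilon_j>0$ gives a \emph{uniform} boundary bound $|B(re^{\iota\psi_j})|\le\exp(r^{\mu-\epsilon_0})$ along every ray $\psi_1,\dots,\psi_k$, valid for $r\ge R_0$.

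Next I would run a Phragm\'en--Lindel\"of comparison on each sector $\Omega(\psi_j,\psi_{j+1})$. Since its opening $\delta_j<\pi/\mu$, comparing $B$ against the test function $\exp(cz^{\mu-\epsilon_0/2})$ (whose real part is positive and of the right size throughout a sector of opening below $\pi/(\mu-\epsilon_0/2)$, a condition guaranteed by $\delta_j<\pi/\mu$) should propagate the boundary estimate into the interior and yield $|B(re^{\iota\theta})|\le\exp(Cr^{\mu-\epsilon_0/2})$ for all $\theta\in\Omega(\psi_j,\psi_{j+1})$ and all large $r$. Taking the maximum over the finitely many sectors would give $\log M(r,B)\le Cr^{\mu-\epsilon_0/2}$ for all large $r$, contradicting the lower bound $\log M(r,B)>r^{\mu-\epsilon_0/4}$ from the first paragraph. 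This contradiction establishes the desired sector, along which $\beta(\theta)\ge\mu$ is precisely the claimed inequality.

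The delicate point — and the step I expect to be the main obstacle — is the legitimacy of the Phragm\'en--Lindel\"of step when $\rho(B)>\mu$, in particular when $\rho(B)=\infty$ (which is permitted, since only $\mu(B)$ is assumed finite). The classical principle also requires the growth of $B$ inside the sector to have order below $\pi/\delta_j$; this is harmless when one can take every opening below $\pi/\rho(B)$, but the slow rays are only guaranteed to be $\pi/\mu$-dense, and an interior ray of very fast (even infinite) growth can sit inside some $\Omega(\psi_j,\psi_{j+1})$ without violating the contradiction hypothesis, defeating a naive application. The standard way around this is to abandon radial exponents at all radii and instead work at a sequence of P\'olya peaks $r_n$ of order $\mu$: at such scales the inequality $\log M(t,B)\le(1+o(1))(t/r_n)^{\mu}\log M(r_n,B)$ forces $B$ to behave as a function of order $\mu$ locally in $t$ near $r_n$, so that the comparison with threshold $\mu$ becomes valid in sectors of opening below $\pi/\mu$ irrespective of the global order. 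Carrying the covering-and-comparison argument out at these peak radii, and then letting $n\to\infty$, is where the real work lies.
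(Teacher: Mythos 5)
First, a point of comparison: the paper offers no proof of this lemma at all --- it is quoted from \cite{wu} --- so your attempt can only be judged on its own terms, and on those terms it is a program rather than a proof. Your setup is right (negate the conclusion, extract slow rays $\psi_j$ with $|B(re^{\iota\psi_j})|\le\exp(r^{\mu-\epsilon_j})$ for all large $r$, and contradict the bound $\log M(r,B)>r^{\mu-\epsilon}$, which indeed holds for \emph{all} large $r$ because $\mu$ is a lower order), and you correctly identify the obstruction: the Phragm\'en--Lindel\"of comparison against $\exp(cz^{\mu-\epsilon_0/2})$ in your third paragraph is invalid unless one already has an a priori growth bound inside each sector of order strictly less than $\pi/\delta_j$, and when $\rho(B)>\mu$ (in particular $\rho(B)=\infty$) no such bound exists --- it is essentially what is to be proved. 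The dichotomy form of Phragm\'en--Lindel\"of does not rescue the step either: its second alternative only asserts that $\sup_{\theta\in(\psi_j,\psi_{j+1})}|B(re^{\iota\theta})|$ grows with order at least $\pi/\delta_j$, a statement about a moving direction that is perfectly compatible with every individual ray in the sector being slow, so no contradiction results. Your fourth paragraph names the correct repair --- P\'olya peaks $r_n$ of order $\mu$ (which exist since $\tfrac12\le\mu<\infty$), together with a two-constants/harmonic-measure estimate in the truncated sectors $\Omega(\psi_j,\psi_{j+1};r_n/A_n,A_nr_n)$, where the outer arc has harmonic measure of size $A_n^{-\pi/\delta_j}$ and the peak inequality bounds $\log M(A_nr_n,B)$ by roughly $A_n^{\mu}\log M(r_n,B)$, so that the outer contribution is $o(\log M(r_n,B))$ precisely because $\delta_j<\pi/\mu$ --- but you explicitly decline to carry it out (``where the real work lies''). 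Since that is the only genuinely hard part of the lemma, the proposal has a genuine gap at its center.

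Two further points inside your outline also need repair, and they interact with the missing step. (i) Your greedy covering, as described, only obviously produces cyclic gaps $<\pi/\mu+\eta$; but the harmonic-measure estimate above needs $\mu-\pi/\delta_j<0$, i.e.\ gaps \emph{strictly} below $\pi/\mu$. To get this you must rule out stalling of the greedy sequence: if the chosen slow rays increased to a finite limit $\psi^{*}$, then the open sector $(\psi^{*},\psi^{*}+\pi/\mu)$ would contain no slow ray, contradicting the negated conclusion; some argument of this kind has to be supplied, since the slow set need not be closed and its closure points need not be slow. (ii) The constant $\epsilon_0=\min_j\epsilon_j$ exists only after the finitely many rays are fixed, so any step whose admissible sector opening depends on $\epsilon_0$ risks circularity; once the gaps are strictly below $\pi/\mu$ this circularity disappears (one needs only $\mu-\epsilon_0<\pi/\delta_j$, which is then automatic), which is exactly why point (i) is not cosmetic.
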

Next we give definition of Borel direction and illustrate it with an example:
\begin{defn}\cite{yang}
For a meromorphic function $f(z)$ of order $\rho(f)\in(0,\infty)$ in the finite plane, the ray $\arg{z}=\theta_0$ is called Borel direction of $f$ of order $\rho(f)$ if for any $\epsilon>0$, the equality
 
$$\limsup_{r\rightarrow \infty}\frac{\log{n(\Omega(\theta_0-\epsilon,\theta_0+\epsilon,r), f=a)}}{\log{r}}=\rho(f)$$

holds for every complex number $a$, with atmost two possible exceptions, where $n(\Omega(\theta_0-\epsilon,\theta_0+\epsilon, r),f=a)$ denotes the number of zeros, counting with the multiplicities, of the function $f(z)-a$ in the region $\Omega(\theta_0-\epsilon,\theta_0+\epsilon,r)$ 
\end{defn}
\begin{example}
The entire function $f(z)=e^{z}$ has two Borel directions namely $\frac{\pi}{2}$ and $-\frac{\pi}{2}$.
\end{example}
Next result gives relation between the number of the deficient values and number of Borel directions.
\begin{thm}\label{extr}
Let $f(z)$ be an entire function of order $\rho(f)\in(0,\infty)$. If $p$ is the number of its finite deficient values and $q$ is the number of its Borel directions, then $p\leq \frac{q}{2}$.
\end{thm}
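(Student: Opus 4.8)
The plan is to turn the inequality into a geometric packing statement in which each finite deficient value is forced to occupy a sector whose two bounding rays are Borel directions. The backbone is Nevanlinna theory: the deficiency relation $\sum_{a}\delta(a,f)\le 2$, combined with the fact that for an entire function $\delta(\infty,f)=1$, shows that the finite deficient values $a_1,\dots,a_p$ satisfy $\sum_{j=1}^{p}\delta(a_j,f)\le 1$; in particular $p<\infty$, so the count makes sense. The factor $\tfrac12$ will ultimately come from the elementary fact that a sector has two boundary rays, together with the observation that these rays cannot be shared between two distinct finite deficient values.

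First I would attach to each finite deficient value $a_j$ an angular sector $\Omega_j=\Omega(\alpha_j,\beta_j)$ on which $f$ is, on average, close to $a_j$. Because $\delta(a_j,f)>0$, a spread-type angular estimate guarantees that the set of directions $\theta$ for which $\log^+\frac{1}{|f(re^{\iota\theta})-a_j|}$ is a positive fraction of $T(r,f)$ has angular measure bounded below in terms of $\rho(f)$ and $\delta(a_j,f)$; organizing this set by means of the (finitely many) Borel directions yields a genuine sector $\Omega_j$. The guiding example is $f(z)=e^z$: the only finite deficient value is $0$, the sector $\Omega_1$ is the left half-plane on which $e^z\to 0$, and its two boundary rays $\arg z=\pi/2,\ \arg z=-\pi/2$ are precisely the two Borel directions of the paper's example.

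Next I would establish the two structural facts that drive the count. First, the sectors $\Omega_j$ are pairwise disjoint, since $f$ cannot be simultaneously close to two distinct finite values on an overlap. Second, each boundary ray of each $\Omega_j$ is a Borel direction: crossing such a ray one passes from a regime where the value $a_j$ is taken with less than maximal frequency to a regime where $f$ takes every value, with at most two exceptions, at the maximal rate of order $\rho(f)$, and the separating ray is forced to be Borel. Moreover, since $\infty$ is itself deficient for an entire function, between two consecutive finite-value sectors there must lie a region in which $|f|$ is large, so no two of the $\Omega_j$ are adjacent; hence their $2p$ boundary rays are distinct Borel directions and $q\ge 2p$, i.e. $p\le q/2$.

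The main obstacle is the localization in the previous paragraph: rigorously showing that the boundary of a deficient sector is a Borel direction, and that finite-value sectors cannot abut one another. This is exactly where the fine angular value-distribution theory is needed — estimating the angular counting function $n(\Omega(\theta-\epsilon,\theta+\epsilon,r),f=a)$ and the angular characteristic on either side of the candidate ray, so that deficiency on one side and maximal-order value-taking on the other pin the ray down. Once this localization is available, the deficiency relation, the disjointness of the sectors, and the final cyclic count around the origin are comparatively routine.
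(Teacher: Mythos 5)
Your proposal is a plan rather than a proof: the two steps that carry all the weight are precisely the ones you set aside as ``the main obstacle.'' (For calibration: the paper itself does not prove this statement either --- it is Yang's classical inequality, quoted from the literature \cite{yang} so that ``extremal to Yang's inequality'' can be defined --- so the benchmark is the classical argument, whose skeleton you reproduce but whose substance you omit.) First, attaching to each finite deficient value $a_j$ a genuine sector, bounded by Borel directions, on which $\log\frac{1}{|f-a_j|}$ is comparable to $T(r,f)$ is not a routine consequence of a spread-type estimate; it is a deep theorem of angular value-distribution theory, a version of which the paper quotes without proof as Lemma [\ref{defic}] (due to S.~J.~Wu). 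Moreover the correct architecture is the reverse of yours: one does not build $\Omega_j$ from the set where $f$ is close to $a_j$ and then try to prove its boundary rays are Borel (this localization is exactly what you cannot do); one takes the $q$ Borel directions as given, lets them cut the plane into $q$ sectors --- so that the boundary rays are Borel \emph{by construction} --- and proves that each deficient value forces the estimate on at least one of these sectors. Second, your non-adjacency step is not an argument: ``since $\infty$ is itself deficient \ldots there must lie a region in which $|f|$ is large'' does not follow from $\delta(\infty,f)=1$, which is a statement about integrals over circles, not about the angular location of large values. The standard way to rule out two adjacent finite-value sectors $\Omega(\phi_{i-1},\phi_i)$ and $\Omega(\phi_i,\phi_{i+1})$ is Phragm\'en--Lindel\"of in a thin wedge $\Omega(\phi_i-\epsilon,\phi_i+\epsilon)$ of opening $2\epsilon<\pi/\rho(f)$: the interior estimates make $f$ bounded on the rays $\arg z=\phi_i\pm\epsilon$, hence bounded throughout the wedge, and then every value $a$ with $|a|>\sup|f|$ is omitted near the ray $\arg z=\phi_i$, contradicting the definition of a Borel direction, which allows at most two exceptional values.

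There is also a concrete error at the outset: $\sum_{j}\delta(a_j,f)\le 1$ does \emph{not} imply $p<\infty$. Arakelyan constructed entire functions of finite order with infinitely many finite deficient values, each of small deficiency; finiteness of $p$ is a conclusion of the theorem (when $q<\infty$ --- if $q=\infty$ the inequality is vacuous), not something the deficiency relation hands you. So the correct proof runs: partition the plane by the $q$ Borel directions; show (the hard, quoted step) that each finite deficient value occupies one of the resulting sectors in the sense above; note distinct values occupy distinct sectors; rule out adjacency by the Phragm\'en--Lindel\"of argument; and conclude $p\le q/2$ by the cyclic count. Your outline names these stages but proves none of the nontrivial ones.
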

 If equality $p=\frac{q}{2}$ holds in Theorem [\ref{extr}] then function $f(z)$ is called extremal to Yang's inequality. For example:
\begin{example}
Consider the entire function $f(z)=\int_{0}^{z}e^{-t^n}dt$ of order $n$ has $n$ number of finite deficient values equal to 
$$a_k=e^{\frac{i2\pi k}{n}}
\int_{0}^{\infty}e^{-t^n}dt, k=0,1,2,\ldots, n-1$$ 
and $2n$ Borel directions equal to 
$$\Phi_i=\frac{(2i-1)\pi}{2n}, i=0,1,2,\ldots, 2n-1.$$ Since $p=\frac{q}{2}$ therefore this fuction is extremal to Yang's inequality.
\end{example}

Next suppose that $B(z)$ be an entire function extremal to Yang's inequality and let $\arg{z}=\Phi_i$, $i=1,2,3,\ldots,q$  denote the Borel directions of the function $B(z)$ such that $0\leq \Phi_1<\Phi_2<\ldots<\Phi_q<\Phi_{q+1}=\Phi_1+2\pi$. The following lemma is due to \cite{sjwu}:
\begin{lem}\label{defic}
Suppose that $B(z)$ be an entire function extremal to Yang's inequality and $b_i$, $i=1,2,3,\ldots, \frac{q}{2}$ be the deficient values of $B(z)$. Then for each $b_i$, $i=1,2,3,\ldots,\frac{q}{2}$ there exists a corresponding sector $\Omega(\Phi_i,\Phi_{i+1})$ such that for every $\epsilon>0$ 
\begin{equation}\label{defic}
\log{\frac{1}{|B(z)-b_i|}}>C(\Phi_i,\Phi_{i+1}, \epsilon, \delta(b_i,B))T(|z|,B)
\end{equation} 
holds for all $z\in \Omega(\Phi_i+\epsilon,\Phi_{i+1}-\epsilon, r,\infty)$, $C(\Phi_i,\Phi_{i+1}, \epsilon, \delta(b_i,B))$ is a positive constant depending on $\Phi_i, \Phi_{i+1}, \epsilon$ and $\delta(b_i,B)$.
\end{lem}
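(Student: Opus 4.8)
The plan is to exploit the rigidity of the equality case in Yang's inequality (Theorem \ref{extr}) together with the angular Nevanlinna theory of $B(z)$. Since $B(z)$ is extremal, the $q$ Borel directions $\arg z=\Phi_i$ and the $q/2$ deficient values $b_i$ are forced into a tight correspondence: the Borel directions partition the plane into sectors, and for each deficient value $b_i$ one of these sectors $\Omega(\Phi_i,\Phi_{i+1})$ is distinguished as the one in which $b_i$ is approached. First I would record the analytic meaning of deficiency, namely $\delta(b_i,B)=\liminf_{r\to\infty} m(r,1/(B-b_i))/T(r,B)>0$, together with the fact that for an entire function the $b_i$-points are counted by $N(r,1/(B-b_i))$. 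The guiding idea is that, because the counting function of $b_i$-points attains the maximal growth rate $\rho(B)$ only along Borel directions, in the interior of the matching sector the zeros of $B-b_i$ are comparatively sparse, so that $\log(1/|B-b_i|)$ is not destroyed by nearby zeros and instead inherits the large proximity mass carried by $b_i$.

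Next I would localize the proximity function to the sector. Writing $g=1/(B-b_i)$, one introduces the Nevanlinna angular characteristics (the functionals $A_{\alpha}^{\beta}$, $B_{\alpha}^{\beta}$, $C_{\alpha}^{\beta}$ associated to the angle) for $g$ on $\Omega(\Phi_i,\Phi_{i+1})$. Using the extremal correspondence, the angular proximity term for $b_i$ on this sector is a positive fraction of $T(r,B)$, while the angular counting term has strictly smaller growth because the $b_i$-points concentrate on the bounding Borel directions rather than in the interior. This step converts the global deficiency into a sectorial statement: on average over the directions $\theta\in(\Phi_i,\Phi_{i+1})$, the quantity $\log^+(1/|B(re^{\iota\theta})-b_i|)$ is comparable to $T(r,B)$.

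The decisive step is to upgrade this angular-average estimate to the pointwise lower bound claimed in $(\ref{defic})$, uniformly on the shrunken sector $\Omega(\Phi_i+\epsilon,\Phi_{i+1}-\epsilon)$. For this I would apply a two-constants theorem (equivalently a Phragm\'en--Lindel\"of and harmonic-measure argument) to the subharmonic function $\log|B-b_i|$ on $\Omega(\Phi_i,\Phi_{i+1})$: the harmonic measure of the two bounding rays, seen from a point at angular distance at least $\epsilon$ from the boundary, decays, so the small boundary contribution cannot prevent $\log(1/|B-b_i|)$ from being a definite multiple of $T(r,B)$ in the interior. Tracking the dependence of the harmonic measure on $\epsilon$ and of the proximity fraction on $\delta(b_i,B)$ then produces the positive constant $C(\Phi_i,\Phi_{i+1},\epsilon,\delta(b_i,B))$.

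I expect the main obstacle to be precisely this passage from averaged to pointwise control while keeping the bound uniform up to the shrunken boundary. The difficulty is twofold: one must first secure the deep structural input that each deficient value of a Yang-extremal function is approached in its own sector with the $b_i$-points confined near the Borel directions, and then one must run the angular and harmonic-measure estimates carefully enough that $C$ depends only on $\Phi_i$, $\Phi_{i+1}$, $\epsilon$ and $\delta(b_i,B)$, with no hidden dependence on $r$. The first is where the full strength of the extremal theory of \cite{sjwu} enters; the second is a delicate but essentially standard estimate once the sectorial picture is in place.
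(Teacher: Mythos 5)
The first thing to say is that the paper contains no proof of this lemma: it is stated as a known result, explicitly attributed to Wu \cite{sjwu} (``The following lemma is due to \cite{sjwu}''), and then used as a black box in the proofs of Theorems [\ref{bordi}], [\ref{bordihy}] and [\ref{exth2}]. So there is no internal proof to compare your attempt with; your sketch can only be judged against the literature it implicitly tries to reconstruct.

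Judged that way, your overall strategy is the right one in spirit --- sectorial (angular) Nevanlinna theory combined with a two-constants/harmonic-measure argument is indeed how pointwise bounds of the form $\log(1/|B(z)-b_i|)>C\,T(|z|,B)$ on shrunken sectors are derived. But your outline has a genuine circularity at its decisive step. The structural fact you call the ``deep structural input'' --- that extremality forces a correspondence between the $q/2$ deficient values and sectors cut out by consecutive Borel directions, with $B$ exponentially close to $b_i$ precisely in the sector matched to $b_i$ --- is not an auxiliary ingredient; it \emph{is} the substance of the lemma. Your second and third steps (sectorial proximity comparable to $T(r,B)$, then propagation into $\Omega(\Phi_i+\epsilon,\Phi_{i+1}-\epsilon)$) both presuppose this correspondence, and you defer its proof to ``the full strength of the extremal theory of \cite{sjwu}'', which is exactly the reference the paper cites for the lemma as a whole. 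A self-contained proof would have to establish that correspondence: show that the set where $|B-b_i|$ is exponentially small relative to $T$ cannot straddle a Borel direction or occupy two different sectors, and that distinct deficient values claim distinct sectors; this is where the equality $p=q/2$ and Wu's theory of functions of finite lower order do the real work, and without it your harmonic-measure step has nothing to propagate. A further soft spot: your claim that the $b_i$-points of $B$ ``concentrate on the bounding Borel directions'' does not follow from the definition of Borel direction (such a direction carries maximal exponent of $a$-points for all $a$ with at most two exceptions, and $b_i$ could a priori be one of the exceptions), nor from deficiency alone, which controls $N(r,1/(B-b_i))$ globally but says nothing about where in the plane those points sit. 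In short, the proposal is a reasonable roadmap of the known argument, but the hardest part is assumed rather than proved.
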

\begin{lem}\label{expinc}\cite{extremal}
Suppose that $B(z)$ be an entire function extremal to Yang's inequality and there exists $\arg{z}=\theta$ with $\Phi_i<\theta<\Phi_{i+1},$ $1\leq j\leq q$ such that 
\begin{equation}\label{expinc}
\limsup_{r\rightarrow \infty}\frac{\log{\log{|B(re^{\iota \theta})|}}}{\log{r}}=\rho(B).
\end{equation}
Then $\Phi_{i+1}-\Phi_i=\frac{\pi}{\rho(B)}$.
\end{lem}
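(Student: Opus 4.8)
The plan is to pin down the exact opening of the growth sector $\Omega(\Phi_i,\Phi_{i+1})$ by squeezing $\Phi_{i+1}-\Phi_i$ between $\pi/\rho$ and $\pi/\rho$, where I write $\rho:=\rho(B)$. The hypothesis $\limsup_{r\to\infty}\frac{\log\log|B(re^{\iota\theta})|}{\log r}=\rho$ for some $\theta\in(\Phi_i,\Phi_{i+1})$ says that $B$ attains its maximal order of growth along an interior ray, so $\Omega(\Phi_i,\Phi_{i+1})$ is a growth sector rather than one of the $q/2$ deficiency sectors produced by Lemma [\ref{defic}]. Recall from Lemma [\ref{defic}] that to each deficient value $b_j$ there corresponds a distinct sector in which $\log\frac{1}{|B-b_j|}>C\,T(|z|,B)$; since $T(r,B)=r^{\rho+o(1)}$, on such a sector $|B-b_j|\le\exp(-C r^{\rho-\epsilon})$, so $B$ is bounded there and approaches $b_j$ at order $\rho$. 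These deficiency estimates, together with the Phragm\'en--Lindel\"of principle, are the two ingredients I would use throughout.

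First I would establish $\Phi_{i+1}-\Phi_i\ge\pi/\rho$. By the alternating arrangement of growth and deficiency sectors for extremal functions (to which I return below), the two sectors adjacent to $\Omega(\Phi_i,\Phi_{i+1})$ are deficiency sectors, so on rays just inside them $B$ is bounded by Lemma [\ref{defic}]. Enlarging the growth sector slightly to $\Omega(\Phi_i-\epsilon',\Phi_{i+1}+\epsilon')$, whose two bounding rays now lie inside those deficiency sectors, makes $B$ bounded on the whole boundary of the enlarged sector. If one had $\Phi_{i+1}-\Phi_i<\pi/\rho$, then for $\epsilon'$ small enough the opening $\Phi_{i+1}-\Phi_i+2\epsilon'$ would still be $<\pi/\rho$; since $B$ has order $\rho$, which is strictly less than $\pi$ divided by this opening, the Phragm\'en--Lindel\"of principle would force $B$ to be bounded on the enlarged sector, contradicting the maximal growth along $\theta$. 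Hence $\Phi_{i+1}-\Phi_i\ge\pi/\rho$.

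For the reverse inequality I would show that every one of the $q$ sectors cut out by the Borel directions has opening at least $\pi/\rho$, and then close the estimate by a counting argument. For a deficiency sector with value $b_j$ the function $g_j:=1/(B-b_j)$ is analytic there (Lemma [\ref{defic}] forces $B-b_j$ to be zero-free in the interior), grows at order $\rho$ because $1/|B-b_j|>\exp(C r^{\rho-\epsilon})$, and is bounded on the boundary of the sector: a neighbouring growth sector makes $g_j\to0$, while a neighbouring deficiency sector with value $b_k\ne b_j$ makes $g_j\to 1/(b_k-b_j)$. The same Phragm\'en--Lindel\"of argument as above, applied to $g_j$, then gives opening $\ge\pi/\rho$ for every deficiency sector, and the argument of the previous paragraph gives opening $\ge\pi/\rho$ for every growth sector. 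Invoking the structure of functions extremal to Yang's inequality (\cite{sjwu}, \cite{extremal}) --- namely that the growth and deficiency sectors alternate and that there are exactly $q=2\rho$ Borel directions --- the $q$ openings are each $\ge\pi/\rho$ and sum to $2\pi=q\cdot(\pi/\rho)$, which forces each opening, and in particular $\Phi_{i+1}-\Phi_i$, to equal $\pi/\rho$.

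The delicate point is entirely in the upper bound. Phragm\'en--Lindel\"of, being a one-sided principle, only ever yields \emph{lower} bounds $\ge\pi/\rho$ on the openings; pinning the exact value therefore cannot come from growth considerations alone and must use the rigid global geometry of extremal functions, namely the alternation of sector types and the count $q=2\rho$ of Borel directions. I would also need to be careful with two technical matters: transferring boundedness from the interiors of the neighbouring sectors, where Lemma [\ref{defic}] applies on $\Omega(\Phi_j+\epsilon,\Phi_{j+1}-\epsilon)$, to genuine boundary rays, which is handled by the slight enlargement above; and justifying the analyticity of $g_j$ from the zero-free property guaranteed by Lemma [\ref{defic}].
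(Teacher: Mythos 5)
The paper offers no proof to compare against here: Lemma [\ref{expinc}] is quoted verbatim from \cite{extremal}, so your argument has to stand on its own. Its first half essentially does. Granting the alternation of deficiency sectors and growth sectors (a structural fact which you cite rather than prove, and which in the source literature is established alongside this very lemma; to avoid circularity you should at least prove that two deficiency sectors cannot be adjacent --- e.g.\ otherwise $(B-b)(B-b')$ would be bounded, by Phragm\'en--Lindel\"of, on a thin wedge around the common boundary ray, making $B$ bounded near that ray and contradicting that it is a Borel direction), the lower bound works: $B$ is bounded on rays slightly inside the two neighbouring deficiency sectors, and if $\Phi_{i+1}-\Phi_i<\pi/\rho(B)$ then Phragm\'en--Lindel\"of on the slightly enlarged sector would bound $B$ throughout it, contradicting the maximal growth along $\theta$.

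The upper bound, however, contains several steps that fail. (1) Lemma [\ref{defic}] asserts only the upper estimate $|B(z)-b_j|<\exp\left(-C\,T(|z|,B)\right)$; this does not make $B-b_j$ zero-free (a deficient value has sparse preimages, not necessarily none), so $g_j=1/(B-b_j)$ need not be analytic in the sector. (2) Even granting analyticity, Phragm\'en--Lindel\"of needs an a priori finite-order upper bound on $|g_j|$ inside the sector, i.e.\ a lower bound on $|B-b_j|$, and none is available --- the deficiency estimate says precisely that $|g_j|$ is enormous. (3) Your boundary control assumes $|B|\to\infty$ along rays interior to a neighbouring growth sector; but maximal growth is only a $\limsup$ along a single ray, so it gives no pointwise lower bound on $|B|$ even on that ray (where $|B|$ may be small along a sequence $r_n\to\infty$), let alone on the other rays of a non-deficiency sector. (4) The counting step needs \emph{every} non-deficiency sector to have opening at least $\pi/\rho(B)$, yet your Phragm\'en--Lindel\"of argument applies only to sectors known to contain a ray of maximal growth, which is exactly the one sector in the hypothesis; and the count $q=2\rho(B)$ that you invoke is itself a product of this same structure theory, so invoking it is circular. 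The standard route avoids all of this: by the theory of Borel directions (Zhang \cite{zh}; Yang--Zhang \cite{yangch}), if an entire function of order $\rho\in(0,\infty)$ attains its order along a ray interior to an angular domain of opening greater than $\pi/\rho$, then that domain contains a Borel direction of order $\rho$ in its interior. Since $\Phi_i$ and $\Phi_{i+1}$ are consecutive Borel directions, the interior of $\Omega(\Phi_i,\Phi_{i+1})$ contains none, whence $\Phi_{i+1}-\Phi_i\le\pi/\rho(B)$ at once; combined with your lower bound this proves the lemma for the one sector in question, with no need for the global equal-opening picture.
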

\begin{lem}\label{expincr}\cite{yangch}
Suppose that $f(z)$ be an entire function with $0<\rho(f)<\infty$ and $\Omega(\psi_1,\psi_2)$ be a sector with $\psi_2-\psi_1<\frac{\pi}{\rho(f)}$. If there exists a Borel direction $\arg{z}=\Phi$ in $\Omega(\psi_1,\psi_2)$ then there exists atleast one of the rays $\arg{z}=\psi_i$, $i=1$ or $2$ such that 
\begin{equation}\label{expincr}
\limsup_{r\rightarrow \infty}\frac{\log{\log{|f(re^{\iota \psi_i})|}}}{\log{r}}=\rho(f)
\end{equation}
\end{lem}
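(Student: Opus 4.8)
The plan is to argue by contradiction, combining the Phragm\'en--Lindel\"of principle with the classical fact that a Borel direction of order $\rho(f)$ forces maximal growth in every sector surrounding it. Suppose the conclusion fails, so that on \emph{both} boundary rays
$$\beta_i:=\limsup_{r\rightarrow\infty}\frac{\log\log|f(re^{\iota\psi_i})|}{\log r}<\rho(f),\qquad i=1,2.$$
Put $\beta=\max\{\beta_1,\beta_2\}$ and fix a real number $\rho'$ with $\beta<\rho'<\rho(f)$. Since $\psi_2-\psi_1<\pi/\rho(f)<\pi/\rho'$, the opening $\alpha:=\psi_2-\psi_1$ of the sector is still strictly smaller than $\pi/\rho'$. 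By the definition of $\beta_i$, for a sufficiently small $\epsilon>0$ we obtain $|f(re^{\iota\psi_i})|\leq\exp(r^{\rho'})$ on both boundary rays for all $r\geq R_0$, while the global order yields the interior a priori bound $|f(z)|\leq\exp(|z|^{\rho(f)+\delta})$ with $\delta$ chosen so small that $\rho(f)+\delta<\pi/\alpha$ (possible because $\alpha<\pi/\rho(f)$).

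The core of the argument is a Phragm\'en--Lindel\"of transfer of the boundary growth order into the interior of $S=\Omega(\psi_1,\psi_2)$. After rotating so that $S$ is symmetric about the positive real axis, I would fix $\kappa'$ with $\rho'<\kappa'<\pi/\alpha$ and study the auxiliary function $g(z)=f(z)\exp(-cz^{\kappa'})$, the branch of $z^{\kappa'}$ being single valued since $\alpha<\pi$. Because $\kappa'\alpha/2<\pi/2$, the real part $\Re(z^{\kappa'})=|z|^{\kappa'}\cos(\kappa'\arg z)$ stays bounded below by a positive multiple of $|z|^{\kappa'}$ throughout $\overline S$, so the factor $\exp(-cz^{\kappa'})$ (with $\kappa'>\rho'$) overwhelms the boundary bound $\exp(r^{\rho'})$ and makes $g$ bounded on $\partial S$, while $g$ retains interior order at most $\rho(f)+\delta<\pi/\alpha$. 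Applying Phragm\'en--Lindel\"of on the sector of opening $\alpha<\pi/\kappa'$ gives $|g|\leq M_1$ in $S$, whence $|f(z)|\leq M_1\exp(c|z|^{\kappa'})$ on $\overline S$. Letting $\kappa'\downarrow\rho'$ shows that $f$ has growth order at most $\rho'<\rho(f)$ in the closed sector $\overline S$.

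Finally I would derive the contradiction. Choosing $\epsilon>0$ with $[\Phi-\epsilon,\Phi+\epsilon]\subset(\psi_1,\psi_2)$, the bound just obtained shows $f$ has order at most $\rho'<\rho(f)$ in the sub-sector $\Omega(\Phi-\epsilon,\Phi+\epsilon)$. By the standard relation between the growth order in a closed angular region and the density of the $a$-points contained in a strictly interior sub-sector, this forces
$$\limsup_{r\rightarrow\infty}\frac{\log n(\Omega(\Phi-\epsilon,\Phi+\epsilon,r),f=a)}{\log r}\leq\rho'<\rho(f)$$
for every complex $a$, contradicting the assumption that $\arg z=\Phi$ is a Borel direction of $f$ of order $\rho(f)$. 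Hence the negation is untenable and the limsup must equal $\rho(f)$ on at least one boundary ray $\arg z=\psi_i$. I expect this last step to be the main obstacle: one must invoke the classical result (see \cite{yang}) that order strictly below $\rho(f)$ throughout a sector precludes a Borel direction of full order $\rho(f)$ inside it, since this is precisely what links the purely metric Phragm\'en--Lindel\"of estimate to the value-distribution definition of a Borel direction.
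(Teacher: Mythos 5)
The paper does not actually prove this lemma: it is imported verbatim from Yang--Zhang \cite{yangch}, so there is no internal proof to compare against and your attempt has to be judged against the classical argument. Your first half is sound. Since $\alpha=\psi_2-\psi_1<\pi/\rho(f)$, one can indeed fix $\beta<\rho'<\rho(f)$ and $\rho'<\kappa'<\pi/\alpha$, and the auxiliary function $g(z)=f(z)\exp(-cz^{\kappa'})$ is bounded on both rays while having a priori order $\rho(f)+\delta<\pi/\alpha$ inside, so Phragm\'en--Lindel\"of legitimately propagates the sub-maximal ray growth to $|f(z)|\leq M_1\exp\left(c|z|^{\kappa'}\right)$ throughout $\overline{S}$; this is the same mechanism that underlies Wang's Lemma [\ref{entlem}] quoted in the paper, and the hypothesis $\psi_2-\psi_1<\pi/\rho(f)$ enters exactly where you use it.

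The genuine gap is your final step, and it is precisely the one you flagged. The displayed claim that order at most $\rho'$ in the sector forces
$$\limsup_{r\rightarrow \infty}\frac{\log n(\Omega(\Phi-\epsilon,\Phi+\epsilon,r),f=a)}{\log r}\leq\rho'$$
\emph{for every} $a$ is false: pulling back a Blaschke product under $w=z^{\pi/\alpha}$ yields a function analytic and \emph{bounded} in a sector of opening $\alpha$ whose zeros lie on the central ray with counting exponent as large as $\pi/\alpha$, and here $\pi/\alpha>\rho(f)>\rho'$, so metric smallness in an angle simply does not bound the per-value density of $a$-points. What actually contradicts a Borel direction is the all-but-two-values statement: sub-maximal growth in an angle forces at least three values whose counting exponent is below $\rho(f)$ --- equivalently, the theorem that an entire function of order $\rho(f)$ has order exactly $\rho(f)$ in every angle containing a Borel direction of order $\rho(f)$. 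That is a nontrivial theorem of Valiron--Yang--Zhang type (see \cite{zh}, \cite{yangch}), proved with angular value-distribution tools (disk coverings, Jensen with a reference value supplied by a second/third value, Schottky-type and two-constants propagation to handle regions where $f$ clings to a deficient value), and it is essentially the substance of the very reference the paper cites for this lemma. So as written your argument either fails (in the per-value form) or tacitly assumes the main content of the result; if you instead quote that angular-growth theorem explicitly, your Phragm\'en--Lindel\"of reduction does complete a correct proof of the ray statement.
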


Here we give a conjecture due to Denjoy \cite{den} which gives a relation between the order of an entire function and its finte asymptotic values:
\\

 {\bf{Denjoy's Conjecture:}}  Suppose $g(z)$ be an entire function of finite order and $g$ has $p$ distinct finite asymptotic values then $p\leq 2\rho(g)$.

An entire function is said to be extremal to Denjoy's conjecture if equality holds in above inequality. For example:
\begin{example}[\cite{zh}, page no. 210]
Let 
$$g(z)=\int_{0}^{z} \frac{sin{t^p}}{t^p}dt$$
where $p\in\mathbb{N}$. Then $\rho(g)=p$ and $g(z)$ has $2p$ distinct finite asymptotic values, namely
$$a_j=e^{\frac{j\pi \iota}{p}}\int_{0}^{\infty}\frac{sin{r^p}}{r^p}dr$$
for $j=1,2,\ldots 2p$.
\end{example}
Following lemma gives the property of an entire function extremal to Denjoy's conjecture.
\begin{lem}\label{denlem}\cite{zh}
Let $g(z)$ be an entire function extremal to Denjoy's conjecture then for any $\theta\in(0,2\pi)$ either $\arg{z}=\theta$ is a Borel direction of $g(z)$ or there exists a constant $\sigma\in(0,\frac{\pi}{4})$ such that 
\begin{equation}
\lim_{{|z|\rightarrow \infty}_{ z \in (\Omega(\theta-\sigma,\theta+\sigma)\setminus E_5)}} \frac{\log \log |g(z)|}{\log{|z|}}
=\rho(g)
\end{equation}
where $E_5\subset \Omega(\theta-\sigma,\theta+\sigma)$ such that 
$$\lim_{r\rightarrow \infty}m(\Omega(\theta-\sigma,\theta+\sigma; r, \infty)\cap E_5)=0$$
\end{lem}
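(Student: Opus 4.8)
The plan is to read the dichotomy off the rigid asymptotic geometry that extremality in the Denjoy-Carleman-Ahlfors theorem forces on $g$. Write $\rho=\rho(g)\in(0,\infty)$; extremality means that $g$ attains the maximal admissible number $p:=2\rho$ (necessarily a positive integer) of distinct finite asymptotic values $a_1,\dots,a_p$, each reached along an asymptotic path. The first and decisive step is to invoke the structure theorem for such extremal functions: after straightening the asymptotic paths to asymptotically equal rays, there are $p$ directions $\beta_1<\beta_2<\dots<\beta_p<\beta_1+2\pi$ cutting the plane into $p$ sectors $\Omega(\beta_j,\beta_{j+1})$ of equal opening $\pi/\rho$, such that along each bounding ray $\arg z=\beta_j$ the function tends to the finite value $a_j$, while each such ray is at the same time a Borel direction of $g$. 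This is exactly the conclusion produced by the Carleman-Ahlfors length-area method in the extremal case, and it is the substantive input I would quote from \cite{zh}.

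The second step records the growth of $g$ inside a tract. Because $\rho(g)=\rho$, the elementary bound $|g(z)|\le M(|z|,g)\le\exp\!\bigl(|z|^{\rho+\epsilon}\bigr)$ already gives $\limsup_{r\to\infty}\log\log|g(re^{\iota\phi})|/\log r\le\rho$ in every direction $\phi$. The matching lower bound is the content of the structure theorem inside the open tracts: away from the bounding rays $|g|$ grows at the maximal rate, and on a closed sub-sector the minimum modulus is comparably large except on a thin set that absorbs the zeros of $g-a$ and narrow neighbourhoods of the $\beta_j$. Combining the two bounds, for each closed sub-sector of an open tract there is an exceptional planar set $E_5$, negligible in the sense $\lim_{r\to\infty}m(\Omega(\cdot\,;r,\infty)\cap E_5)=0$, off which $\log\log|g(z)|/\log|z|\to\rho$.

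With these two facts in hand the argument is a bookkeeping of sectors. Fix $\theta\in(0,2\pi)$. If $\theta$ is a Borel direction of $g$, the first alternative holds and there is nothing further to do. Otherwise, since $g$ has finite order it has only finitely many Borel directions, and as the rays $\beta_j$ are among them, $\theta$ lies in the interior of exactly one tract $\Omega(\beta_j,\beta_{j+1})$ and coincides with no Borel direction. Each tract has opening $\pi/\rho$, whereas I only require a half-width $\sigma<\pi/4$, so I may choose $\sigma\in(0,\pi/4)$ small enough that the closed sub-sector $\overline{\Omega(\theta-\sigma,\theta+\sigma)}$ lies inside $(\beta_j,\beta_{j+1})$ and meets no Borel direction. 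Applying the second step to this sub-sector produces
$$\lim_{\substack{|z|\to\infty\\ z\in\Omega(\theta-\sigma,\theta+\sigma)\setminus E_5}}\frac{\log\log|g(z)|}{\log|z|}=\rho,$$
which is precisely the second alternative.

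The main obstacle is concentrated entirely in the first step, namely proving that extremality pins the asymptotic tracts down to genuine sectors of opening exactly $\pi/\rho$, with maximal interior growth off a thin set and with Borel directions along their boundaries. Everything afterwards is the elementary comparison of the upper and lower growth regimes together with the choice of $\sigma$; the real analytic work is the extremal Carleman-Ahlfors estimate carried out in \cite{zh}.
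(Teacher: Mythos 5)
The paper contains no proof of this lemma to compare yours against: Lemma~[\ref{denlem}] is imported verbatim from Zhang's monograph \cite{zh} as a preliminary result. Judged as a self-contained argument, your proposal has a genuine gap, and it is exactly where you say it is: the entire content is concentrated in your ``first step'', the structure theorem asserting that extremality pins down $p=2\rho(g)$ equally spaced directions of opening $\pi/\rho(g)$ which simultaneously carry the asymptotic values and are Borel directions, with maximal growth off a thin set in the intervening sectors. That statement is not an auxiliary input — it visibly implies the lemma, as your own bookkeeping shows — and you do not prove it; you quote it from the very source \cite{zh} from which the lemma itself is cited. So what you have is a plausible heuristic reconstruction (the Phragm\'en--Lindel\"of/length--area count: $p$ growth channels separated by the asymptotic tracts, each channel needing angular opening at least $\pi/\rho$, and $p\cdot\pi/\rho=2\pi$ exactly in the extremal case $p=2\rho$, forcing the tracts to squeeze onto equally spaced rays), not a proof. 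Making the channels genuinely sector-like, locating Borel directions on their boundaries, and producing the exceptional set $E_5$ with $m(\Omega(\theta-\sigma,\theta+\sigma;r,\infty)\cap E_5)\rightarrow 0$ — the allowance that exists precisely because the tracts are only asymptotically straight — is the analytic work carried out in \cite{zh}. (A small terminological slip: the sectors of opening $\pi/\rho$ where $|g|$ is maximal are the growth channels; the asymptotic tracts, i.e.\ components of $\{|g-a_j|<\epsilon\}$, are the thin regions squeezing onto the boundary rays, not the sectors themselves.)

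One concrete error in the bookkeeping step: you justify the choice of $\sigma$ by asserting that an entire function of finite order has only finitely many Borel directions. This is false in general — the set of Borel directions of an entire function of finite positive order is a nonempty closed subset of $[0,2\pi)$ which can be quite large, even the whole circle; which closed sets occur is the subject of Yang and Zhang \cite{yangch}, cited in this paper. The step is repairable without finiteness: since the set of Borel directions is closed, any $\theta$ that is not a Borel direction has a whole closed subsector $\overline{\Omega(\theta-\sigma,\theta+\sigma)}$ disjoint from it; alternatively, for extremal functions the structure theorem you invoke would itself confine the Borel directions to the $2\rho(g)$ boundary rays. Note also that once the interior-growth statement with its exceptional set is granted on subsectors of a channel, your second alternative follows without any avoidance of Borel directions at all, so the finiteness claim is doing no essential work — but as written it is a false assertion and should be removed.
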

\section{Proof of Theorem [\ref{hypthm}]}

\begin{proof}

\begin{enumerate}
\item We know that all solutions $f(\not \equiv 0)$ of the equation (\ref{sde}) are of infinite order, when $\rho(B)\neq \rho(A)$ by Theorem [\ref{prethm}]. Then from part (iii) of Lemma [\ref{gunlem}] for $\epsilon>0$, there exists a set $E_3\subset [0,\infty)$ that has finite linear measure such that for all $z$ satisfying $|z|=r\notin E_3$ we have
\begin{equation}\label{eqfi}
\left| \frac{f''(z)}{f'(z)}\right| \leq c r \left[T(2r,f)\right]^2
\end{equation}
where $c>0$ is a constant.\\
If $\rho(A)<\rho(B)$ then from Theorem [\ref{kwonthm}] and Theorem [\ref{wuthm}] we get that $\rho_2(f)= \max\{\ \rho(A),\rho(B) \}\ $.\\

 If $\rho(B)<\rho(A)=n, n\in \mathbb{N}$ then we can choose $\beta$ such that $\rho(B)<\beta<\rho(A)$. Now choose $\theta \in E^+\setminus E$ and $(r_m)\not \subset E_3$ such that equations (\ref{eqA1}), (\ref{kwoneq}) and (\ref{eqfi}) are satisfied for  $z_m=r_me^{(\iota \theta)}$. Using equation (\ref{sde}), (\ref{eqA1}), (\ref{kwoneq}) and (\ref{eqfi})  for  $z_m=r_me^{(\iota \theta)}$ we have
\begin{align*}
\qquad \exp{\{\ (1-\epsilon)\delta(P,\theta)r_m^n\}\ }&\leq |A(r_me^{\iota \theta})|\\
&\leq \left|\frac{f''(r_me^{\iota \theta})}{f'(r_me^{\iota \theta)}}\right|+|B(r_me^{\iota \theta})|\left| \frac{f(r_me^{\iota \theta})}{f'(r_me^{\iota \theta})} \right| \\
&\leq cr_m\left[T(2r_m,f)\right]^2+\exp{\left(r_m^{\beta}\right)}r_m
\end{align*} 
since $\beta < n$ this implies that 
\begin{equation}\label{eqthm1}
\limsup_{r\rightarrow \infty}\frac{\log \log T(r,f)}{\log{r}} \geq \rho(A)
\end{equation}
then from Theorem [\ref{wuthm}] and equation (\ref{eqthm1}) we have
$$ \rho_2(f)=\max\{\ \rho(A),\rho(B)\}\ $$

\item It has been proved that all non-trivial solutions $f(z)$ of the equation (\ref{sde}), with $A(z)$ and $B(z)$ satisfying the hypothesis of the theorem, are of infinite order. Also if $\rho(A)\neq \rho(B)$ then from above part [1],  
$$\rho_2(f)=\{\ \rho(A), \rho(B)\}\ $$
Now let $\rho(A)=\rho(B)=n, n\in\mathbb{N}$. Using Lemma [\ref{gunlem}], for $\epsilon>0$, there exists $E_3\subset [0,\infty)$ with finite linear measure such that for all $z$ satisfying $|z|=r\not \in E_3$ we have
\begin{equation}\label{eqhyp}
\left| \frac{f^{(k)}(z)}{f(z)}\right|\leq cr T(2r,f)^{2k}
\end{equation}
where $c>0$ is a constant and $k\in \mathbb{N}$. Also from Lemma [\ref{fablem}], for $\epsilon>0$, there exist $H\subset (1,\infty)$ satisfying $\overline{\log dens}(H)\geq 0$ such that for all $|z|=r\in H$ we have 
\begin{equation}\label{eqfab}
|B(z)|>\exp{\left(r^{n-\epsilon}\right)}
\end{equation}
Next choose $\theta \in E^-\setminus E$, $\delta(P,\theta)<0$ and $r_m \in H \setminus E_3$, from equations (\ref{sde}), (\ref{eq2le}), (\ref{eqhyp}) and (\ref{eqfab}) we have
\begin{align*}
\qquad \quad \exp{\left(r_m^{n-\epsilon}\right)}&< |B(r_me^{\iota \theta})|\leq \left| \frac{f''(r_me^{\iota \theta})}{f(r_me^{\iota \theta})}\right| +|A(r_me^{\iota \theta})| \left| \frac{f'(r_me^{\iota \theta})}{f(r_me^{\iota \theta})}\right| \\
& \leq cr_m T(2r_m,f)^4\\
&+ \exp{\{\ (1-\epsilon)\delta(P,\theta)r^{n}\}\ }cr_mT(2r_m,f)^2 \\
&\leq cr_m T(2r_m,f)^4(1+o(1)).
\end{align*}
Thus we conclude that 
\begin{equation}\label{hypor}
\limsup_{r\rightarrow \infty} \frac{\log \log T(r,f)}{\log r} \geq n.
\end{equation}
Using Theorem [\ref{wuthm}] and equation (\ref{hypor}) we get 
$$\rho_2(f)=\{\ \rho(A), \rho(B) \}\ . $$
\end{enumerate}
\end{proof}
\section{Proof of Theorem [\ref{lowthm}]}

\begin{proof}
If $\rho(A)=\infty$ then result follows from equation (\ref{sde}). Assume that $\rho(A)<\infty$.

 If $\rho(A)<\mu(B)$ then result follows from Theorem [\ref{finitgg}]. Let us suppose that $\mu(B)<\rho(A)$ and $f(z)$ be a non-trivial solution of the equation (\ref{sde}) with finite order. Then using part (i) of Lemma [\ref{gunlem}], for each $\epsilon>0,$ there exists a set $E_1 \subset[0,2\pi)$ that has linear measure zero, such that if $\psi_0 \in [0,2\pi)\setminus E_1, $ then there is a constant $R_0=R_0(\psi_0)>0$ and
\begin{equation} \label{guneq}
\left|\frac{f^{(k)}(z)}{f(z)}\right| \leq |z|^{2\rho(f)}, \quad k=1,2
\end{equation}
 for all $z$ satisfying $\arg z =\psi_0$ and $|z| \geq R_0$.
Since $\lambda(A)<\rho(A)$ therefore $A(z)=v(z)e^{P(z)}$, where $P(z)$ is a non-constant polynomial of degree $n$ and $v(z)$ is an entire function such that $\rho(v)=\lambda(A)<\rho(A)$. Then using Lemma [\ref{implem}], there exists $E\subset [0,2\pi)$ with linear measure zero such that for $\theta \in E^-\setminus (E\cup E_1)$ there exists $R_1>1$ such that 
\begin{equation}\label{eqA}
 |A(re^{\iota \theta})|\leq \exp \left( (1-\epsilon) \delta(P,\theta)r^n \right) 
\end{equation} 
 for $r>R_1.$\\
 We have following three cases on lower order of $B(z)$:\\

{\bf{Case 1.}} when $0<\mu(B)<\frac{1}{2}$ then from Lemma [\ref{lowordB}],  there exists $(r_n)\rightarrow \infty$ such that 
\begin{equation}\label{eqB}
|B(re^{\iota \theta})|>\exp{\left(r^{\mu(B)-\epsilon} \right)}
\end{equation}
for all $\theta \in [0,2\pi)$ and $r>R_3,$ $r\in (r_n)$.\\
Using equation (\ref{sde}), (\ref{guneq}), (\ref{eqA}) and (\ref{eqB}) we have
\begin{align*}
\exp{\left(r^{\mu(B)-\epsilon}\right)}&<|B(z)|\leq \frac{|f''(z)|}{|f(z)|}+|A(z)|\frac{|f'(z)|}{|f(z)|} \\
&\leq r^{2\rho(f)}\{\ 1+\exp \left( (1-\epsilon) \delta(P,\theta)r^n \right)\}\ \\
&= r^{2\rho(f)} \{\ 1+o(1) \}\
\end{align*}
for all $\theta \in E^-\setminus (E \cup E_1)$ and $r>R, r\in(r_n)$. This will conduct a contradication for sufficiently large $r$.
\\
Thus all non-trivial solutions are of infinite order in this case.\\

{\bf{Case 2.}} Now if $\mu(B)\geq \frac{1}{2}$ then by Lemma [\ref{jlole}] we have that there exists a sector $\Omega(\alpha,\beta)$, $0\leq \alpha <\beta \leq 2\pi$, $\beta-\alpha\geq \frac{\pi}{\mu(B)}$ such that 
\begin{equation}\label{eqB1}
\limsup_{r\rightarrow \infty} \frac{\log \log |B(re^{\iota \theta}|}{\log r}\geq \mu(B)
\end{equation}
for all $\theta \in \Omega(\alpha, \beta)$.\\
Since $\mu(B)<\rho(A)$ therefore there exists $\Omega(\alpha',\beta') \subset \Omega(\alpha,\beta)$ such that for all $\phi \in \Omega(\alpha', \beta')$ we have 
\begin{equation}\label{eqA2}
|A(re^{\iota \phi}|\leq \exp{\left( (1-\epsilon)\delta(P,\theta)r^n\right)}
\end{equation}
for all $r>R.$ From equation (\ref{eqB1}) we get
\begin{equation}\label{eqB2}
\exp{\left( r^{\mu(B)-\epsilon}\right)}\leq |B(re^{\iota \phi})|
\end{equation}
for $\phi \in \Omega(\alpha',\beta')$ and $r>R.$ As done in above case, using equation (\ref{sde}), (\ref{guneq}), (\ref{eqA2}) and (\ref{eqB2}) we get contradiction for sufficiently large $r$.\\

{\bf{Case 3.}} If $\mu(B)=0$ then from Lemma [\ref{loword}] for $\alpha \in (0,1)$, there exists a set $E_4\subset [0,\infty)$ with $\overline{\log dens}(E_4)=1$ such that 
$$m(r)>M(r)\cos{\pi \alpha}$$
 where $m(r)=\inf_{|z|=r}\log |B(z)|$ and $M(r)=\sup_{|z|=r} \log|B(z)|$. Then 
\begin{equation}\label{eqB3}
\log{|B(re^{\iota \theta})|}>\log{M(r,B)}\frac{1}{\sqrt{2}}
\end{equation}
for all $\theta \in [0,2\pi)$ and $r\in E_4.$ Now using equation (\ref{sde}), (\ref{guneq}), (\ref{eqA2}) and (\ref{eqB3}) we get
\begin{align*}
M(r,B)^{\frac{1}{\sqrt{2}}}<|B(re^{\iota \theta})|\leq r^{2\rho(f)}\{\ 1+\exp{(1-\epsilon)\delta(P,\theta)r^n} \}\
\end{align*}
for $\theta \not \in E\cup E_1$, $\delta(P,\theta)<0$ and $r>R$, $r\in E_4$. This implies that
 $$\liminf_{r\rightarrow \infty} \frac{\log M(r,B)}{\log r}<\infty$$
which is not so as $B(z)$ is an transcendental entire function. Thus non-trivial solution $f$ with finite order of the equation (\ref{sde}) can not exist in this case also.
Therefore all non-trivial solutions of the equation (\ref{sde}) are of infinite order.
\end{proof}
\section{Proof of Theorem [\ref{lowhyp}]}
\begin{proof}
We know that under the hypothesis of the theorem, all non-trivial solutions $f(z)$ of the equation (\ref{sde}) are of infinite order. It follows from Lemma [\ref{gunlem}] that for $\epsilon>0$, there exists a set $E_3\subset [0,\infty)$ with finte linear measure such that
\begin{equation}\label{eqf1}
\left|\frac{f''(z)}{f'(z)}\right|\leq cr[T(2r,f)]^2
\end{equation}
for all z satisfying $|z|=r\notin E_3$ when $c>0$ is a constant.\\

If $\rho(A)<\mu(B)$ then from Theorem [\ref{kwonthm}] and Lemma [\ref{ourlem}] we get that $\rho_2(f)=\max\{\ \rho(A),\mu(B)\}\ $, for all non-trivial solutions $f(z)$ of the equation (\ref{sde}).\\

If $\mu(B)<\rho(A)$. It is easy to choose $\eta$ such that $\mu(B)<\eta <\rho(A)$. From Lemma [\ref{implem}], we have
\begin{equation}\label{entA}
\exp{\{\ (1-\epsilon)\delta(P,\theta)r^n \}\ }\leq |A(re^{\iota \theta}|
\end{equation}
for all $\theta\notin E$, $\delta(P,\theta)>0$ and for sufficiently large $r$.\\
Also 
\begin{equation}\label{entB}
|B(re^{\iota \theta})|\leq\exp{r^{\eta}}
\end{equation}
for sufficiently large $r$ and for all $\theta\in[0,2\pi)$.\\
Thus from equations (\ref{sde}), (\ref{kwoneq}), (\ref{eqf1}),  (\ref{entA}) and (\ref{entB}) we have
\begin{align*}
\exp{\{\ (1-\epsilon)\delta(P,\theta)r^n\}\ }&\leq |A(re^{\iota \theta}| \\
&\leq \left|\frac{f''(re^{\iota \theta})}{f'(re^{\iota \theta})}\right|+|B(re^{\iota \theta})|\left|\frac{f(re^{\iota \theta})}{f'(re^{\iota \theta})}\right| \\
&\leq cr[T(2r,f)]^2+\exp{\left(r^{\eta}\right)}r \\
& \leq dr\exp{\left(r^{\eta}\right)}[T(2r,f)]^2
\end{align*}
for all $\theta \notin E$, $\delta(P,\theta)>0$ and for sufficiently large $r$. Since $\eta<\rho(A)=n$ we have
\begin{equation}
\exp{\{\ (1-\epsilon)\delta(P,\theta)\}\ }\exp{\{\ (1-o(1))r^n\}\ }\leq dr[T(2r,f)]^2
\end{equation}
for sufficiently large $r$. Thus 
$$\limsup_{r\rightarrow\infty}\frac{\log{\log{T(r,f)}}}{\log{r}}\geq n.$$
Now using $ \rho_2(f)\geq\max\{\ \rho(A),\mu(B) \}\ $and Lemma [\ref{ourlem}] we get the desired result.
\end{proof}
\section{Proof of Theorem [\ref{bordi}]}
\begin{proof}
If we consider the coefficients $A(z)$ and $B(z)$ such that $\rho(A)\neq\rho(B)$ then result follows from Theorem [\ref{prethm}]. Therefore, it is sufficient to consider $\rho(A)=\rho(B)=n$ for some $n\in \mathbb{N}$. Suppose there exists a non-trivial solution $f(z)$ of the equation (\ref{sde}) of finite order. Then using part (ii) of Lemma [\ref{gunlem}], for each $\epsilon>0,$ there exists a set $E_2 \subset(1,\infty)$ that has finite logarithmic measure, such that
\begin{equation} \label{gguneq}
\left|\frac{f^{(k)}(z)}{f(z)}\right| \leq |z|^{2\rho(f)}, \quad k=1,2
\end{equation}
 for all $z$ satisfying $|z|\not \in E_2\cup [0,1]$.

 Since $B(z)$ is extremal to Yang's inequality therefore there exists sectors $\Omega_i(\Phi_i,\Phi_{i+1})$, $i=1,2,3,\ldots, q$ such that in alternative sectors either equation (\ref{defic}) or equation (\ref{expinc}) holds for the function $B(z)$. Let $\Omega_1(\Phi_1,\Phi_2)$, $\Omega_3(\Phi_3,\Phi_4)$, $\ldots, \Omega_{2q-1}(\Phi_{2q-1},\Phi_{2q})$ being the sectors such that 
\begin{equation}\label{eqBde}
\log{\frac{1}{|B(z)-b_i|}}>C  T(|z|,B)
\end{equation} 
holds for all $z\in \Omega_i(\Phi_i+\epsilon,\Phi_{i+1}-\epsilon, r,\infty)$, $C=C(\Phi_i,\Phi_{i+1}, \epsilon, \delta(b_i,B))$, where $\delta(b_i,B)$ is used for deficiency function of $B(z)$, is a positive constant depending on $\Phi_i, \Phi_{i+1}, \epsilon$ and $\delta(b_i,B)$, where $i=1,3,\ldots, 2q-1$.

Also, let $\Omega_2(\Phi_2,\Phi_3),$ $\Omega_4(\Phi_4,\Phi_6)$, $\ldots, \Omega_{2q}(\Phi_{2q},\Phi_{2q+1})$ are the sectors for which there exists $re^{\iota \theta_{2i}}\in \Omega_{2i}(\Phi_{2i},\Phi_{2i+1})$ such that 
\begin{equation}\label{expincre}
\limsup_{r\rightarrow \infty}\frac{\log{\log{|B(re^{\iota \theta_{2i}})|}}}{\log{r}}=n
\end{equation}
holds and $\Phi_{2i+1}-\Phi_{2i}=\frac{\pi}{n}$ where $i=1,2,\ldots, q$.
  
Now we have the following cases to be discussed:\\

{\bf{Case 1.}} let us suppose that there is a Borel direction $\Phi$ of $B(z)$ such that $\theta_i<\Phi<\phi_{i+1}$ for any $i=1,2,3, \ldots,n$ then we can easily choose $\psi_1$ and $\psi_2$ such that $\theta_i<\psi_1<\Phi<\psi_2<\phi_{i+1}$. It is evident from Lemma [\ref{expincr}] that without loss of generality we can choose $\psi_2$ and we get
\begin{equation}\label{exp}
\limsup_{r\rightarrow \infty} \frac{\log{\log{|B(re^{\iota \psi_2})|}}}{\log{r}}=n.
\end{equation}
Thus, for $r\notin E_2\cup[0,1]$ from equation (\ref{sde}), (\ref{eq2le}), (\ref{gguneq}) and (\ref{exp}) we have 
\begin{align*}
\exp{\{\ r^{n-\epsilon}\}\ }&\leq |B(re^{\iota \psi_2})|\leq \left| \frac{f''(re^{\iota \psi_2})}{f(re^{\iota \psi_2})}\right| +|A(re^{\iota \psi_2})|\left|\frac{f'(re^{\iota \psi_2})}{f(re^{\iota \psi_2})}\right| \\
& \leq r^{2\rho(f)}\{\ 1+\exp{\{\ 1-\epsilon)\delta(P,\psi_2)r^n\}\ }\}\
\end{align*}
which is a contradiction for sufficiently large $r$.\\

{\bf{Case 2.}} Now suppose that there is no Borel direction of $B(z)$ contained in $(\theta_i,\phi_{i+1})$ for any $i=1,2,\ldots, n$. In this case $(\theta_i,\phi_{i+1})$ will be contained inside $\Omega_{2j-1}(\Phi_{2j-1},\Phi_{2j})$, for any $j=1,2,3,\ldots, q$.

Therefore for $r\notin E_2\cup[0,1]$ and $\theta\in E^+\cap \Omega_{2j-1}(\Phi_{2j-1},\Phi_{2j})$, from equations (\ref{sde}), (\ref{eqA1}), (\ref{kwoneq}), (\ref{gguneq}) and (\ref{eqBde}) we get
\begin{align*}
\qquad \exp{\{\ (1-\epsilon)\delta(P,\theta)r^n \}\ }&\leq |A(re^{\iota \theta})| \\
&\leq \left|\frac{f''(re^{\iota \theta})}{f'(re^{\iota \theta})}\right|+|B(re^{\iota \theta})|\left|\frac{f(re^{\iota \theta})}{f'(re^{\iota \theta})}\right| \\
	&\leq r^{2\rho(f)}+r\{\ \exp{\{\ -CT(r,B)\}\ }+|b_{2j-1}|\}\ \\
&\leq r^{2\rho(f)}(1+|b_{2j-1}|+o(1))
\end{align*} 
which provides a contradition for sufficiently large $r$.

Thus all non-trivial solutions $f$ of the equation (\ref{sde}) are of infinite order.
\end{proof}
\section{ Proof of the Theorem [\ref{bordihy}]}
\begin{proof}
Since all non-trivial solution of the equation (\ref{sde}) under hypothesis are of infinite order. Therefore it follows from part (iii) of Lemma [\ref{gunlem}] that for $\epsilon>0$, there exists a set $E_3\subset [0,\infty)$ having finite linear measure such that for all $z$ satisfying $|z|=r\notin E_3$ we have
\begin{equation}\label{eqf}
\left| \frac{f^{(k)}(z)}{f^{(j)}(z)}\right| \leq c r \left[T(2r,f)\right]^{2(k-j)}
\end{equation}
where $c>0$ is a constant and $k\in \mathbb{N}$.\\
If $\rho(A)\neq\rho(B)$ then from part (\ref{parti}) of Theorem [\ref{hypthm}] we have 
$$\rho_2(f)= \max\{\ \rho(A),\rho(B)\}\ . $$
We consider $\rho(A)=\rho(B) =n$ where $n\in \mathbb{N}$. Now we have two cases to deal with

{\bf{Case 1}}. let us suppose that there is a Borel direction $\Phi$ of $B(z)$ such that $\theta_i<\Phi<\phi_{i+1}$ for any $i=1,2,3, \ldots,n$.Thus, for $r\notin E_3$ from equation (\ref{sde}), (\ref{eq2le}), (\ref{exp}) and  (\ref{eqf})we have 
\begin{align*}
\exp{\{\ r^{n-\epsilon}\}\ }&\leq |B(re^{\iota \psi_2})|\leq \left| \frac{f''(re^{\iota \psi_2})}{f(re^{\iota \psi_2})}\right| +|A(re^{\iota \psi_2})|\left|\frac{f'(re^{\iota \psi_2})}{f(re^{\iota \psi_2})}\right| \\
& \leq  c r \left[T(2r,f)\right]^4+ \exp{\{\ (1-\epsilon)\delta(P,\psi_2)r^n\}\ }c r \left[T(2r,f)\right]^2\\
&\leq  c r \left[T(2r,f)\right]^4(1+o(1))
\end{align*} 
for sufficiently large $r$. Thus 
\begin{equation}\label{bordieq}
\limsup_{r\rightarrow \infty}\frac{\log{\log{T(r,f)}}}{\log{r}}\geq n
\end{equation}
Then it can be seen easily from equation (\ref{bordieq}) and  Theorem [\ref{wuthm}] that
 $$\rho_2(f)=n$$
for all non-trivial solutions $f$ of the equation (\ref{sde}).\\

{\bf{Case 2.}} Now suppose that there is no Borel direction of $B(z)$ contained in $(\theta_i,\phi_{i+1})$ for any $i=1,2,\ldots, n$. Therefore for $r\notin E_3$ and $\theta\in E^+\cap \Omega_{2j-1}(\Phi_{2j-1},\Phi_{2j})$, from equations (\ref{sde}), (\ref{eqA1}), (\ref{kwoneq}), (\ref{eqBde}) and  (\ref{eqf}) we get
\begin{align*}
\qquad \exp{\{\ (1-\epsilon)\delta(P,\theta)r^n\}\ }&\leq |A(re^{\iota \theta})| \\
&\leq \left|\frac{f''(re^{\iota \theta})}{f'(re^{\iota \theta})}\right|+|B(re^{\iota \theta})|\left|\frac{f(re^{\iota \theta})}{f'(re^{\iota \theta})}\right| \\
&\leq c r \left[T(2r,f)\right]^2+r \{\ \exp{\{\ -CT(r,B)\}\ }\\
&+|b_{2j-1}|\}\  \\
&\leq dr \left[T(2r,f)\right]^2(1+|b_{2j-1}|+o(1))
\end{align*}
for sufficiently large $r$ and for $d>0$ is a constant. Thus
\begin{equation}\label{bordieq1}
\limsup_{r\rightarrow \infty}\frac{\log{\log{T(r,f)}}}{\log{r}}\geq n
\end{equation}
It follows from equation (\ref{bordieq1}) and Theorem [\ref{wuthm}] that
$$\rho_2(f)=n$$
for all non-trivial solution $f$ of the equation (\ref{sde}).

\end{proof}
\section{Proof of the Theorem [\ref{denth1}]}
\begin{proof}
When $\rho(A)\neq \rho(B) $ then the result holds true from Theorem [prethm]. Assume that $\rho(A)=\rho(B)=n, n\in \mathbb{N}$ and there exists a non-trivial solution $f$ of the equation (\ref{sde}) of finite order. Then by part (i) of Lemma [\ref{gunlem}],  for given $\epsilon>0$ equation (\ref{guneq})holds true for $z$ satisfying $|z|>R$ and $\arg{z}\in E_1$. We will discuss following cases:\\
{\bf{Case 1.}} suppose that the ray $\arg{z}=\Phi$ is a Borel direction of $B(z)$ where $\theta_i<\Phi<\phi_{i+1}$ for some $i=1,2,\ldots, n$ then the conclusion holds in similar manner as in Case 1. of Theorem [\ref{bordi}].
\\
{\bf{Case 2.}} Suppose that $\arg{z}=\theta$ is not a Borel direction of $B(z)$ for any $\theta\in(\theta_i,\phi_{i+1})$ for all $i=1,2,\ldots, n$ then choose $\arg{z}=\theta \in (\theta_i,\phi_{i+1})$ for some $i=1,2,\ldots, n$. Then by Lemma [\ref{denlem}] there exists $\sigma\in(0,\frac{\pi}{4})$ such that 
\begin{equation*}
\lim_{|z|\rightarrow \infty, z \in (\Omega(\theta-\sigma,\theta+\sigma)\setminus E_5)} \frac{\log \log |B(z)|}{\log{|z|}}
=\rho(B)
\end{equation*}
Thus  \begin{equation}\label{deneq}
\exp{\{\ r^{\rho(B)-\epsilon}\}\ }\leq |B(z)|
\end{equation}
for all $z$ satisfying $|z|=r\rightarrow \infty$ and $z\in (\Omega(\theta-\sigma,\theta+\sigma)\setminus E_5)\cap (\theta_i,\phi_{i+1})\setminus S$, where $S=\{\ z\in \mathbb{C}: \arg(z)\in E_1\}\ $. Now from equations (\ref{sde}), (\ref{eq2le}), (\ref{guneq}) and (\ref{deneq}) we get a contradiction for sufficiently large $r$.\\

Thus all non-trivial solutions of the equation (\ref{sde}) are of infinite order.
\end{proof}
\section{Proof of the Theorem [\ref{denth2}]}
\begin{proof}
We need to consider that $\rho(A)=\rho(B)=n, n\in\mathbb{N}$. We again discuss two cases:
\\
{\bf{Case 1.}} suppose that the ray $\arg{z}=\Phi$ is a Borel direction of $B(z)$ where $\theta_i<\Phi<\phi_{i+1}$ for some $i=1,2,\ldots, n$ then the conclusion holds in similar manner as in Case 1. of Theorem [\ref{bordi}].\\
{\bf{Case 2.}} Suppose that $\arg{z}=\theta$ is not a Borel direction of $B(z)$ for any $\theta\in(\theta_i,\phi_{i+1})$ for all $i=1,2,\ldots, n$ then choose $\arg{z}=\theta \in (\theta_i,\phi_{i+1})$ for some $i=1,2,\ldots, n$. Then from equations (\ref{sde}), (\ref{ggguneq}), (\ref{eq2le}) and (\ref{deneq}) we have 
\begin{equation}\label{deneq1}
\limsup_{r\rightarrow \infty}\frac{\log \log T(r,f)}{\log{r}}\geq n
\end{equation}
From Theorem [wuthm] and equation (\ref{deneq1}) we get the desired result.
\end{proof}
\section{Extention to Higher Order}
This section involves linear differential equation of the form:
\begin{equation}\label{sde1}
f^{(k)}+A_{k-1}(z)f^{(k-1)}+A_{k-2}f^{(k-2)}+\ldots+A_{0}f=0
\end{equation}
where $A_{k-1}, A_{k-2},\ldots, A_0$ are entire functions, therefore all solutions of the equation (\ref{sde1}) are entire function \cite{lainebook}. Also, all solutions of the equation (\ref{sde1}) are of finite order if and only if all the coefficients $A_{k-1}, A_{k-2},\ldots, A_0$ are polynomials. Thus if any of the coefficients $A_{k-1}, A_{k-2},\ldots, A_0$ is a transcendental entire functions then there will exists a non-trivial solution of infinite order. In this section we will discuss the conditions on coefficients $A_{k-1}, A_{k-2}, \ldots, A_0$ so that all non-trivial solutions of the equation (\ref{sde1}) are of infinite order. 

For this purpose we will extend our previous results in the following manner:
\begin{thm}\label{exth1}
Suppose that there exists $A_i(z)$ such that $\lambda(A_i)<\rho(A_i)$ and $A_0(z)$ be a transcendental entire function satisfying $\mu(A_0)\neq \rho(A_i)$ and $\rho(A_j)<\mu(A_0)$ for all $j=1,2,\ldots, k-1$ and $j\neq i$. Then all non-trivial solutions of the equation $(\ref{sde1})$ are of infinite order. Moreover, for these solutions $f$ we have 
$$\rho_2(f)\geq \mu(A_0) .$$
\end{thm}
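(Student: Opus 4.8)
The plan is to imitate the second–order arguments of Theorem \ref{lowthm} and Theorem \ref{lowhyp}, treating $A_i$ as the \emph{only} coefficient that need not be dominated by $A_0$ and absorbing every remaining middle coefficient into an error term. Since $\lambda(A_i)<\rho(A_i)$, I write $A_i(z)=v(z)e^{P(z)}$ with $\deg P=n=\rho(A_i)$ and $\rho(v)=\lambda(A_i)<n$, and I split the argument according to $\mu(A_0)=:\mu\neq n$ into subcase A ($\rho(A_i)<\mu$) and subcase B ($\rho(A_i)=n>\mu$).

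\textbf{First part (infinite order).} Suppose for contradiction that (\ref{sde1}) has a non-trivial finite–order solution $f$. Isolating $A_0$ gives
\[
|A_0(z)|\le \left|\frac{f^{(k)}(z)}{f(z)}\right|+\sum_{j=1}^{k-1}|A_j(z)|\left|\frac{f^{(j)}(z)}{f(z)}\right|,
\]
and by part (i) of Lemma \ref{gunlem} there is an angular null set $E_1$ with $|f^{(m)}(z)/f(z)|\le |z|^{k(\rho(f)+1)}$ for all $m\le k$ along each ray $\arg z=\theta\notin E_1$ and $|z|$ large. For the indices $j\neq 0,i$ the hypothesis $\rho(A_j)<\mu$ yields $|A_j(z)|\le \exp(r^{\mu-\delta})$ for a fixed $\delta>0$, uniformly in $\theta$. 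In subcase A the same bound holds for $A_i$ on every ray, while in subcase B Lemma \ref{implem} gives $|A_i(z)|\le \exp((1-\epsilon)\delta(P,\theta)r^n)=o(1)$ on the sectors $E^-$ (after removing a further null set $E$). In either situation every term on the right is controlled, and I obtain a contradiction exactly as in Theorem \ref{lowthm} by feeding in a lower bound for $A_0$ coming from its lower order: Lemma \ref{lowordB} when $0<\mu<\tfrac12$, Lemma \ref{jlole} when $\mu\ge\tfrac12$, and Lemma \ref{loword} when $\mu=0$. (If $\mu=0$ the domination hypothesis forces all $A_j$, $j\neq 0,i$, to vanish, so no extra terms occur.)

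\textbf{Second part ($\rho_2(f)\ge\mu(A_0)$).} Now $f$ is known to be of infinite order, so I replace the finite–order estimate by part (iii) of Lemma \ref{gunlem}, obtaining a set $E_3\subset[0,\infty)$ of finite linear measure with $|f^{(m)}(z)/f(z)|\le c\,r\,[T(2r,f)]^{2m}$ for $|z|=r\notin E_3$. Running the same case analysis along the identical rays, but now choosing a radial sequence $r_m\to\infty$ on which the $A_0$–lower bound holds while $r_m\notin E_3$ (possible since the $A_0$–large set has positive upper logarithmic density, or is unbounded on a fixed ray, whereas $E_3$ has finite logarithmic measure), I reach
\[
\exp\!\big(r_m^{\,\mu-\epsilon}\big)\le |A_0(z)|\le c\,r_m\,[T(2r_m,f)]^{2k}\exp\!\big(r_m^{\,\mu-\delta}\big)
\]
with $\epsilon<\delta$. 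Because $r_m^{\,\mu-\epsilon}$ dominates $r_m^{\,\mu-\delta}$, this forces $\limsup_{r\to\infty}\frac{\log\log T(r,f)}{\log r}\ge\mu-\epsilon$, and letting $\epsilon\to0$ yields $\rho_2(f)\ge\mu(A_0)$. Since only a lower bound is claimed, no companion upper estimate (of the Theorem \ref{wuthm} type) is needed.

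\textbf{Main obstacle.} The logarithmic–derivative bounds and the domination of the middle coefficients are routine. The delicate point is the one that already drives the second–order result: in subcase B I must guarantee that a direction realising the lower order of $A_0$ lies inside a sector $E^-$ on which $e^{P}$ decays, and simultaneously avoids the exceptional sets $E\cup E_1$ (first part) or lies off $E_3$ (second part) while retaining the $A_0$–lower bound. The strict inequality $\mu(A_0)<\rho(A_i)$ is precisely what makes the sector of width $\ge\pi/\mu$ furnished by Lemma \ref{jlole} strictly wider than the $E^-$–sector of width $\pi/n$, hence forces a genuine overlap; this is where $\mu(A_0)\neq\rho(A_i)$ is essential, and the passage to higher order adds only the bookkeeping of summing the dominated terms.
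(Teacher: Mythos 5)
Your proposal follows essentially the same route as the paper's own proof: assume a finite-order solution, bound the logarithmic derivatives via Lemma \ref{gunlem}, dominate the middle coefficients by $\exp(r^{\eta})$ with $\eta<\mu(A_0)$, use Lemma \ref{implem} to make $A_i$ decay on the $E^-$ sectors, and obtain the lower bound for $|A_0|$ from Lemma \ref{lowordB}, Lemma \ref{jlole} or Lemma \ref{loword} according to whether $\mu(A_0)$ lies in $(0,\frac{1}{2})$, in $[\frac{1}{2},\infty)$, or equals $0$; then rerun the same inequalities with characteristic-function estimates in place of the finite-order ones to get $\rho_2(f)\geq\mu(A_0)$. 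The only structural deviation is the subcase $\rho(A_i)<\mu(A_0)$: the paper disposes of it by a Nevanlinna characteristic comparison, namely $T(r,A_0)\leq k(\rho(f)+\epsilon)\log{r}+(k-1)T(r,A_m)+O(1)$, which contradicts $\max_{j\neq 0}\rho(A_j)<\mu(A_0)$, whereas you reuse the pointwise minimum-modulus machinery; both work, the paper's version being marginally lighter since it needs no angular analysis there. Your parenthetical observation that $\mu(A_0)=0$ forces the middle coefficients to be absent is correct, and in fact sharper than the paper, which carries vacuous terms through that case.

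The one step whose justification does not hold as written is the radial avoidance in the hyper-order part when $\mu(A_0)\geq\frac{1}{2}$. Lemma \ref{jlole} gives, for each fixed ray in its sector, only a $\limsup$; the set of radii $r$ with $|A_0(re^{\iota\theta})|\geq\exp(r^{\mu-\epsilon})$ may therefore be a sparse unbounded sequence, and ``unbounded'' does not imply that it escapes a set $E_3$ of finite linear measure: such a set can contain any prescribed sequence $r_m\to\infty$, for instance $E_3\supset\bigcup_m[r_m,r_m+2^{-m}]$. Your density justification is valid only in the cases resting on Lemma \ref{loword} or Lemma \ref{lowordB}, whose underlying set of radii has positive upper logarithmic density and hence infinite measure. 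The clean repair, consistent with what you already do in the first part, is to invoke estimate (\ref{ggguneq}) of part (i) of Lemma \ref{gunlem} along a fixed ray $\theta$ chosen in the overlap sector and off the null angular sets $E\cup E_1$: that estimate holds for all sufficiently large $r$ on the ray, so there is no radial exceptional set to avoid, and the same chain of inequalities gives $\limsup_{r\to\infty}\log\log T(r,f)/\log r\geq\mu(A_0)-\epsilon$. You are in good company here: the paper's own proof of this step cites part (ii) of Lemma \ref{gunlem} (exceptional set of finite logarithmic measure) together with ``Lemma \ref{lowordB} (or Lemma \ref{jlole})'' and never addresses the point either, so your attempt is at parity with, not below, the published argument.
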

\begin{cor}
The conclusion of the above theorem also holds if $\mu(A_0)\neq \mu(A_i)$ and $\rho(A_j)<\mu(A_0)$ for all $j=1,2,\ldots, k-1$ and $j\neq i$.
\end{cor}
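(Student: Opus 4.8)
The plan is to transport the two‑coefficient argument of Theorem \ref{lowthm}, together with its hyper‑order companion Theorem \ref{lowhyp}, to the higher‑order equation (\ref{sde1}), treating $A_0$ as the distinguished coefficient that one always isolates and $A_i$ as the coefficient carrying the exponential structure. Since $\lambda(A_i)<\rho(A_i)$, whenever $\rho(A_i)<\infty$ the Hadamard factorization gives $A_i=v\,e^{P}$ with $\deg P=n=\rho(A_i)$ and $\rho(v)=\lambda(A_i)<n$, so that Lemma \ref{implem} is available; the degenerate possibility $\rho(A_i)=\infty$ is disposed of directly from the equation exactly as in the proof of Theorem \ref{lowthm}, and I assume $\rho(A_i)<\infty$ from now on. Throughout I would rewrite the equation in the isolated form
$$A_0 = -\frac{f^{(k)}}{f} - \sum_{j=1}^{k-1} A_j\,\frac{f^{(j)}}{f},$$
so that the only large coefficient on the right is $A_i$, which I will either bound trivially or suppress on a sector of $E^-$. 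The hypothesis $\mu(A_0)\neq\rho(A_i)$ splits the analysis into the two sub‑cases $\rho(A_i)<\mu(A_0)$ and $\mu(A_0)<\rho(A_i)=n$.

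For the infinite‑order assertion, suppose toward a contradiction that $f\not\equiv0$ is a finite‑order solution. By part (i) of Lemma \ref{gunlem} I fix a direction $\arg z=\psi_0$ off the zero‑measure exceptional set and use $|f^{(j)}/f|\le |z|^{k\rho(f)}$ for $1\le j\le k$ and large $|z|$. In the sub‑case $\rho(A_i)<\mu(A_0)$ every companion coefficient, including $A_i$, has order strictly below $\mu(A_0)$, so the trivial bound $|A_j|\le \exp(r^{\rho(A_j)+\epsilon})$ makes the whole right‑hand side at most $|z|^{k\rho(f)}\exp(r^{\gamma})$ with $\gamma<\mu(A_0)$; choosing a direction or a radius sequence on which the appropriate lower‑order lemma forces $|A_0|>\exp(r^{\mu(A_0)-\epsilon})$ — Lemma \ref{lowordB} for $\mu(A_0)\in(0,\tfrac12)$, Lemma \ref{jlole} for $\mu(A_0)\ge\tfrac12$, and the transcendence argument via Lemma \ref{loword} for $\mu(A_0)=0$ — the two estimates collide for large $r$. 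In the sub‑case $\mu(A_0)<n$ I instead work inside a sector contained in $E^-$, where Lemma \ref{implem} gives $|A_i|\le \exp\bigl((1-\epsilon)\delta(P,\theta)r^{n}\bigr)\le 1$ because $\delta(P,\theta)<0$, while the remaining $A_j$ with $j\neq i$ still have order below $\mu(A_0)$; the same collision again yields a contradiction, so every non‑trivial solution has infinite order.

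For the hyper‑order bound I run the identical dichotomy, now with $f$ of infinite order, replacing part (i) of Lemma \ref{gunlem} by part (ii), which gives $|f^{(j)}/f|\le c\,(T(2r,f))^{2k}$ off a set of finite logarithmic measure. Repeating the two sub‑case estimates produces, in each case,
$$\exp\!\left(r^{\mu(A_0)-\epsilon}\right) < |A_0| \le c\,(T(2r,f))^{2k}\exp(r^{\gamma})\,(1+o(1)), \qquad \gamma<\mu(A_0),$$
where in the second sub‑case the factor $\exp(r^{\gamma})$ collects the coefficients $A_j$, $j\neq i$, and $A_i$ contributes at most $1$ on $E^-$. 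Taking $\log\log$ gives $\limsup_{r\to\infty}\frac{\log\log T(r,f)}{\log r}\ge \mu(A_0)-\epsilon$, and letting $\epsilon\to0$ yields $\rho_2(f)\ge \mu(A_0)$.

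The step I expect to be the main obstacle is the geometric matching in the sub‑case $\mu(A_0)<n$: I must guarantee that the sector on which the lower‑order lemma makes $A_0$ large genuinely meets a sub‑sector of $E^-$ on which $A_i$ is exponentially small, while simultaneously avoiding the zero‑measure exceptional directions of Lemma \ref{gunlem} and of Lemma \ref{implem}. This succeeds precisely because $\mu(A_0)<n$ forces the $A_0$‑sector, of opening at least $\pi/\mu(A_0)$, to be wider than the $\pi/n$‑periodic sign sectors of $\delta(P,\cdot)$, so it must contain a full sub‑sector of $E^-$ — the very mechanism already used in Case 2 of Theorem \ref{lowthm}. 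The remaining work is bookkeeping: checking that each $A_j$ with $j\neq i$ and the isolated term $A_i f^{(i)}/f$ on $E^-$ are truly negligible against $\exp(r^{\mu(A_0)-\epsilon})$, and noting that when $\mu(A_0)=0$ the hypothesis $\rho(A_j)<\mu(A_0)$ forces the companion coefficients to vanish, so that range reduces to the transcendence argument of Theorem \ref{lowthm}.
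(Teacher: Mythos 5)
Your proposal never engages with the hypothesis that the corollary actually asserts. The corollary replaces the theorem's condition $\mu(A_0)\neq\rho(A_i)$ by $\mu(A_0)\neq\mu(A_i)$ (keeping $\lambda(A_i)<\rho(A_i)$ and $\rho(A_j)<\mu(A_0)$ for $j\neq i$), yet your case split --- ``the hypothesis $\mu(A_0)\neq\rho(A_i)$ splits the analysis into the two sub-cases $\rho(A_i)<\mu(A_0)$ and $\mu(A_0)<\rho(A_i)=n$'' --- is taken verbatim from the \emph{theorem's} hypothesis, and $\mu(A_i)$ never appears anywhere in your argument. Under the corollary's hypothesis alone, the possibility $\mu(A_0)=\rho(A_i)$ with $\mu(A_0)\neq\mu(A_i)$ is not covered by your dichotomy, so what you have written is a re-derivation of Theorem [\ref{exth1}] (which the paper already proves, and which your estimates essentially reproduce), not a proof of the corollary.

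The missing idea --- which is the entire content of the corollary, and the reason the paper can state it without a separate proof --- is that $\lambda(A_i)<\rho(A_i)$ forces $A_i$ to have \emph{regular growth}, i.e.\ $\mu(A_i)=\rho(A_i)$. Indeed, from the factorization you already invoke, $A_i=v\,e^{P}$ with $\deg P=n=\rho(A_i)$ and $\rho(v)=\lambda(A_i)<n$; since $T(r,e^{P})\sim \frac{|a_n|}{\pi}r^{n}$, the estimate $T(r,A_i)\geq T(r,e^{P})-T(r,v)+O(1)$ gives $\mu(A_i)\geq n$, while trivially $\mu(A_i)\leq\rho(A_i)=n$. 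Hence $\mu(A_0)\neq\mu(A_i)$ is \emph{equivalent} to $\mu(A_0)\neq\rho(A_i)$, the troublesome third case is vacuous, and the corollary follows from Theorem [\ref{exth1}] with no further estimation (exactly as Corollary [\ref{corlo}] follows from Theorem [\ref{lowthm}] in the second-order setting). Once this one observation is inserted at the start, the rest of your proposal becomes redundant; without it, the proposal does not establish the stated corollary.
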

\begin{cor}
The conclusion of the theorem also holds if $\mu(A_0)\neq \mu (A_i)$ and $\mu(A_j)<\mu(A_0)$ for all $j=1,2,\ldots, k-1$ and $j\neq i$.
\end{cor}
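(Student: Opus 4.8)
The plan is to adapt the proof of Theorem~\ref{lowthm} to the higher order equation (\ref{sde1}), treating $A_0$ as the dominant coefficient and $A_i$ as the one carrying the gap structure, while the remaining middle coefficients are absorbed into an error term of controlled growth. We may assume $\rho(A_i)<\infty$; the case $\rho(A_i)=\infty$ is disposed of directly as in the corresponding step of Theorem~\ref{lowthm}. Since $\lambda(A_i)<\rho(A_i)<\infty$, we write $A_i=v(z)e^{P(z)}$ with $P$ a non-constant polynomial of degree $n=\rho(A_i)$ and $\rho(v)=\lambda(A_i)<n$, exactly as in Lemma~\ref{implem}. Suppose toward a contradiction that $f\not\equiv0$ is a solution of finite order (note $f$ cannot be constant, since $A_0\not\equiv0$). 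By part~(i) of Lemma~\ref{gunlem} there is a set $E_1\subset[0,2\pi)$ of linear measure zero such that, on rays off $E_1$ and for $|z|$ large,
\begin{equation*}
\left|\frac{f^{(m)}(z)}{f(z)}\right|\le |z|^{c_0},\qquad m=1,2,\dots,k,
\end{equation*}
where $c_0$ depends on $\rho(f)$ and $k$. Solving (\ref{sde1}) for $A_0$ and dividing by $f$ gives
\begin{equation*}
|A_0(z)|\le \left|\frac{f^{(k)}}{f}\right|+\sum_{j=1}^{k-1}|A_j|\left|\frac{f^{(j)}}{f}\right|.
\end{equation*}

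Since $\mu(A_0)\neq\rho(A_i)$, two cases arise. If $\rho(A_i)<\mu(A_0)$, then every middle coefficient has order strictly below $\mu(A_0)$, so we fix $\beta$ with $\max_{1\le j\le k-1}\rho(A_j)<\beta<\mu(A_0)$ and obtain $|A_j(z)|\le \exp(r^{\beta})$ for all large $r$; hence on any ray off $E_1$ the bound above yields $|A_0(z)|\le c\,r^{c_0}\exp(r^{\beta})$. If instead $\mu(A_0)<\rho(A_i)$, we fix $\beta$ with $\max_{j\neq i}\rho(A_j)<\beta<\mu(A_0)$, so the coefficients $A_j$ with $j\neq i$ still obey $|A_j|\le\exp(r^{\beta})$, while for $A_i$ we invoke Lemma~\ref{implem}: on the sectors $E^-$ where $\delta(P,\theta)<0$ we have $|A_i(z)|\le\exp\big((1-\epsilon)\delta(P,\theta)r^{n}\big)\le1$ for large $r$. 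Restricting to a ray lying in an $E^-$ sector of $A_i$ and off $E_1\cup E$, we again reach $|A_0(z)|\le c\,r^{c_0}\exp(r^{\beta})$.

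In both cases the contradiction is produced by forcing $A_0$ to be large on the chosen ray, for which I would run the sub-case analysis of Theorem~\ref{lowthm} on $\mu(A_0)$. When $0<\mu(A_0)<\tfrac12$, Lemma~\ref{lowordB} supplies a sequence $r_m\to\infty$ with $|A_0(r_me^{\iota\theta})|>\exp(r_m^{\mu(A_0)-\epsilon})$ for every $\theta$, so a suitable ray is available in either case. When $\mu(A_0)=0$ (possible only in the second case, and then forcing $A_j\equiv0$ for $j\neq i$), Lemma~\ref{loword} gives $\log|A_0|>\tfrac1{\sqrt2}\log M(r,A_0)$ on a set of upper logarithmic density $1$, and the contradiction follows as in Case~3 of Theorem~\ref{lowthm} from $\lim_{r\to\infty}\log M(r,A_0)/\log r=\infty$. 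The main obstacle is the sub-case $\mu(A_0)\ge\tfrac12$ inside the regime $\mu(A_0)<\rho(A_i)$: Lemma~\ref{jlole} yields a sector $\Omega(\alpha_0,\beta_0)$ of opening $\beta_0-\alpha_0\ge\pi/\mu(A_0)$ on which $\limsup_{r\to\infty}\log\log|A_0(re^{\iota\theta})|/\log r\ge\mu(A_0)$, and one must verify that this sector meets an $E^-$ sector of $A_i$. This is precisely where $\mu(A_0)<\rho(A_i)=n$ enters, since then $\pi/\mu(A_0)>\pi/n$ exceeds the width $\pi/n$ of each $E^-$ sector, so $\Omega(\alpha_0,\beta_0)$ cannot lie in the closure of a single $E^+$ sector and therefore contains a subsector inside some $E^-$ sector; on a ray there, off the exceptional sets, $A_i$ is negligible while $|A_0|\ge\exp(r^{\mu(A_0)-\epsilon})$ along a sequence. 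In every sub-case the inequality $\exp(r^{\mu(A_0)-\epsilon})\le c\,r^{c_0}\exp(r^{\beta})$ with $\beta<\mu(A_0)$ is impossible for large $r$, whence $\rho(f)=\infty$.

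For the hyper-order estimate I would repeat the argument verbatim, replacing part~(i) of Lemma~\ref{gunlem} by its infinite-order form in parts (ii)/(iii), which gives $|f^{(m)}/f|\le c\,[T(2r,f)]^{2k}$ for $m=1,\dots,k$ off a set of finite measure. The same choice of ray and sequence then produces $\exp(r^{\mu(A_0)-\epsilon})\le c\,[T(2r,f)]^{2k}\exp(r^{\beta})$; taking logarithms twice and using $\beta<\mu(A_0)$ gives $\limsup_{r\to\infty}\log\log T(r,f)/\log r\ge\mu(A_0)-\epsilon$, and letting $\epsilon\to0$ yields $\rho_2(f)\ge\mu(A_0)$ (the bound being vacuous when $\mu(A_0)=0$). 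I expect the only delicate points beyond the sector-intersection step to be the uniform handling of the $k-2$ auxiliary coefficients $A_j$ with $j\neq i$, which is routine once $\beta$ is fixed strictly below $\mu(A_0)$ and above all their orders.
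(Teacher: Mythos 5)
Your write-up does not engage with the one hypothesis that makes this corollary different from the preceding one, and as a result it proves the wrong statement. This corollary weakens the assumption on the middle coefficients from $\rho(A_j)<\mu(A_0)$ to $\mu(A_j)<\mu(A_0)$, $j\neq i$; that weakening is its entire content. Yet at both places where you absorb the middle coefficients you invoke their \emph{orders}: you ``fix $\beta$ with $\max_{1\le j\le k-1}\rho(A_j)<\beta<\mu(A_0)$ and obtain $|A_j(z)|\le\exp(r^{\beta})$ for all large $r$,'' and again in the regime $\mu(A_0)<\rho(A_i)$ you take $\max_{j\neq i}\rho(A_j)<\beta<\mu(A_0)$, calling this step ``routine once $\beta$ is fixed \ldots above all their orders.'' Under the actual hypothesis no such $\beta$ exists: $\mu(A_j)<\mu(A_0)$ only yields, for each fixed $j$, some sequence of radii $r\to\infty$ (depending on $j$) along which $T(r,A_j)\le r^{\beta}$, never a bound valid for all large $r$. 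This is fatal to the structure of the argument, because the contradiction needs all coefficients $A_j$, $j\neq 0,i$, to be small \emph{simultaneously}, and moreover at precisely those radii where $|A_0|$ is large---radii which are themselves available only along a sequence (Lemma [\ref{lowordB}]) or along ray-dependent sequences (Lemma [\ref{jlole}]). Nothing in the hypothesis forces these sequences to meet: for two entire functions whose characteristics oscillate out of phase, $\liminf_{r\to\infty}\log\max_j T(r,A_j)/\log r$ can strictly exceed every $\mu(A_j)$. So what you have written is a proof of the \emph{previous} corollary (where $\rho(A_j)<\mu(A_0)$ is retained), not of this one. Note that the paper's own proof of Theorem [\ref{exth1}] has the same shape---it chooses $\eta$ above $\max_j\rho(A_j)$---so a proof of the present corollary requires a genuinely new ingredient (some mechanism producing a common set of radii on which all the $A_j$ are small and $A_0$ is large), which your proposal does not supply.

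Two smaller points. First, you pass from the hypothesis $\mu(A_0)\neq\mu(A_i)$ to ``Since $\mu(A_0)\neq\rho(A_i)$, two cases arise'' without comment; this is correct but needs the observation that $\lambda(A_i)<\rho(A_i)<\infty$ forces $\mu(A_i)=\rho(A_i)$ (write $A_i=ve^{P}$ as in Lemma [\ref{implem}], so that $T(r,A_i)\ge T(r,e^{P})-T(r,v)$ with $\rho(v)<\deg P$), and it should be stated, since it is the only way the condition on $A_i$ enters. Second, your sector-intersection argument in the sub-case $\mu(A_0)\ge\tfrac12$ (the opening $\pi/\mu(A_0)>\pi/n$ forces the sector of Lemma [\ref{jlole}] to contain a subsector of some $E^-$ sector of $A_i$) is a correct filling-in of a step the paper leaves implicit in Case 2 of Theorem [\ref{lowthm}], and would be fine if the middle coefficients were under control.
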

\begin{thm}\label{exth2}
 Suppose that $A_1(z)$ be an entire function with $\lambda(A_1) < \rho(A_1)$ and $A_0(z)$ be an entire function extremal to Yang’s inequality such that no Borel direction of $A_0(z)$ coincides with any of the critical rays of $A_1(z)$ and $\rho(A_j)<\rho(A_0)$, where $j=2,\ldots, k-1$. Then all non-trivial solutions of the equation $(\ref{sde1})$  satisfies
$$\rho(f)=\infty \quad \mbox{and} \quad \rho_2(f)\geq \rho(A_0) .$$
\end{thm}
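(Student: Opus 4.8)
The plan is to mirror the proofs of Theorem~[\ref{bordi}] and Theorem~[\ref{bordihy}], with $A_1(z)$ playing the role of $A(z)$ and $A_0(z)$ that of $B(z)$, while verifying that the intermediate coefficients $A_2,\ldots,A_{k-1}$ never interfere. The heart of the matter is the case $\rho(A_1)=\rho(A_0)=n\in\mathbb{N}$, which I treat in detail; when the orders are unequal, $\rho(A_0)$ strictly dominates every $\rho(A_j)$ with $j\neq 0$, so $A_0$ is the strictly largest coefficient and both conclusions follow from the dominant-coefficient reasoning behind Theorem~[\ref{prethm}] and Theorem~[\ref{exth1}]. Assuming now $\rho(A_1)=\rho(A_0)=n$, write $A_1=v(z)e^{P(z)}$ with $\deg P=n$ and $\rho(v)=\lambda(A_1)<n$, so that Lemma~[\ref{implem}] applies and the $2n$ rays $\delta(P,\theta)=0$ cut the plane into sectors of width $\pi/n$ on which $\delta(P,\theta)$ alternates in sign. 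Suppose, for a contradiction, that some $f\not\equiv 0$ has finite order. By part (ii) of Lemma~[\ref{gunlem}] there is a set of finite logarithmic measure off which $|f^{(m)}(z)/f(z)|\leq r^{C}$ for $1\leq m\leq k$ with $C$ a constant, and for small $\epsilon>0$ the standard maximum-modulus bound $|A_j(z)|\leq\exp(r^{\beta'})$ holds in every direction, where $\beta'=\max_{2\leq j\leq k-1}\rho(A_j)+\epsilon<n$.

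I then split according to the Borel directions of $A_0$ relative to the critical-ray sectors of $A_1$, exactly as in Theorem~[\ref{bordi}]. In Case~1 a Borel direction $\Phi$ of $A_0$ lies in a sector where $\delta(P,\theta)<0$; shrinking to a sub-sector of opening less than $\pi/n$ about $\Phi$ and applying Lemma~[\ref{expincr}] yields a ray $\arg z=\psi_2$ in that sector with $\exp(r^{n-\epsilon})\leq|A_0(re^{\iota\psi_2})|$ along a sequence $r\to\infty$, while $\delta(P,\psi_2)<0$ forces $|A_1(re^{\iota\psi_2})|=o(1)$ by Lemma~[\ref{implem}]. Dividing (\ref{sde1}) by $f$ and isolating $A_0$ gives
$$\exp(r^{n-\epsilon})\leq|A_0|\leq\left|\frac{f^{(k)}}{f}\right|+\sum_{j=1}^{k-1}|A_j|\left|\frac{f^{(j)}}{f}\right|\leq r^{C}\exp(r^{\beta'})(1+o(1)),$$
a contradiction once $\beta'<n-\epsilon$. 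In Case~2 no Borel direction of $A_0$ lies in any sector with $\delta(P,\theta)<0$; since $\rho(A_1)=\rho(A_0)=n$, such a sector has the same width $\pi/n$ as the growth sectors of $A_0$ from Lemma~[\ref{expinc}], so it cannot sit inside a growth sector without coinciding with it, which the hypothesis on critical rays forbids, and hence it lies in a deficiency sector $\Omega_{2j-1}$ of Lemma~[\ref{defic}]. Choosing $\arg z=\theta$ with $\delta(P,\theta)>0$ inside $\Omega_{2j-1}$, dividing (\ref{sde1}) by $f'$, isolating $A_1$, and using $|f/f'|\leq r$ from Lemma~[\ref{kwonlem}] together with $|A_0|\leq|b_{2j-1}|+1$ from (\ref{defic}) gives
$$\exp\bigl((1-\epsilon)\delta(P,\theta)r^{n}\bigr)\leq|A_1|\leq r^{C}\exp(r^{\beta'})(1+|b_{2j-1}|+o(1)),$$
again a contradiction since $\beta'<n$. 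Hence $\rho(f)=\infty$.

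For the hyper-order estimate I rerun both cases, now exploiting that $f$ has infinite order so that part (iii) of Lemma~[\ref{gunlem}] furnishes $|f^{(k)}(z)/f^{(j)}(z)|\leq c\,r\,[T(2r,f)]^{2(k-j)}$ off a set of finite linear measure. Replacing the polynomial factors $r^{C}$ above by these characteristic-function bounds, Case~1 yields $\exp(r^{n-\epsilon})\leq c\,r\,[T(2r,f)]^{2k}\exp(r^{\beta'})(1+o(1))$ and Case~2 the analogous inequality with $\exp((1-\epsilon)\delta(P,\theta)r^{n})$ on the left; in either case, after absorbing $\exp(r^{\beta'})$ (with $\beta'<n-\epsilon$, respectively $\beta'<n$) into the dominant exponential, I obtain
$$\limsup_{r\rightarrow\infty}\frac{\log\log T(r,f)}{\log r}\geq n,$$
so that $\rho_2(f)\geq n=\rho(A_0)$.

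The only genuinely new ingredient beyond the second-order arguments, and the main bookkeeping obstacle, is the bank of intermediate coefficients $A_2,\ldots,A_{k-1}$: one must keep $\beta'=\max_{2\leq j\leq k-1}\rho(A_j)+\epsilon$ strictly below the operative threshold ($n-\epsilon$ in Case~1 and $n$ in Case~2) so that their $\exp(r^{\beta'})$ contributions are swallowed by the dominant exponential coming from $A_0$ or $A_1$. The accompanying delicate point is the equal-width sectorial matching in Case~2, which is precisely where the hypotheses $\rho(A_1)=\rho(A_0)$ and ``no Borel direction of $A_0$ lies on a critical ray of $A_1$'' are used; once these two points are secured, the rest is the estimation machinery of Lemmas~[\ref{gunlem}], [\ref{implem}], [\ref{kwonlem}], [\ref{expincr}], [\ref{expinc}] and [\ref{defic}].
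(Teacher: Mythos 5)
Your treatment of the main case $\rho(A_1)=\rho(A_0)=n$ is essentially the paper's own proof: the same two-case split according to whether some Borel direction of $A_0$ falls in a sector $(\theta_i,\phi_{i+1})$ where $\delta(P,\theta)<0$; the same use of Lemma~[\ref{expincr}] to produce a ray of maximal growth of $A_0$ along which $A_1$ decays (Case 1, isolating $A_0$); the same use of Lemma~[\ref{defic}] and Lemma~[\ref{kwonlem}] after dividing equation (\ref{sde1}) by $f'$ (Case 2, isolating $A_1$); the intermediate coefficients absorbed through $|A_j(z)|\leq\exp(r^{\eta})$ with $\max_{2\leq j\leq k-1}\rho(A_j)<\eta<n$; and the hyper-order bound obtained by rerunning both cases with the $T(2r,f)$ estimates of Lemma~[\ref{gunlem}] in place of the finite-order estimates. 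In fact your justification in Case 2 that $(\theta_i,\phi_{i+1})$ must be contained in a deficiency sector (equal widths $\pi/n$ plus the no-coincidence hypothesis rule out containment in a growth sector) is spelled out more carefully than in the paper, which simply asserts the containment.

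The genuine gap is in your dispatch of the unequal-orders case. You claim that if the orders are unequal then ``$\rho(A_0)$ strictly dominates every $\rho(A_j)$ with $j\neq 0$,'' but the hypotheses only give $\rho(A_j)<\rho(A_0)$ for $j=2,\ldots,k-1$; nothing prevents $\rho(A_1)>\rho(A_0)$, and in that sub-case $A_0$ is not the dominant coefficient, so your reduction to the dominant-coefficient reasoning of Theorems~[\ref{prethm}] and [\ref{exth1}] collapses. The sub-case is still true, but it needs the other half of the machinery: on rays with $\delta(P,\theta)>0$, $A_1$ is exponentially large by Lemma~[\ref{implem}], while every other coefficient (including $A_0$) is bounded by $\exp(r^{\beta})$ with $\beta<\rho(A_1)$; dividing (\ref{sde1}) by $f'$ and using Lemma~[\ref{kwonlem}] then contradicts the finite-order estimates, and the same inequality with the $T(2r,f)$ bounds gives $\rho_2(f)\geq\rho(A_1)>\rho(A_0)$, which is what the conclusion requires. (The paper sidesteps this entirely by citing \cite{dsm} for the whole case $\rho(A_0)\neq\rho(A_1)$.) With that sub-case repaired, your argument is complete and coincides with the paper's.
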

\begin{cor}
The conclusion of the above theorem also holds true if $\mu(A_j)<\rho(A_0)$.
\end{cor}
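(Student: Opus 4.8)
The plan is to treat this corollary as the lower-order refinement of Theorem [\ref{exth2}], which in turn is the higher-order analogue of Theorems [\ref{bordi}] and [\ref{bordihy}]. Accordingly I would reuse verbatim the sector analysis of those proofs and change only the way the intermediate coefficients $A_2(z),\ldots,A_{k-1}(z)$ are dominated. As in the proof of Theorem [\ref{bordi}], the case $\rho(A_0)\neq\rho(A_1)$ is absorbed by the higher-order infinite-order criterion of Theorem [\ref{exth1}], so I would first reduce to $\rho(A_0)=\rho(A_1)=n$ for some $n\in\mathbb{N}$, and assume for contradiction that a non-trivial solution $f$ has finite order.

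Next I would set up the two competing geometries. Since $\lambda(A_1)<\rho(A_1)$, write $A_1=v(z)e^{P(z)}$ with $\deg P=n$, obtaining the critical rays and the sets $E^{\pm}$ as in Lemma [\ref{implem}]. Since $A_0$ is extremal to Yang's inequality, its plane splits into alternating sectors $\Omega_i(\Phi_i,\Phi_{i+1})$ on which either the deficiency estimate (\ref{defic}) holds or a ray of maximal growth (\ref{expinc}) exists, exactly as recorded before Lemma [\ref{defic}]. I would then run the same dichotomy used for Theorem [\ref{bordi}]: in Case 1 a Borel direction of $A_0$ lies in an interval $(\theta_i,\phi_{i+1})\subset E^{-}$, so Lemma [\ref{expincr}] produces a ray $\arg z=\psi_2$ on which $|A_0(re^{\iota\psi_2})|\geq\exp(r^{n-\epsilon})$ while $|A_1|\to 0$; in Case 2 no such Borel direction exists, the interval sits inside a deficiency sector, and one works on $E^{+}$ where $|A_1(re^{\iota\theta})|\geq\exp((1-\epsilon)\delta(P,\theta)r^n)$ while $A_0$ stays near a finite deficient value. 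Dividing (\ref{sde1}) by $f$ (Case 1) or by $f'$ (Case 2) and estimating the logarithmic derivatives $f^{(k)}/f$, $f^{(j)}/f'$ by Lemma [\ref{gunlem}] together with $|f/f'|\leq r$ from Lemma [\ref{kwonlem}] reduces everything, as before, to showing that the intermediate block $\sum_{2\leq j\leq k-1}|A_j|\,|f^{(j)}/f|$ is negligible against the dominant exponential.

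The one genuinely new point, and the step I expect to be the main obstacle, is exactly this domination of the middle coefficients: the hypothesis now only gives $\mu(A_j)<n$, so the clean global bound $|A_j(z)|\leq\exp(r^{\rho(A_j)+\epsilon})$ valid for all large $r$ (which is what drives Theorem [\ref{exth2}]) is no longer available. By the definition of the lower order each $A_j$ satisfies $M(r,A_j)\leq\exp(r^{\mu(A_j)+\epsilon})$ only along a sequence of radii that a priori depends on $j$, whereas the controlling coefficient ($A_0$ in Case 1, $A_1$ in Case 2) exhibits its growth along its own radii. The plan is to reconcile these: set $\mu^{*}=\max_{2\leq j\leq k-1}\mu(A_j)<n$, let $S$ be the unbounded set of radii realising the limsup growth of the dominant coefficient, and show that $S$ meets the common good set $\bigcap_j\{r:M(r,A_j)\leq\exp(r^{\mu^{*}+\epsilon})\}$ along a sequence $r_m\to\infty$. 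In Case 2 this is immediate because $A_1$'s bound holds for \emph{all} large $r$, so one simply selects $r_m$ from the good set; Case 1 is the delicate one, and here I would exploit that, for a function extremal to Yang's inequality, the maximal growth near a Borel direction occurs on a radius set of positive upper logarithmic density, which therefore cannot be swallowed by the finitely many lower-order exceptional sets. On such $r_m$ the intermediate block is at most $\exp(r_m^{\mu^{*}+\epsilon})$ times a power of $r_m$, which is $o\!\left(\exp(r_m^{n-\epsilon})\right)$, delivering the same contradiction as in Theorem [\ref{exth2}] and hence $\rho(f)=\infty$. Finally, for the hyper-order statement I would drop the finite-order assumption and rerun the identical estimates with the polynomial bounds on the logarithmic derivatives replaced by the infinite-order bounds $|f^{(k)}/f^{(j)}|\leq cr\,[T(2r,f)]^{2(k-j)}$ from part (iii) of Lemma [\ref{gunlem}]; the same two inequalities then yield $\limsup_{r\to\infty}\log\log T(r,f)/\log r\geq n$, i.e. $\rho_2(f)\geq\rho(A_0)$, as claimed.
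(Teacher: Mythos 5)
The paper never actually writes a proof of this corollary: it is stated bare after Theorem \ref{exth2}, the evidently intended reading being ``repeat the proof of Theorem \ref{exth2} with $\rho(A_j)$ replaced by $\mu(A_j)$ in the choice of $\eta$.'' Your proposal reproduces exactly that skeleton (reduction to $\rho(A_0)=\rho(A_1)=n$, the Yang-extremality dichotomy between a Borel direction inside $(\theta_i,\phi_{i+1})$ and a deficiency sector, Lemmas \ref{gunlem}, \ref{kwonlem}, \ref{implem}, \ref{defic}, \ref{expincr}), and you correctly isolate the single genuinely new point: under $\mu(A_j)<\rho(A_0)$ the bound $|A_j(z)|\leq\exp\left(r^{\eta}\right)$ with $\mu(A_j)<\eta<n$ is available only along a sequence of radii which a priori depends on $j$, not for all large $r$. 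That diagnosis is sharper than anything the paper says.

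However, your resolution of that point does not close the gap; it rests on two claims that are never proved. First, you assume the common good set $\bigcap_{j}\{r: M(r,A_j)\leq\exp(r^{\mu^{*}+\epsilon})\}$ is unbounded. For a single middle coefficient this is just the definition of lower order (and it can be reconciled with the finite-measure exceptional sets of Lemma \ref{gunlem} by monotonicity of $M(r,A_j)$ on intervals $[r_m/2,r_m]$), but for $k-2\geq 2$ middle coefficients nothing in the hypotheses prevents the individual good sets from being disjoint: the liminf of a maximum can strictly exceed the maximum of the liminfs, so $\mu(A_j)<n$ for each $j$ separately yields no common sequence. Second, in your Case 1 you invoke that a function extremal to Yang's inequality attains its maximal growth near a Borel direction on a set of radii of \emph{positive upper logarithmic density}. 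This is not Lemma \ref{expincr}, which produces only a limsup along a boundary ray, i.e.\ an unbounded sequence with no density information; it appears nowhere in the paper, and you do not prove it. Without it, even in the case of a single middle coefficient, the sequence realising $|A_0(re^{\iota\psi_2})|\geq\exp\left(r^{n-\epsilon}\right)$ and the sequence on which $|A_2|$ is controlled need not intersect, so the contradiction in Case 1 never materialises. A smaller slip: your opening reduction of the case $\rho(A_0)\neq\rho(A_1)$ to Theorem \ref{exth1} is not legitimate, since Theorem \ref{exth1} requires $\rho(A_j)<\mu(A_0)$ together with a hypothesis on $\mu(A_0)$, neither of which follows from the corollary's assumptions; the paper's own reduction in Theorem \ref{exth2} is to the result of \cite{dsm} instead.
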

\begin{thm}\label{exth3}
Suppose there exist $A_i(z)$ such that $\lambda(A_i)<\rho(A_i)$ and $A_0(z)$ be an entire function extremal to Denjoy's conjecture and  $\rho(A_j)<\rho(A_0), j=1,2,\ldots, k-1, j\neq i$. Then all non-trivial solutions $f$ of the equation $(\ref{sde1})$ satisfies 
$$\rho(f)=\infty \quad \mbox{and} \quad \rho_2(f)\geq \rho(A_0) .$$
\end{thm}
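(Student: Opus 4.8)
The plan is to adapt the sectorial contradiction argument used for Theorem \ref{denth1} and Theorem \ref{denth2} to the higher order equation (\ref{sde1}) by isolating the coefficient $A_0(z)$. Writing the equation as
$$A_0(z)=-\frac{f^{(k)}}{f}-\sum_{j=1}^{k-1}A_j(z)\frac{f^{(j)}}{f},$$
the idea is to locate a direction, or a thin sector, along which $A_0(z)$ grows like $\exp\{r^{\rho(A_0)-\epsilon}\}$ while every other term on the right is comparatively negligible. To set this up I would fix the dominant exponential structure of $A_i$: since $\lambda(A_i)<\rho(A_i)=n_i$, write $A_i(z)=v(z)e^{P(z)}$ with $\deg P=n_i$ and $\rho(v)=\lambda(A_i)$, and introduce the critical rays together with $E^+$, $E^-$ and $\delta(P,\theta)$. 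By Lemma \ref{implem}(ii), on the interior of each sector $(\theta_i,\phi_{i+1})\subset E^-$ one has $|A_i(re^{\iota\theta})|\leq\exp\{(1-\epsilon)\delta(P,\theta)r^{n_i}\}\to 0$, so $A_i$ decays there. Moreover, since $\rho(A_j)<\rho(A_0)=n_0$ for every $j\neq i$ with $1\leq j\leq k-1$, setting $\rho_{\max}=\max_{j\neq i}\rho(A_j)<n_0$ gives $|A_j(re^{\iota\theta})|\leq\exp\{r^{\rho_{\max}+\epsilon}\}$ for large $r$ and all $\theta$. Thus inside an $E^-$ sector of $A_i$ every coefficient except $A_0$ is controlled.

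Next I would produce exponential growth of $A_0$ inside such a sector, exactly as in Theorem \ref{denth1}. Fix a critical sector $(\theta_i,\phi_{i+1})$ of $A_i$ and split into two cases. If $A_0$ has a Borel direction $\Phi$ with $\theta_i<\Phi<\phi_{i+1}$, then choosing $\psi_1<\Phi<\psi_2$ inside the sector and applying Lemma \ref{expincr} yields a ray $\arg z=\psi_2$ on which $\limsup_{r\to\infty}\log\log|A_0(re^{\iota\psi_2})|/\log r=n_0$, hence $|A_0|\geq\exp\{r^{n_0-\epsilon}\}$ along a sequence $r\to\infty$. If no Borel direction of $A_0$ lies in $(\theta_i,\phi_{i+1})$, then picking $\theta$ in that sector and invoking Lemma \ref{denlem} gives a subsector $\Omega(\theta-\sigma,\theta+\sigma)$ on which $\log\log|A_0(z)|/\log|z|\to n_0$ off an exceptional set $E_5$ of vanishing measure, so $|A_0(z)|\geq\exp\{r^{n_0-\epsilon}\}$ for $z\in(\Omega(\theta-\sigma,\theta+\sigma)\setminus E_5)\cap(\theta_i,\phi_{i+1})$ of large modulus.

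Finally I would assemble the contradiction. Assuming a finite order solution $f$, Lemma \ref{gunlem}(i) gives $|f^{(j)}(z)/f(z)|\leq|z|^{k(\rho(f)-1+\epsilon)}$ for $1\leq j\leq k$ off a set of directions of measure zero; combining this with the three bounds above on the selected ray or subsector (also avoiding the exceptional sets) yields
$$\exp\{r^{n_0-\epsilon}\}\leq|A_0(z)|\leq|z|^{k(\rho(f)-1+\epsilon)}\bigl(1+\exp\{r^{\rho_{\max}+\epsilon}\}\bigr),$$
which is impossible for small $\epsilon$ and large $r$ because $\rho_{\max}<n_0$; hence $\rho(f)=\infty$. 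For the hyper order statement, once $\rho(f)=\infty$ is known I would repeat the argument with Lemma \ref{gunlem}(iii), replacing the polynomial factor by $cr[T(2r,f)]^{2k}$, to obtain $\exp\{(1-o(1))r^{n_0-\epsilon}\}\leq cr[T(2r,f)]^{2k}$ and therefore $\rho_2(f)\geq n_0=\rho(A_0)$.

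The main obstacle is the simultaneity requirement: one must find a single direction, or thin sector, on which $A_0(z)$ is exponentially large while $A_i(z)$ is small. This is precisely where the extremality of $A_0$ for Denjoy's conjecture is essential, since Lemma \ref{denlem} guarantees exponential growth of $A_0$ in a neighbourhood of almost every direction, in particular inside the $E^-$ sectors of $A_i$ where $A_i$ is forced to decay. The remaining care is the bookkeeping of the intermediate coefficients $A_j$ with $j\neq i$, which is handled uniformly by the domination $\rho_{\max}<\rho(A_0)$.
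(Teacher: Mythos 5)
Your proposal is correct and follows essentially the same route as the paper's proof: the same two-case split according to whether $A_0$ has a Borel direction inside a sector where $A_i=v(z)e^{P(z)}$ decays, with Lemma \ref{expincr} supplying the growth ray in the first case and Lemma \ref{denlem} supplying the growth subsector in the second, the intermediate coefficients absorbed via $\max_{j\neq i}\rho(A_j)<\rho(A_0)$, and Gundersen's estimates from Lemma \ref{gunlem} yielding first the finite-order contradiction and then $\rho_2(f)\geq\rho(A_0)$. The only cosmetic differences are that the paper first disposes of the case $\rho(A_i)\neq\rho(A_0)$ by citing earlier work while you treat all cases uniformly, and that you use the ray-wise estimate of Lemma \ref{gunlem}(i) for the finite-order contradiction, which if anything handles the exceptional sets more cleanly than the paper's mixture of parts (i) and (ii).
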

\begin{cor}
The conclusion of the above theorem holds true also if $\mu(A_j)<\rho(A_0), j=1,2,\ldots, k-1, j\neq i$.
\end{cor}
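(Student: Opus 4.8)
The plan is to run the argument of Theorem [\ref{exth3}] essentially unchanged and to isolate the single place where the hypothesis on the middle coefficients is actually used. Exactly as in that theorem (and in its second order prototype Theorem [\ref{denth1}]), I would first reduce to $\rho(A_i)=\rho(A_0)=n$ for some $n\in\mathbb{N}$, the case $\rho(A_i)\neq\rho(A_0)$ being already covered, and assume for contradiction that $(\ref{sde1})$ has a non-trivial solution $f$ of finite order. Writing $A_i=v e^{P}$ with $\deg P=n$ and $\rho(v)=\lambda(A_i)<\rho(A_i)$, Lemma [\ref{implem}] makes $A_i$ subordinate, $|A_i(re^{\iota\theta})|\leq\exp((1-\epsilon)\delta(P,\theta)r^{n})$, on every ray of a sector where $\delta(P,\theta)<0$, while Lemma [\ref{gunlem}] bounds every logarithmic-derivative quotient $|f^{(j)}/f|\leq r^{M}$ (with $\Gamma=\{(1,0),\dots,(k,0)\}$ and a fixed $M$) outside a small exceptional set. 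The largeness of $A_0$ is supplied precisely as in Theorem [\ref{exth3}]: splitting on whether a Borel direction of $A_0$ lies in the $\delta(P,\cdot)<0$ sector, one produces a ray $\arg z=\psi$ in that sector on which $|A_0(re^{\iota\psi})|\geq\exp(r^{n-\epsilon})$, using Lemma [\ref{expincr}] in the first case and Lemma [\ref{denlem}] in the second.

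Substituting these estimates into $(\ref{sde1})$ divided by $f$ gives, on the chosen ray and off the exceptional sets,
\begin{equation*}
\exp(r^{n-\epsilon})\leq |A_0(re^{\iota\psi})|\leq r^{M}\Bigl(1+\sum_{j=1}^{k-1}|A_j(re^{\iota\psi})|\Bigr).
\end{equation*}
Since $|A_i(re^{\iota\psi})|$ is bounded on this ray, a contradiction follows the moment the remaining coefficients satisfy $|A_j(re^{\iota\psi})|\leq\exp(r^{\sigma})$ with $\sigma<n$ for all $j\neq i$. In Theorem [\ref{exth3}] this is free from $\rho(A_j)<\rho(A_0)=n$, holding for every large $r$. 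The entire content of the corollary is therefore to recover this bound for all middle coefficients simultaneously, not for all $r$ but merely along some sequence $r_m\to\infty$ lying in the largeness set of $A_0$; a single such sequence already contradicts $\rho(f)<\infty$. The assertion $\rho_2(f)\geq\rho(A_0)$ is obtained by the identical computation after $f$ becomes of infinite order, replacing $r^{M}$ by $c\,r[T(2r,f)]^{M}$ via Lemma [\ref{gunlem}], which yields $\limsup_{r\to\infty}\frac{\log\log T(r,f)}{\log r}\geq n$ along $r_m$.

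The main obstacle, and the only genuinely new work, is the construction of this common sequence from the lower-order hypothesis $\mu(A_j)<\rho(A_0)$. Fix $\sigma$ with $\max_{j\neq i}\mu(A_j)<\sigma<\rho(A_0)=n$. For a single coefficient I would exploit that $t\mapsto\log M(e^{t},A_j)$ is convex and increasing: at any point $t_m$ of the lower-order sequence one has $\log M(e^{t_m},A_j)\leq e^{(\mu(A_j)+\epsilon)t_m}$, and convexity propagates this bound over the whole interval $t\in[\frac{\mu(A_j)+\epsilon}{\sigma}t_m,\;t_m]$, so that $F_j=\{r:\log M(r,A_j)\leq r^{\sigma}\}$ contains arbitrarily long logarithmic intervals and has $\overline{\log dens}(F_j)\geq 1-\mu(A_j)/\sigma>0$. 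The delicate point is that these finitely many sets must be met at once, each possibly having logarithmic \emph{lower} density zero, and they must in addition meet the radii where $A_0$ is large.

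I would handle the alignment with $A_0$ first: the largeness set coming from Lemma [\ref{denlem}] is co-sparse, since its exceptional set $E_5$ has area tending to $0$ outside $|z|=r$, so by a Fubini argument the radii on which no admissible angle in the sector is available carry logarithmic density zero, and intersecting with this set costs no density in any $F_j$. The crux is then the simultaneous intersection $\bigcap_{j\neq i}F_j$: here I would take $\sigma$ as close to $n$ as permitted, forcing every $F_j$ to contain the long logarithmic intervals $[r_m^{\,\mu(A_j)/\sigma+o(1)},r_m]$, and attempt to extract from these nested interval families a common sequence $r_m\to\infty$. I expect this extraction — certifying that finitely many such families overlap at arbitrarily large scales, rather than interleaving so as to cover the axis with their complements — to be the hard step, and it is the part of the argument that must be carried out with care; the rest is a transcription of the proof of Theorem [\ref{exth3}] with "for all large $r$" weakened to "along $r_m$".
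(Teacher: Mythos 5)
Your transcription of the proof of Theorem [\ref{exth3}] is faithful, and your preparatory steps are sound: the paper itself offers no separate argument for this corollary (it is tacitly asserted that the theorem's proof goes through with $|A_j(z)|\leq \exp(r^{\eta})$, $\eta<\rho(A_0)$, now available only along a sequence of radii rather than for all large $r$), and your single-coefficient density computation is correct — monotonicity of $\log M(r,A_j)$ propagates the lower-order bound at $r_m$ over the whole logarithmic interval $[r_m^{(\mu(A_j)+\epsilon)/\sigma}, r_m]$, so $F_j=\{r:\log M(r,A_j)\leq r^{\sigma}\}$ has $\overline{\log dens}(F_j)\geq 1-\mu(A_j)/\sigma$. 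Your Fubini-type alignment with the largeness set from Lemma [\ref{denlem}] is also plausible.

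However, the step you defer — extracting a common sequence from the sets $F_j$ — is a genuine gap, and it does not merely require care: it fails as posed. Finitely many sets, each of upper logarithmic density arbitrarily close to $1$, can be pairwise disjoint (interleave long logarithmic blocks, assigning them alternately to the complements), and since $\log M(e^{t},A_j)$ is only convex and increasing, entire functions of small lower order can genuinely oscillate across $r^{\sigma}$ in exactly this interleaved fashion: a convex $\phi$ may stay nearly linear through a long block (falling below the exponential $e^{\sigma t}$, which certifies $\mu(A_j)<\sigma$) and then climb above it again, with different coefficients doing so on complementary blocks, leaving $\bigcap_{j\neq i}F_j$ bounded. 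So no extraction is possible from upper densities alone; what the theorem's hypothesis $\rho(A_j)<\rho(A_0)$ actually buys is that each bad set $\{r:\log M(r,A_j)>r^{\sigma}\}$ is empty for large $r$, and $\mu(A_j)<\rho(A_0)$ gives no control on its upper density. A second, unaddressed instance of the same problem occurs in your Case 1: the largeness of $A_0$ supplied by Lemma [\ref{expincr}] is itself only a $\limsup$ along a sequence determined by $A_0$, so even for a single middle coefficient the alignment there is not covered by your $E_5$ argument, which applies only to the Lemma [\ref{denlem}] case. In short, your proposal correctly isolates the decisive difficulty but leaves it unresolved — and, it should be said, the paper's implicit justification glosses over exactly the same alignment issue, so closing the gap would require an idea present in neither argument.
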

\section{Proof of the Theorem [\ref{exth1}]}
\begin{proof}
Assume that there exists a non-trivial solution $f$ of the equation (\ref{sde1}) with finite order then by part (\ref{gunlem3}) of Lemma [\ref{gunlem}], for given $\epsilon>0$ there exists a set $E_3\subset [0,\infty)$ that has finite linear measure such that
\begin{equation}\label{exeq1}
\left| \frac{f^{(m)}(z)}{f(z)}\right| \leq |z|^{k(\rho(f)+\epsilon)}, \quad m=1,2,3,\ldots, k
\end{equation}
for all $z$ satisfying $|z|\not \in E_3$.\\

We suppose the case when $\rho(A_i)<\mu(A_0)$ then from equations (\ref{sde1}) and (\ref{exeq1}) we get 
\begin{align*}
\left|A_0(z)\right|&\leq \left| \frac{f^{(k)}(z)}{f(z)}\right| +|A_{k-1}(z)|\left|\frac{f^{(k-1)}(z)}{f(z)}\right|+\ldots+|A_1(z)|\left|\frac{f'(z)}{f(z)}\right|\\
&\leq |z|^{k(\rho(f)+\epsilon)}\left[ 1+|A_{k-1}(z)|+\ldots+|A_1(z)|\right]
\end{align*}
for all $z$ satisfying $|z|\not \in E_3$. Which implies that 
$$T(r,A_0)\leq k(\rho(f)+\epsilon)\log{r}+ (k-1)T(r,A_m)+O(1)$$
where $T(r,A_m)=\max\{\ T(r,A_p): p=1,2,\ldots, k-1 \}\ $ and $|z|=r\notin E_3$.This gives us that $\mu(A_0)\leq \mu(A_m), m=1,2, \ldots, k-1$ which is a contradiction. Thus all non-trivial solutions of the equation (\ref{sde1}) are of infinite order. \\
Suppose that $f$ be a nontrial solution of the equation (\ref{sde1}) then by part (ii) of Lemma [\ref{gunlem}], for $\epsilon >0$ there exists a set $E_2\subset (1,\infty)$ that has finite logarithmic measure and there exists a constant $c>0$  such taht for all $z$ satisfying $|z|=r\notin E_2\cup[0,1]$ we have
\begin{equation} \label{exeqf}
\left| \frac{f^{(m)}(z)}{f(z)}\right|\leq c \left( T(2 r,f)\right)^{2k}\quad m=1,2,\ldots, k
\end{equation}
Choose $\max\{\ \rho(A_p): p=1,2,\ldots k-1\}\ <\eta< \mu(A_0) $ then from equations (\ref{sde1}) and (\ref{exeqf}) we get
\begin{align*}
\exp{(\mu(A_0-\epsilon))}&\leq |A_0(z)|\leq \left|\frac{f^{(k)}(z)}{f(z)}\right|+|A_{(k-1)}|\left|\frac{f^{(k-1)}}{f(z)}\right|+\\
&\ldots+|A_1(z)|\left|\frac{f'(z)}{f(z)}\right|\\
&\leq cT(2r,f)^{2k}[1+(k-1)\exp{(r^{\eta})}]
\end{align*}
for all $z$ satisfying $|z|=r\notin E_2$. This will implies that 
$$ \limsup_{r\rightarrow \infty}\frac{\log \log T(r,f)}{\log r}\geq \mu(A_0).$$

Now we consider the case when $\mu(A_0)<\rho(A_i)=n$, where $n\in\mathbb{N}$ and there is a non-trivial solution $f$ of the equation (\ref{sde1}) of finite order then by part (i) of Lemma [\ref{gunlem}], for given $\epsilon>0$ there exists a set $E_1 \subset[0,2\pi)$ that has linear measure zero such that if $\psi_0 \in [0,2\pi)\setminus E_1, $ then there is a constant $R_0=R(\psi_0)>0$  so that for all $z$ satisfying $\arg z =\psi_0$ and $|z| \geq R_0$ we have
\begin{equation} \label{exeq2}
\left|\frac{f^{(m)}(z)}{f(z)}\right| \leq |z|^{k\rho(f)} m=1,2,\ldots ,k
\end{equation}
Also $\rho(A_j)<\mu(A_0)$ for $j=1,2,\ldots,k-1, j\neq i$ then we can choose $\eta>0$ such that 
$$ \max\{\ \rho(A_j), j=1,2,\ldots.k-1, j\neq i\}\ <\eta<\mu(A_0).$$
From above we have that 
\begin{equation}\label{exeq3}
|A_j(z)|\leq \exp{r^{\eta}}, \quad j=1,2,\ldots, k-1, j\neq i
\end{equation}
We have following cases to discuss:
\\
{\bf{Case 1.}} when $0<\mu(A_0)<\frac{1}{2}$ then Lemma [\ref{lowordB}], equations  (\ref{eq2le}), (\ref{sde1}), (\ref{exeq2}) and (\ref{exeq3})  gives
\begin{align*}
\exp{r^{\mu(A_0)-\epsilon}}&\leq  |z|^{k\rho(f)}\left[ 1+\exp{\left((1-\epsilon)\delta(P, \theta)r^n\right)}+(k-2)\exp{r^{\eta}}\right]
\end{align*}
for all $z$ satisfying $|z|=r>R$ and $\arg{z}\in E^-\setminus(E_1\cup E)$. This gives a contradiction for sufficiently large $r$.
\\

{\bf{Case 2.}} Assume that $\mu(A_0)\geq \frac{1}{2}$ then by Lemma [\ref{jlole}], equations (\ref{eq2le}),  (\ref{sde1}),  (\ref{exeq2}) and (\ref{exeq3}) we get a contradiction. 
\\

{\bf{Case 3.}} Suppose that $\mu(A_0)=0$ then using Lemma [\ref{loword}], equations (\ref{eq2le}), (\ref{sde1}),   (\ref{exeq2}) and (\ref{exeq3}) we again get a contradiction. \\

Therefore all non-trivial solutions of the equation (\ref{sde1}) are of infintie order.

If $\mu(A_0)=0$ then $\rho_2(f)\geq 0$ for all non-trivial solutions $f$ of the equation (\ref{sde1}). Therefore we suppose that $\mu(A_0)>0$ then using Lemma [\ref{lowordB}] (or Lemma [\ref{jlole}] ),  equations  (\ref{eq2le}), (\ref{sde1}),  (\ref{exeqf}) and (\ref{exeq3}) we get 
 $$\rho_2(f)\geq\mu(A_0)$$
for all non-trivial solutions $f$ of the equation (\ref{sde1}).
\end{proof}
\section{Proof of the Theorem [\ref{exth2}]}
\begin{proof}
When $\rho(A_0)\neq \rho(A_1)$ then the result follows from \cite{dsm}. Thus we need to consider $\rho(A_0)=\rho(A_1)=n$ where $n\in \mathbb{N}$. Let us suppose that there exists a non-trivial solution $f$ of the equation (\ref{sde1}) of finite order. Then from part (ii) of Lemma [\ref{gunlem}], for $\epsilon>0$ there exists a set $E_2 \subset(1,\infty)$ with finite logarithmic measure such that
\begin{equation} \label{exeq4}
\left|\frac{f^{(m)}(z)}{f^{(p)}(z)}\right| \leq |z|^{(m-p)\rho(f)}, \quad m, p=0, 1,2,\ldots, k, p<m
\end{equation} 
for all $z$ satisfying $|z|=r\notin E_2\cup[0,1]$.\\
Since $\rho(A_j)<\rho(A_0)$ for $j=2,3,\ldots, k-1$ then we choose $\eta>0$ such that 
$$ \max\{\ \rho(A_j):j=2,3, \ldots,k-1\}\ <\eta<\rho(A_0)$$
so that 
\begin{equation}\label{exeq5}
|A_j(z)|\leq \exp{r^{\eta}}
\end{equation}
 for $j=2,3,\ldots, k-1.$ As done earlier in Theorem [\ref{bordi}], we have following two cases to discuss: 
\\
{\bf{Case 1.}} if there exists a Borel direction $\Phi$ of $A_0(z) $ such that $\theta_i<\Phi<\phi_{i+1}$ for $i=1,2,\ldots,n$ then from equations  (\ref{eq2le}), (\ref{exp}), (\ref{sde1}),   (\ref{exeq4}) and (\ref{exeq5}) we have
\begin{align*}
\exp^{(n-\epsilon)}&\leq |A_0(z)|\leq \left|\frac{f^{(k)}(z)}{f(z)}\right|+|A_{(k-1)}(z)|\left|\frac{f^{(k-1)}(z)}{f(z)}\right|+\\
&\ldots+|A_1(z)|\left|\frac{f'(z)}{f(z)}\right|\\
&\leq |z|^{k\rho(f)}[1+(k-2)\exp{(r^{\eta})}+\exp{(1-\epsilon)\delta(P,\psi_2)}]
\end{align*}
for all $z$ satisfying $|z|=r\notin E_2\cup[0,1]$ and $\arg{z}=\psi_2$. This we lead us to a contradiction for large values of $r$. Thus all non-trivial solutions of the equation (\ref{sde1}) are of infinite order.
 From equations (\ref{eq2le}), (\ref{exp}), (\ref{sde1}),   (\ref{exeqf}) and (\ref{exeq5}) we have $\rho_2(f)\geq\rho(A_0)$ for all non-trivial solutions $f$ of the equation (\ref{sde1}).\\
{\bf{Case 2.}} If there does not exists any Borel direction of $A_0(z)$ contained in $(\theta_i,\phi_{i+1})$ for $i=1,2,\ldots, n$ then from equations (\ref{eqA1}), (\ref{kwoneq}), (\ref{eqBde}), (\ref{sde1}), (\ref{exeq4}) and (\ref{exeq5}) we have
\begin{align*}
\exp{((1-\epsilon)\delta(P,\theta)r^n)}&\leq |A_1(z)|\leq \left|\frac{f^{(k)}(z)}{f'(z)}\right|+|A_1(z)|\left|\frac{f^{(k-1)}(z)}{f'(z)}\right|+\\
&\ldots+|A_0(z)|\left|\frac{f(z)}{f'(z)}\right|\\
&\leq |z|^{k\rho(f)}[1+(k-2)\exp(r^{\eta})\\
&+r(\exp{\left(-CT(r,A_0)\right)}+|a_{2j-1}|)]
\end{align*}
for all $|z|=r\notin E_2\cup[0,1]$ and $\arg{z}=\theta \in E^+\cap \Omega_{2j-1}(\Phi_{2j-1}$, where $a_i, i=1,2,\ldots \frac{q}{2}$ are deficient values of $A_0(z)$. Which will provide a contradiction for sufficiently large $r$.\\
Thus all non-trivial solutions of the equation (\ref{sde1}) are of infinite order.
From equations  (\ref{eqA1}), (\ref{kwoneq}), (\ref{eqBde}), (\ref{sde1}),  (\ref{exeqf}) and (\ref{exeq5}) we have $\rho_2(f)\geq\rho(A_0)$ for all non-trivial solutions $f$ of the equation (\ref{sde1}).
\end{proof}
\section{Proof of the Theorem [\ref{exth3}]}
\begin{proof}
If $\rho(A_i)\neq\rho(A_0)$ then result is true from \cite{dsm}. Assume that $\rho(A_i)=\rho(A_0)=n, n\in \mathbb{N}$ and there exists a non-trivial solution $f$ of the equation (\ref{sde1}) of finite order. Then we have following two cases to discuss:
\\
{\bf{Case 1.}} when the ray $\arg{z}=\Phi$ is a Borel direction of $A_0(z)$ where $ \Phi\in(\theta_i,\phi_{i+1})$ for some $i=1,2,\ldots, n$. Choose $\psi_1<\psi_2$ such that $\theta_i<\psi_1<\phi<\psi_2<\phi_{i+1}$ and $\psi_2-\psi_1<\frac{\pi}{\rho(A_i)}=\frac{\pi}{\rho(A_0)}$. Then by Lemma [\ref{expincr}] we can have
\begin{equation}\label{exeq6}
\limsup_{r\rightarrow \infty}\frac{\log{\log{|A_0(re^{\iota \psi_2})|}}}{\log{r}}=\rho(A_0)
\end{equation}
Thus from equations  (\ref{eq2le}), (\ref{sde1}),  (\ref{exeq4}), (\ref{exeq5}) and  (\ref{exeq6}) we get contradiction for sufficiently large $r$.\\
As done in Case 1 of Theorem [\ref{exth2}] we get $\rho_2(f)\geq\rho(A_0)$ for all non-trivail solutions $f$ of the equation  (\ref{sde1}).\\
{\bf{Case 2.}} Suppose that $\arg{z}=\theta$ is not a Borel direction of $A_0(z)$ for any $\theta\in(\theta_i,\phi_{i+1})$ for all $i=1,2,\ldots, n$ then choose $\arg{z}=\theta \in (\theta_i,\phi_{i+1})$ for some $i=1,2,\ldots, n$.  Then Lemma [\ref{denlem}], equations (\ref{guneq1}), (\ref{eq2le}), (\ref{sde1}),  and (\ref{exeq3}) leads us to a contradiction for sufficiently large $r$. \\
Thus all non-trivial solutions of the eqaution  (\ref{sde1}) are of infinite order. Also,
Lemma [\ref{denlem}], equations  (\ref{ggguneq}),  (\ref{eq2le}), (\ref{sde1}), and (\ref{exeq3}) gives that $\rho_2(f)\geq\rho(A_0)$ for all non-trivail solutions $f$ of the equation  (\ref{sde1}).
\end{proof}
{\bf{Acknowledgement:}}  I am thankful to my thesis advisor for his valuable comments and suggestions.
I am also thankful to the Department of Mathematics, Deen Dayal Upadhyaya College (University of Delhi), for providing
the proper research facilities.

\end{document}